\definecolor{cblack}{rgb}{0,0,0}
\definecolor{cblue}{rgb}{0.121569,0.466667,0.705882}    % 31,  119, 180
\definecolor{corange}{rgb}{1.000000,0.498039,0.054902}  % 256, 127, 14
\definecolor{cgreen}{rgb}{0.172549,0.627451,0.172549}   % 44,  160, 44
\definecolor{cred}{rgb}{0.839216,0.152941,0.156863}     % 214, 39,  40
\definecolor{cpurple}{rgb}{0.580392,0.403922,0.741176}  % 149, 103, 190
\definecolor{cbrown}{rgb}{0.549020,0.337255,0.294118}   % 141, 86,  75
\definecolor{cpink}{rgb}{0.890196,0.466667,0.760784}
\definecolor{cgray}{rgb}{0.498039,0.498039,0.498039}
\definecolor{cgreen2}{rgb}{0.7372549019607844, 0.7411764705882353, 0.13333333333333333}
\newtheorem{theorem}{Theorem}[section]
\newtheorem{remark}[theorem]{Remark}
\newtheorem{assumption}[theorem]{Assumption}
\newtheorem{lemma}[theorem]{Lemma}
\newtheorem{definition}[theorem]{Definition}
\newtheorem{example}[theorem]{Example}
\newtheorem{proposition}[theorem]{Proposition}
\newtheorem{corollary}[theorem]{Corollary}
\theoremstyle{plain} % just in case the style had changed
\newcommand{\thistheoremname}{}
\newtheorem*{genericthm}{\thistheoremname}
\def\moverlay{\mathpalette\mov@rlay}
\def\mov@rlay#1#2{\leavevmode\vtop{%
   \baselineskip\z@skip \lineskiplimit-\maxdimen
   \ialign{\hfil$\m@th#1##$\hfil\cr#2\crcr}}}
\newcommand{\charfusion}[3][\mathord]{
    #1{\ifx#1\mathop\vphantom{#2}\fi
        \mathpalette\mov@rlay{#2\cr#3}
      }
    \ifx#1\mathop\expandafter\displaylimits\fi}
\newcommand{\CC}{\mathbb{C}}
\newcommand{\EE}{\mathbb{E}}
\newcommand{\NN}{\mathbb{N}}
\newcommand{\PP}{\mathbb{P}}
\newcommand{\QQ}{\mathbb{Q}}
\newcommand{\RR}{\mathbb{R}}
\newcommand{\ZZ}{\mathbb{Z}}
\DeclareSymbolFont{bbold}{U}{bbold}{m}{n}
\DeclareSymbolFontAlphabet{\mathbbold}{bbold}
\newcommand{\One}{\mathbbold{1}}
\newcommand{\ba}{\bm a}
\newcommand{\bb}{\bm b}
\newcommand{\be}{\bm e}
\newcommand{\bv}{\bm v}
\newcommand{\bx}{\bm x}
\newcommand{\by}{\bm y}
\newcommand{\bz}{\bm z}
\newcommand{\bD}{\bm D}
\newcommand{\bF}{\bm F}
\newcommand{\bM}{\bm M}
\newcommand{\bP}{\bm P}
\newcommand{\bQ}{\bm Q}
\newcommand{\bT}{\bm T}
\newcommand{\bW}{\bm W}
\newcommand{\bX}{\bm X}
\newcommand{\bY}{\bm Y}
\newcommand{\one}{\bm{1}}
\newcommand{\sN}{\mathcal{N}}
\newcommand{\sX}{\mathcal{X}}
\DeclarePairedDelimiter\ceil{\lceil}{\rceil}
\DeclareSymbolFont{sfoperators}{OT1}{cmss}{m}{n}
\DeclareSymbolFontAlphabet{\mathsf}{sfoperators}
\renewcommand{\operator@font}{\mathgroup\symsfoperators}
\DeclareMathOperator{\sym}{sym}
\DeclareMathOperator{\Tr}{Tr}
\DeclareMathOperator{\Sym}{Sym}
\DeclareMathOperator{\Unif}{Unif}
\DeclareMathOperator{\Var}{Var}
\DeclareMathOperator{\Cov}{Cov}
\newcommand{\Px}{\mathop{\mathbb{P}}}  % puts symbols below
\newcommand{\Ex}{\mathop{\mathbb{E}}}  % puts symbols below
\newcommand{\Varx}{\mathop{\mathsf{Var}}}  % puts symbols below
\newcommand{\xto}{\xrightarrow}
\newcommand{\Ber}{\mathsf{Ber}}
\newcommand{\cdeg}{\mathsf{cdeg}}
\newcommand{\Adv}{\mathsf{Adv}}
\newcommand{\CAdv}{\mathsf{CAdv}}
\renewcommand{\epsilon}{\varepsilon}
\newcommand{\avg}{\mathsf{avg}}
\newcommand{\inj}{\mathsf{inj}}
\newcommand{\bmu}{\bar{\mu}}
\newcommand{\bbz}{\bar{\bm z}}
\newcommand{\Pear}{\mathsf{Pear}}
\newcommand{\Mult}{\mathsf{Mult}}
\newcommand{\tlarge}{\,\mathsf{large}}
\newcommand{\tsmall}{\,\mathsf{small}}
\newcommand\numberthis{\addtocounter{equation}{1}\tag{\theequation}}
\title{Low coordinate degree algorithms II: Categorical signals and generalized stochastic block models}
\date{December 30, 2024}
\author{Dmitriy Kunisky\thanks{Email: \textit{kunisky@jhu.edu}. Partially supported by ONR Award N00014-20-1-2335, a Simons Investigator Award to Daniel Spielman, and NSF grants DMS-1712730 and DMS-1719545.}}
\affil{Department of Applied Mathematics \& Statistics, Johns Hopkins University}
\begin{document}

\maketitle

\thispagestyle{empty}

\begin{abstract}
    We study when low coordinate degree functions (LCDF)---linear combinations of functions depending on small subsets of entries of a vector---can test for the presence of categorical structure, including community structure and generalizations thereof, in high-dimensional data.
    This complements the first paper of this series, which studied the power of LCDF in testing for continuous structure like real-valued signals perturbed by additive noise.
    We apply the tools developed there to a general form of stochastic block model (SBM), where a population is assigned random labels and every $p$-tuple of the population generates an observation according to an arbitrary probability measure associated to the $p$ labels of its members.
    We show that the performance of LCDF admits a unified analysis for this class of models.
    As applications, we prove tight lower bounds against LCDF (and therefore also against low degree polynomials) for nearly arbitrary graph and regular hypergraph SBMs, always matching suitable generalizations of the Kesten-Stigum threshold.
    We also prove tight lower bounds for group synchronization and abelian group sumset problems under the ``truth-or-Haar'' noise model, and use our technical results to give an improved analysis of Gaussian multi-frequency group synchronization.
    In most of these models, for some parameter settings our lower bounds give new evidence for conjectural statistical-to-computational gaps.
    Finally, interpreting some of our findings, we propose a precise analogy between categorical and continuous signals: a general SBM as above behaves, in terms of the tradeoff between subexponential runtime cost of testing algorithms and the signal strength needed for a testing algorithm to succeed, like a spiked $p_*$-tensor model of a certain order $p_*$ that may be computed from the parameters of the SBM.
\end{abstract}

\clearpage

\tableofcontents

\pagestyle{empty}

\clearpage

\setcounter{page}{1}
\pagestyle{plain}

\section{Introduction}

The first paper \cite{Kunisky-2024-LCDA1} in this series proposed \emph{low coordinate degree function (LCDF) algorithms} (reviewed in Section~\ref{sec:lcdf} below) as a fruitful alternative to the now widely-studied class of \emph{low degree polynomial (LDP) algorithms} for hypothesis testing \cite{HS-2017-BayesianEstimation, BHKKMP-2019-PlantedClique, HKPRSS-2017-SOSSpectral, Hopkins-2018-Thesis, KWB-2022-LowDegreeNotes}.
That work found that LCDF algorithms, though a more powerful and expressive class than LDP algorithms, are actually often easier to analyze.
In particular, it gave a unified analysis of the performance of LCDF for detecting weak high-dimensional signals observed through entrywise independent noisy channels, including models such as spiked matrices and tensors under general noise models.
In those results, the notion of a \emph{weak} signal has a specific quantitative meaning: the signal takes continuous values, say in $\RR$, and has typically to be close to zero.
In that case, the difficulty of a testing problem is governed only by the way that the noisy channel corrupts infinitesimal signals, which is why it turns out that, in that setting, channels do not all behave differently but rather fall into large \emph{universality classes}: only a single scalar summary statistic of their behavior on small signals characterizes how difficult they are for LCDF algorithms.

Here we continue to explore LCDF algorithms and the greater level of generality at which they allow us to analyze such statistical questions, but we focus on the complementary setting of \emph{categorical} signals.
The motivation for doing this is another broad class of models of interest in the high-dimensional statistics literature, for which the above assumption about quantitatively weak signals is not even wrong but just nonsensical: often a signal of interest is not a continuous object at all, but rather a combinatorial one, like the assignment of the nodes of a random graph to latent communities that govern nodes' probabilities of being connected to one another or not.
Sometimes, as discussed for certain stochastic block models in \cite{Kunisky-2024-LCDA1}, it is possible to shoehorn such models into the setting of continuous signals (after all, the community assignments above are not continuous, but the \emph{probabilities} of pairs being connected still are).
But this is not always possible or convenient, and we will see that developing a separate framework tailored to such combinatorial models leads to a different class of universality phenomena and intriguing and subtler parallels between testing for continuous and categorical signals.

\subsection{Generalized Stochastic Block Models}

We begin by describing the models that our results will treat.
These are a general form of \emph{stochastic block model (SBM)}, a model of random graphs originating in the social science literature; see \cite{Moore-2017-SBMReview, Abbe-2017-SBMReview} for recent theoretical surveys.
Our model is similar to the families of models considered in \cite{HLM-2012-LabelledSBM, LMX-2015-LabelledSBM} under the name of \emph{labeled SBMs}, and especially similar to the \emph{generalized SBM} of \cite{XML-2014-InferenceGeneralizedStochasticBlockModel}, though still differing slightly in the level of generality of different ingredients of the definition.
The definition is so similar in spirit to the latter, though, that we keep the same name.

\begin{definition}[Generalized stochastic block model]
    \label{def:gsbm}
    Let $p \geq 2$, let $k, n \geq 1$, and let $\Omega$ be a measurable space.
    A \emph{generalized stochastic block model (GSBM)} is specified by, for each $\ba \in [k]^p$, a probability measure $\mu_{\ba}$ on $\Omega$.
    Write
    \begin{equation}
        \mu_{\avg} \colonequals \frac{1}{k^p} \sum_{\ba \sim \Unif([k]^p)} \mu_{\ba},
        \label{eq:mu-avg}
    \end{equation}
    so that $\mu_{\avg}$ is another probability measure on $\Omega$.
    The GSBM then consists of the following two probability measures over $\bY \in \Omega^{\binom{[n]}{p}}$:
    \begin{enumerate}
    \item Under $\QQ$, draw $\bY \sim \QQ$ with $Y_S \sim \mu_{\avg}$ independently for each $S \in \binom{[n]}{p}$.
    \item Under $\PP$, first draw $\bx \sim \Unif([k]^n)$.
        Then, for each $S = \{s_1 < \cdots < s_p\} \in \binom{[n]}{p}$, draw $Y_S \sim \mu_{x_{s_1}, \dots, x_{s_p}}$ independently.
    \end{enumerate}
    We call $p$ the \emph{order} and $n$ the \emph{size} of a GSBM.
\end{definition}
\noindent
We will be interested in the problem of \emph{detection} or \emph{hypothesis testing} in such a model: given a draw from either $\QQ$ or $\PP$, can we with high probability determine which measure the observation was drawn from?
Precise details about our notion of testing follow below in Section~\ref{sec:lcdf}.

The class of GSBMs models essentially arbitrary situations where some collection of objects are each assigned discrete information denoted by their label in $[k]$, and where each subset of $p$ objects interacts in a way that depends only on their labels.\footnote{One further generalization that very likely could be treated by our methods but that we do not endeavor to handle for the sake of simplicity is where the assignment of labels is not uniformly random but is done according to some probability distribution on $[k]$.}
We then observe the outcome of all of these interactions, through a family of noisy channels described by the collection of $(\mu_{\ba})_{\ba \in [k]^p}$, which say how any collection of $p$ labels in $[k]$ gives rise to a random observation in $\Omega$.

In conventional stochastic block models on graphs or hypergraphs, what we have called ``labels'' above are assignments to communities, and $\Omega = \{0, 1\}$, so that outcomes of interactions may be interpreted as Boolean variables and the entire collection of outcomes as (the adjacency matrix or tensor of) a graph (if $p = 2$) or hypergraph (if $p \geq 3$).
We emphasize, though, that $\Omega$ need not equal $\{0, 1\}$ or even a discrete set at all, but can be absolutely arbitrary.
We handle but do not give applications where $\Omega$ is continuous, but one non-Boolean situation we will consider in some of our applications is when $\Omega$ is a finite group, for instance.

We conclude the discussion of the setting with the assumptions we make on a GSBM in our main results.
\begin{assumption}
    \label{assum}
    We always make the following assumptions on a GSBM:
    \begin{enumerate}
    \item The GSBM is \emph{non-trivial}: there exists $\ba \in [k]^p$ such that $\mu_{\ba} \neq \mu_{\avg}$.
    \item The GSBM is \emph{regular}: for all $\ba \in [k]^p$, the likelihood ratio $d\mu_{\ba} / d\mu_{\avg}$ belongs to $L^2(\mu_{\avg})$.\footnote{That the likelihood ratio exists, i.e., that $\mu_{\ba}$ is absolutely continuous to $\mu_{\avg}$, follows immediately from the definition \eqref{eq:mu-avg} of $\mu_{\avg}$.}
    \item The GSBM is \emph{weakly symmetric}: for all $\ba, \bb \in [k]^p$ and all permutations $\sigma \in \Sym([p])$,
        \[ \Ex_{y \sim \mu_{\avg}}\left[\frac{d\mu_{(a_1, \dots, a_p)}}{d\mu_{\avg}}(y) \cdot \frac{d\mu_{(b_1, \dots, b_p)}}{d\mu_{\avg}}(y)\right] = \Ex_{y \sim \mu_{\avg}}\left[\frac{d\mu_{(a_{\sigma(1)}, \dots, a_{\sigma(p)})}}{d\mu_{\avg}}(y) \cdot \frac{d\mu_{(b_{\sigma(1)}, \dots, b_{\sigma(p))}}}{d\mu_{\avg}}(y)\right]. \]
        Note that these expectations are well-defined and finite by the regularity assumption.
    \end{enumerate}
\end{assumption}
\noindent
Non-triviality excludes the case where $\PP = \QQ$, i.e.\ the two models we propose testing between are exactly the same probability measure, in which case of course testing is trivially impossible.
Regularity ensures that various objects akin to $\chi^2$ divergences that we will encounter are always finite.
Weak symmetry is slightly subtler.
One useful remark is that it is implied by the following stronger and more natural condition.
\begin{definition}
    We say that a GSBM is \emph{strongly symmetric} if, for all $\ba \in [k]^p$ and $\sigma \in \Sym([p])$ a permutation,
    \[ \mu_{(a_1, \dots, a_p)} = \mu_{(a_{\sigma(1)}, \dots, a_{\sigma(p)})}. \]
\end{definition}
\begin{proposition}
    A strongly symmetric GSBM is also weakly symmetric.
\end{proposition}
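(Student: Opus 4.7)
The plan is to simply observe that strong symmetry applies separately to $\ba$ and to $\bb$ inside the weak symmetry identity, making the integrands themselves (not just their integrals) equal $\mu_{\avg}$-almost everywhere.

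More concretely, I would proceed as follows. Fix $\ba, \bb \in [k]^p$ and $\sigma \in \Sym([p])$. Strong symmetry applied to $\ba$ gives the equality of probability measures $\mu_{(a_1, \dots, a_p)} = \mu_{(a_{\sigma(1)}, \dots, a_{\sigma(p)})}$, and similarly for $\bb$. Since two equal probability measures have Radon--Nikodym derivatives with respect to any common dominating measure that agree almost everywhere, we obtain
\[ \frac{d\mu_{(a_1, \dots, a_p)}}{d\mu_{\avg}}(y) = \frac{d\mu_{(a_{\sigma(1)}, \dots, a_{\sigma(p)})}}{d\mu_{\avg}}(y), \qquad \frac{d\mu_{(b_1, \dots, b_p)}}{d\mu_{\avg}}(y) = \frac{d\mu_{(b_{\sigma(1)}, \dots, b_{\sigma(p)})}}{d\mu_{\avg}}(y), \]
for $\mu_{\avg}$-almost every $y \in \Omega$. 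Multiplying these two equalities shows that the integrands in the two sides of the weak symmetry identity are $\mu_{\avg}$-a.e.\ equal, so their expectations (which exist and are finite by the regularity assumption in Assumption~\ref{assum}) coincide.

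There is no real obstacle here: the content of the statement is just that weak symmetry is a statement about second moments of likelihood ratios, and strong symmetry already equates the underlying measures before any integration is performed. The only very minor subtlety is to note that when one rewrites the Radon--Nikodym derivative of $\mu_{(a_{\sigma(1)},\dots,a_{\sigma(p)})}$ as that of $\mu_{(a_1,\dots,a_p)}$, the resulting almost-everywhere equality is with respect to $\mu_{\avg}$, which is precisely the measure against which the expectation in the weak symmetry identity is taken, so no change-of-measure argument is needed.
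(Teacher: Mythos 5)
Your proof is correct. The paper states this proposition without a proof, evidently regarding it as immediate, and your argument is the natural one: strong symmetry is an equality of measures, which forces equality of the Radon--Nikodym derivatives $\mu_{\avg}$-almost everywhere by uniqueness of the Radon--Nikodym derivative, and weak symmetry is then just equality of integrals of $\mu_{\avg}$-a.e.\ equal integrands.
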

\noindent
But, weak symmetry suffices for our calculations, and we will encounter at least one interesting model, namely synchronization over finite groups, where only weak, but not strong, symmetry is satisfied (in short, because there pairs of group elements interact such that from the pair $g, h$ one receives a noisy version of the asymmetric group difference $gh^{-1}$).
One may think of strong symmetry as asking that the observation channel $\mu_{\ba}$ truly has no ``direction'' it imposes among its inputs $(a_1, \dots, a_p)$, while weak symmetry allows for such a directionality but only asks that a certain measure of the ``relative similarity'' of distributions of observations through the channel does not depend on it.

\subsection{Low Coordinate Degree Algorithms}
\label{sec:lcdf}

We will be interested in the following specific notion of success in hypothesis testing.\footnote{We take a somewhat different perspective here than in \cite{Kunisky-2024-LCDA1}, and believe that this viewpoint is a sounder one than the more heuristic discussion of the meaning of the low coordinate degree advantage in that work.}
\begin{definition}[Strong separation]
    Consider a sequence of pairs of probability measures $\PP_n, \QQ_n$ over measurable spaces $\Omega_n$.
    We say that functions $f_n: \Omega_n \to \RR$ achieve \emph{strong separation} if
    \begin{equation}
        \Ex_{\bY \sim \PP_n} f_n(\bY) - \Ex_{\bY \sim \QQ_n} f_n(\bY) = \omega\left(\sqrt{\Varx_{\bY \sim \QQ_n} f_n(\bY)} + \sqrt{\Varx_{\bY \sim \PP_n} f_n(\bY)}\right)
    \end{equation}
    as $n \to \infty$.
\end{definition}
\noindent
Strong separation is a version of it being possible to hypothesis test consistently: given a strongly separating family of functions, one may choose a suitable threshold and obtain a sequence of hypothesis tests having Type~I and~II errors both $o(1)$ as $n \to \infty$, by Chebyshev's inequality.
In fact, a simple argument shows that the converse is also true, and strong separation by \emph{some} family of functions is equivalent to the above notion of consistent testing, often called \emph{strong detection}.

However, we will ask not whether there exist \emph{any} functions achieving strong separation, but whether there exist \emph{computationally tractable} such functions.
As alluded to above, much prior work has focused on this question for low degree polynomial (LDP) functions.
It should already be clear, however, that this will not work for the models we have proposed to study, since our observations will take values in arbitrary spaces.
Instead, and as explored in \cite{Kunisky-2024-LCDA1} based on an early proposal by Hopkins \cite{Hopkins-2018-Thesis}, we focus on the following more general class of functions that also have the benefit of being well-defined much more generally.\footnote{These functions also played an important role in the recent work \cite{BBHLS-2020-SQLowDegree}.}

Let $\Omega$ be a measurable space and $\QQ$ be a product measure on $\Omega^N$ for some $N \geq 1$, as in Definition~\ref{def:gsbm}.
We follow the definitions from \cite{Kunisky-2024-LCDA1}: for $\by \in \Omega^N$ and $T \subseteq [N]$, we write $\by_T \in \Omega^T$ for the restriction of $\by$ to the coordinates in $T$.
We define subspaces of $L^2(\QQ)$,
\begin{align}
  V_T &\colonequals \{f \in L^2(\QQ): f(\by) \text{ depends only on } \by_T\}, \\
        V_{\leq D} &\colonequals \sum_{\substack{T \subseteq [N] \\ |T| \leq D}} V_T.
\end{align}
\begin{definition}[Coordinate degree]
    For $f \in L^2(\QQ)$, we define $\cdeg(f) \colonequals \min\{D: f \in V_{\leq D}\}$, and call this the \emph{coordinate degree} of $f$.
\end{definition}
\noindent
We call $V_{\leq D}$ a space of low coordinate degree functions (LCDF), and this is the precise meaning of this term mentioned earlier.

Functions of coordinate degree at most $D$, when $\Omega \subseteq \RR$, include polynomials of degree at most $D$ (as each monomial of degree $D$ depends on at most $D$ coordinates).
Thus, when we prove lower bounds against LCDF we also always learn lower bounds against LDP, in the style of the results cited above as well as recent works like \cite{BKW-2019-ConstrainedPCA, DKWB-2019-SubexponentialTimeSparsePCA,BBKMW-2020-SpectralPlantingColoring,BAHSWZ-2022-FranzParisiLowDegree, BBHLS-2020-SQLowDegree,RSWY-2022-CountCommunitiesLowDegree,KVWX-2023-LowDegreeColoringClique}.
But, LCDF also~(1)~are sensible for arbitrary $\Omega$, for instance $\Omega$ an object like a finite group whose elements have only algebraic meaning, and (2) include many more functions, such as LDP applied after arbitrary entrywise functions taking numerical values (say, low-degree polynomials of a representation applied to a vector of group elements).\footnote{It is worth noting, however, that for finite sets $\Omega$ coordinate degree is the same as polynomial degree when $\Omega$ is given the ``one-hot'' Boolean encoding, as discussed in \cite{KM-2021-ReconstructionTreesLowDegree,Kunisky-2024-LCDA1}.}

We now have all of the ingredients needed to describe the template that all of our main results will follow: we will show, for sequences of hypothesis testing problems of $\QQ_n$ versus $\PP_n$ arising from sequences of GSBMs for growing $n$, that sequences of LCDF $(f_n)_{n \geq 1}$ with  $\cdeg(f_n) \leq D(n)$ cannot strongly separate $\QQ_n$ and $\PP_n$.
In particular, as for LDP (see, e.g., the discussion in \cite{KWB-2022-LowDegreeNotes}), we may think intuitively of LCDF of coordinate degree $D(n)$ as describing all functions taking roughly time $n^{D(n)} = \exp(\widetilde{\Theta}(D(n)))$ to compute.
Thus, these kinds of lower bounds suggest, depending on the scaling of $D(n)$, that various orders of superpolynomial but subexponential time are required to solve testing problems.

\subsection{Main Results: General Theory}

At the center of the theory we develop around the performence of LCDF on GSBMs is the following object, which, while perhaps mysterious for the moment, we will see arises naturally in calculations of likelihoods with GSBMs, and, more to the point, contains all of the important information about the difficulty of testing in a GSBM.

\begin{definition}[Characteristic tensor]
    For a GSBM specified by $(\mu_{\ba})_{\ba \in [k]^p}$, we define its \emph{characteristic tensor} (or \emph{matrix} if $p = 2$) to be $\bT = \bT^{(p)} \in (\RR^{[k] \times [k]})^{\otimes p}$ having entries
    \[ T_{(a_1, b_1), \dots, (a_p, b_p)} = \frac{1}{p!} \Ex_{y \sim \mu_{\avg}}\left[\left(\frac{d\mu_{(a_1, \dots, a_p)}}{d\mu_{\avg}}(y) - 1\right)\left(\frac{d\mu_{(b_1, \dots, b_p)}}{d\mu_{\avg}}(y) - 1\right)\right]. \]
\end{definition}
\noindent
To hint at the statistical relevance of $\bT$, note that the entries $T_{(a_1, a_1), \dots, (a_p, a_p)}$ are proportional to the $\chi^2$ divergence between $\mu_{\ba}$ and $\mu_{\avg}$, which is indeed a measurement of how atypical the observations under $\PP$ are compared to those under $\QQ$.
The remaining entries contain a kind of ``cross--$\chi^2$ divergence'' between different $\mu_{\ba}$ relative to $\mu_{\avg}$.
The reader interested in these matters is encouraged to look at the discussion in \cite{Kunisky-2024-LCDA1}; for the sake of brevity, we will not revisit those details here.

We note also that, for a weakly symmetric GSBM, $\bT$ as defined above is a symmetric tensor, but this would not be the case if we viewed $\bT$ instead as a tensor in $(\RR^k)^{\otimes 2p}$, not flattening $(a_i, b_i)$ into an element of $[k]^2$.

For the next definition and several to follow, a small amount of notation for working with tensors and their ``contractions'' will be useful.
\begin{definition}[Partial tensor contraction]
    Let $\bT \in (\RR^N)^{\otimes p}$ be a symmetric tensor and let $\bv_1, \dots, \bv_m \in \RR^N$ for some $1 \leq m \leq p$.
    We write $\bT[\bv_1, \dots, \bv_m, \cdot, \dots, \cdot\,] \in (\RR^N)^{\otimes p - m}$ for the tensor with entries
    \[ (\bT[\bv_1, \dots, \bv_m, \cdot\,, \dots, \cdot\,])_{i_1, \dots, i_{p - m}} = \sum_{j_1, \dots, j_m = 1}^N T_{j_1, \dots, j_m, i_1, \dots, i_{p - m}} (\bv_1)_{j_1} \cdots (\bv_m)_{j_m}. \]
\end{definition}
\noindent
The reader familiar with tensor network notation may view this as attaching $\bv_1, \dots, \bv_m$ to $m$ of the ``axes'' of $\bT$, while leaving the remaining $p - m$ axes free.

\begin{definition}[Marginal characteristic tensors]
    From the characteristic tensor $\bT^{(p)}$ of a GSBM as above, we further define a sequence of tensors $\bT^{(p - j)} \in (\RR^{[k] \times [k]})^{\otimes (p - j)}$ by
    \[ \bT^{(p - j)} = \frac{1}{k^{2j}} \bT^{(p)}[\,\underbrace{\one, \dots, \one}_{j \text{ times}}, \underbrace{\cdot\,, \dots, \cdot}_{p - j \text{ times}}\,], \]
    where $\one$ is the vector all of whose entries are 1 (in this case of dimension $k^2$).
\end{definition}
\noindent
The reason for the name is that, as one may check, $\bT^{(p - j)}$ is the characteristic tensor of another GSBM, now with $(p - j)$-way interactions, parametrized by $\mu_{\ba}^{(p - j)}$ formed by \emph{marginalizing} the $\mu_{\ba}$ defining the original GSBM, in the sense that
\[ \mu^{(p - j)}_{a_1, \dots, a_{p - j}} = \frac{1}{k^j}\sum_{a_{p - j + 1}, \dots, a_p = 1}^k \mu_{a_1, \dots, a_p}. \]
Alternatively, one samples from $\mu^{(p - j)}_{\widetilde{\ba}}$ by extending $\widetilde{\ba} \in [k]^{p - j}$ from a $(p - j)$-tuple to a $p$-tuple by adding $j$ entries from $[k]$ uniformly at random to form $\ba \in [k]^p$, and then sampling from the corresponding $\mu_{\ba}$.

The basic idea that our calculations will reflect is that, whenever one observes a GSBM, one observes many nearly-independent copies of its marginal GSBMs as well.
For instance, the ordinary graph SBM is a GSBM with $p = 2$ describing community structure in a random graph: each $\mu_{(a, b)}$ is a Bernoulli measure describing how much communities $a$ and $b$ tend to interact.
The case $j = 1$ of marginalization describes the degree distributions of the vertices in different communities: in the above interpretation, to sample from $\mu^{(1)}_{(a)}$, one chooses a random $b \in [k]$, i.e., a random community, and draws from $\mu_{(a, b)}$.
So, $\mu^{(1)}_{(a)}$ is another Bernoulli measure, now just describing how much community $a$ tends to connect to \emph{all} other communities.
And indeed, when we observe a random graph, we observe the connection patterns of all $n$ vertices to the other vertices, which look approximately like $n$ independent draws from the $j = 1$ marginalization.

More generally, the correct intuition is that from one observation of a GSBM we get information that looks approximately like $\Theta(n^j)$ draws of the marginalization to a model of order $p - j$.
This large number of draws will carry a large amount of signal for hypothesis testing, \emph{unless} $\bT^{(p - j)} = \bm 0$, in which case the marginalized model is what we have called \emph{trivial}---in the above SBM example, this happens if the average degrees of members of any particular community are the same, so just looking at degrees cannot be helpful for testing regardless of how many ``effective draws'' from this marginalized model we receive.
By making this precise, we will see that the difficulty of detection in a GSBM is governed by the greatest amount of marginalization we can perform before reaching a vanishing characteristic tensor, to which we give the following name:

\begin{definition}[Marginal order]
    The \emph{marginal order} of a non-trivial GSBM is the smallest $p_*$ for which the marginal characteristic tensor $\bT^{(p_*)} \neq \bm 0$ (equivalently, for which the marginal model of order $p_*$, in the above sense, is non-trivial).
\end{definition}

By construction, $\bT^{(0)}$ is the scalar zero, and thus we always have $p_* > 0$.
On the other hand if $\bT^{(p)} = \bm 0$ then we must have $\mu_{(a_1, \dots, a_p)} = \mu_{\avg}$ for all $\ba$, in which case $\PP = \QQ$ and the original model is trivial.
So, under our assumption that a GSBM is non-trivial, we always have $p_* \leq p$ (in particular $p_*$ is always well-defined, hence our assumption of non-triviality in the definition), and the range of possible values of the marginal order in this case is
\[ 1 \leq p_* \leq p. \]

We are now ready to state our main abstract results, which give simple conditions for lower bounds against LCDF for sequences of GSBMs in terms of the characteristic tensors of a model and its marginalizations.
\begin{definition}[Injective norm]
    For a symmetric tensor $\bX \in (\RR^N)^{\otimes p}$, its \emph{injective norm} is
    \[ \|\bX\|_{\inj} \colonequals \max_{\substack{\bv \in \RR^N \\ \|\bv\| = 1}} |\langle \bX, \bv^{\otimes p} \rangle| = \max_{\substack{\bv \in \RR^N \\ \|\bv\| = 1}} \left|\sum_{i_1, \dots, i_p = 1}^d X_{i_1, \dots, i_d} v_{i_1} \cdots v_{i_d}\right|. \]
\end{definition}

\begin{remark}
    It seems likely from examining our technical calculations that the ``right'' quantity that should appear in these results is not the injective norm but the tensor analog of the maximum eigenvalue,
    \[ \max_{\substack{\bv \in \RR^N \\ \|\bv\| = 1}} \langle \bX, \bv^{\otimes p} \rangle. \]
    However, there appear to be technical obstructions to achieving this, and we have not found any examples where our current methods give loose estimates while a version improved in this way would be significantly sharper.
\end{remark}

The shared setup for the two results below will be as follows.
Let $p \geq 2$, $k \geq 1$, and $\Omega$ a measurable space.
Consider a sequence of non-trivial GSBMs, with $p, k, \Omega$ fixed but with size $n \geq 1$ increasing, and $\mu_{\ba} = \mu_{n, \ba}$ possibly depending on $n$, and suppose these lead to characteristic tensors and corresponding marginalizations $\bT^{(j)}_n$.
Suppose every GSBM in this sequence, for sufficiently large $n$, has marginal order at least $p_* \in [p]$.\footnote{Results that we state for lower marginal order always also apply to higher marginal order---higher marginal order is a stronger assumption, asking that more marginalized characteristic tensors be zero.}
Let $\QQ = \QQ_n$ and $\PP = \PP_n$ be the sequence of probability measures over $\Omega^{\binom{[n]}{p}}$ corresponding to these GSBMs.

\begin{theorem}[General lower bound for general marginal order]
    \label{thm:lcdf-p3}
    Suppose that $p_* \geq 2$ in the above setting.
    There is a constant $c = c_{p, k}$ depending only on $p$ and $k$ such that, if for all sufficiently large $n$ we have $D(n) \leq cn$ and
    \begin{equation}
        \max_{p_* \leq j \leq p} \|\bT^{(j)}_n\|_{\inj} \leq c n^{-\frac{2p - p_*}{2}}D(n)^{-\frac{p_* - 2}{2}},
        \label{eq:lcdf-p3}
    \end{equation}
    then no sequence of functions of coordinate degree at most $D(n)$ can strongly separate $\QQ_n$ from $\PP_n$.
\end{theorem}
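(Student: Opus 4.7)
The plan is to invoke the standard low coordinate degree framework: by a Cauchy-Schwarz argument, if any sequence $f_n \in V_{\leq D(n)}$ strongly separates $\PP_n$ from $\QQ_n$, then the $L^2(\QQ_n)$-projection $L_{n, \leq D}$ of the likelihood ratio $L_n = d\PP_n/d\QQ_n$ onto $V_{\leq D(n)}$ must satisfy $\|L_{n, \leq D(n)}\|_{L^2(\QQ_n)} \to \infty$. Thus it suffices to prove $\|L_{n, \leq D(n)}\|_{L^2(\QQ_n)}^2 = O(1)$ under the hypotheses. Fixing an orthonormal basis $\{\psi_i\}_{i \geq 0}$ of $L^2(\mu_{\avg})$ with $\psi_0 \equiv 1$ and tensorizing over $S \in \binom{[n]}{p}$ gives an orthonormal basis of $L^2(\QQ_n)$ graded by coordinate degree equal to the number of non-constant factors. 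Expanding
\[ L_n(\bY) = \Ex_{\bx \sim \Unif([k]^n)} \prod_{S \in \binom{[n]}{p}} \frac{d\mu_{\bx_S}}{d\mu_{\avg}}(Y_S), \]
projecting onto $V_{\leq D}$, and invoking the identity $\langle d\mu_{\ba}/d\mu_{\avg} - 1, d\mu_{\bb}/d\mu_{\avg} - 1\rangle_{L^2(\mu_{\avg})} = p! \cdot T^{(p)}_{(a_1, b_1), \dots, (a_p, b_p)}$ yields
\[ \|L_{n, \leq D}\|_{L^2(\QQ_n)}^2 = \sum_{\substack{R \subseteq \binom{[n]}{p} \\ |R| \leq D}} (p!)^{|R|} \, \Ex_{\bx, \bx' \sim \Unif([k]^n)} \prod_{S \in R} T^{(p)}_{(x_{s_1}, x'_{s_1}), \dots, (x_{s_p}, x'_{s_p})}. \]

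Each configuration $R$ is a $p$-uniform hypergraph on a vertex set $V(R) \subseteq [n]$. Because $\bx, \bx'$ are independent across coordinates, the expectation factors over $i \in V(R)$: writing $\bw_i \in \RR^{[k]^2}$ for the one-hot indicator of $(x_i, x'_i)$, one has $\Ex[\bw_i] = \frac{1}{k^2}\one$, and more generally $\Ex[\bw_i^{\otimes m}]$ is the ``diagonal'' tensor $\frac{1}{k^2} \sum_u \be_u^{\otimes m}$. The crucial observation is that each vertex of multiplicity~$1$ contributes $\frac{1}{k^2}\one$ contracted against a single axis of $\bT^{(p)}$, which by definition of marginalization converts $\bT^{(p)}$ into $\bT^{(p-1)}$ on that axis. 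Iterating over all multiplicity-$1$ vertices, each surviving edge $S \in R$ contributes a copy of $\bT^{(m_S)}$, where $m_S$ is the number of vertices of $S$ of multiplicity $\geq 2$; by the marginal-order assumption, any term with some $m_S < p_*$ vanishes, so only configurations in which every edge has at least $p_*$ ``high-multiplicity'' vertices contribute. The remaining contractions among the $\bT^{(m_S)}$ (glued at high-multiplicity vertices via the diagonal tensors $\Ex[\bw_i^{\otimes m}]$) are controlled by the injective-norm inequality for symmetric tensors---$|\langle \bT^{(m)}, \bv_1 \otimes \cdots \otimes \bv_m\rangle| \leq C_{p, k} \|\bT^{(m)}\|_{\inj} \prod_i \|\bv_i\|$ up to polarization constants absorbed into $c_{p, k}$---where the gluing vectors have Frobenius norm bounded by a $(p, k)$-dependent constant.

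Summing over $R$ with $e$ edges and $v$ vertices, using the combinatorial count $\binom{n}{v}\binom{\binom{v}{p}}{e} \leq n^v v^{pe}/e!$, and substituting the hypothesis~\eqref{eq:lcdf-p3}, one obtains a sum over $(v, e)$ whose powers of $n$ and $D$ cancel precisely in the dominant ``tree-like'' regime (where the forced high-multiplicity structure dictates $v \approx p_* \cdot e / 2 + \text{correction}$), producing a geometric series in $e$ with ratio $O(c_{p, k})$, bounded by an absolute constant once $c_{p, k}$ is chosen small enough. The hypothesis $D(n) \leq c n$ is used to ensure that $v$ does not saturate $n$ and that vertex-choice counts retain their polynomial form. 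The main obstacle I anticipate is the bookkeeping in the second step: precisely tracking how vertex multiplicities translate into reductions of the characteristic tensor's order, and bounding the resulting non-symmetric tensor contractions by injective norms without losing the sharp exponents $(2p - p_*)/2$ and $(p_* - 2)/2$ that are needed in applications to match the Kesten-Stigum threshold. In particular, vertices of multiplicity exactly $2$ introduce diagonal-tensor ``gluings'' between two edges, and reducing these to a clean injective-norm bound requires the weak-symmetry assumption in Assumption~\ref{assum} to symmetrize the relevant contractions before polarization.
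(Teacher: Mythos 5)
Your proposal takes a genuinely different route from the paper, and unfortunately the route you chose has significant gaps in the crucial middle steps, precisely where you anticipate trouble.

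You expand $\|L_{n, \leq D}\|_{L^2(\QQ_n)}^2$ as a sum over hypergraph configurations $R \subseteq \binom{[n]}{p}$ and try to analyze this sum directly via vertex-multiplicity bookkeeping, peeling off multiplicity-one vertices to produce marginal characteristic tensors, and bounding the remaining tensor network contractions by injective norms. This is the ``classical'' combinatorial approach to low-degree lower bounds. The paper avoids it entirely: its starting point is Lemma~\ref{lem:cadv}, which (via \cite[Theorem 3.5 and Lemma 4.5]{Kunisky-2024-LCDA1}) collapses the entire hypergraph sum into a single scalar moment computation,
\[
\CAdv_{\leq D}(\QQ, \PP)^2 \leq \Ex_{\bz \sim \Mult(n, k^2)} \exp^{\leq D}\bigl(\langle \bT, \bz^{\otimes p}\rangle\bigr),
\]
where $\bz$ is a \emph{fixed-dimensional} multinomial count vector recording the joint empirical distribution of $(\bx, \bx')$. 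The crucial trick in establishing this (the step you don't have) is a comparison inequality that replaces the sum over $p$-tuples with \emph{distinct} indices by a sum over \emph{all} $p$-tuples, which can only increase the bound because all cross-terms in the resulting expansion vanish. Once in this form, Proposition~\ref{prop:marginal-expansion} expands $\langle \bT, \bz^{\otimes p}\rangle$ in marginal tensors, Corollary~\ref{cor:cadv-bound} reduces to a Pearson $\chi^2$ statistic, and Corollary~\ref{cor:simple-moment} on $\chi^2_{\Pear}$ moments finishes. All the hypergraph bookkeeping is sidestepped.

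The concrete gaps in your route: first, your claim that the glued tensor network is controlled by a product of injective norms is not established, and it is not true in general. The injective-norm bound $|\langle \bT^{(m)}, \bv_1 \otimes \cdots \otimes \bv_m\rangle| \lesssim \|\bT^{(m)}\|_{\inj}\prod_i \|\bv_i\|$ handles a \emph{single} tensor contracted against a rank-one vector tuple; when several edges share high-multiplicity vertices, the contraction is against a nontrivial tensor network of diagonal tensors $\frac{1}{k^2}\sum_u \be_u^{\otimes m}$, and the result does not factor into separable injective-norm bounds without a peeling argument (processing the hypergraph in a carefully chosen order), which you have not set up. Second, your combinatorial count $\binom{n}{v}\binom{\binom{v}{p}}{e} \leq n^v v^{pe}/e!$ does not encode the multiplicity constraints you correctly identify as necessary (every surviving edge needs $\geq p_*$ high-multiplicity vertices), so the ``powers of $n$ and $D$ cancel precisely in the dominant tree-like regime'' step is not actually derived. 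Both gaps are avoidable by the paper's reduction, which is worth absorbing: the point of the overlap formula is that all of the combinatorics you are fighting is captured by the distribution of a single $k^2$-dimensional multinomial vector, and the exponents in \eqref{eq:lcdf-p3} fall out of a geometric series once you have moment bounds for $\chi^2_{\Pear}(n, k^2)$.
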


\begin{theorem}[General lower bound for marginal order 2]
    \label{thm:lcdf-p2}
    Suppose that $p_* = 2$ in the above setting.
    Note that in this case $\bT^{(p_*)} = \bT^{(2)}$ is a symmetric matrix and $\|\bT^{(2)}\|_{\inj} = \|\bT^{(2)}\|$ is equivalently the operator norm.
    Suppose that, for constants $C > 0$ and $\epsilon \in (0, 1)$, for all sufficiently large $n$,
    \begin{align*}
      \|\bT^{(2)}_n\| &\leq (1 - \epsilon) \frac{k^2}{p(p - 1)} \frac{1}{n^{p - 1}}, \\
      \|\bT^{(j)}_n\|_{\inj} &\leq C\frac{1}{n^{p - 1}} \text{ for all } 3 \leq j \leq p.
    \end{align*}
    Then, no sequence of functions of coordinate degree at most $D(n) = O(n \, / \log n)$ can strongly separate $\QQ_n$ from $\PP_n$.
\end{theorem}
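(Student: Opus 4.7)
The plan is to bound $\|L_n^{\leq D}\|_{\QQ_n}^2$, the squared $L^2(\QQ_n)$-norm of the $V_{\leq D}$-projection of the likelihood ratio $L_n \colonequals d\PP_n/d\QQ_n$. By Cauchy--Schwarz, any $f \in V_{\leq D}$ satisfies $|\Ex_{\PP_n}[f] - \Ex_{\QQ_n}[f]| \leq \|L_n^{\leq D} - 1\|_{\QQ_n} \sqrt{\Var_{\QQ_n}[f]}$, with an analogous estimate for $\Var_{\PP_n}[f]$, so showing $\|L_n^{\leq D}\|_{\QQ_n}^2 = 1 + O(1)$ suffices to rule out strong separation by any coordinate-degree-$D$ functions.

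Fix an orthonormal basis $\{f_0 \equiv 1, f_1, f_2, \ldots\}$ of $L^2(\mu_{\avg})$; the products $\prod_S f_{i_S}(Y_S)$ then form an ONB of $L^2(\QQ_n)$ whose coordinate degree equals $|\{S : i_S \neq 0\}|$. Computing the Fourier coefficients $\Ex_{\PP_n}[\prod_S f_{i_S}(Y_S)]$ via the product structure of $\PP_n$ conditional on $\bx$, then summing squared Fourier coefficients over configurations with at most $D$ nonzero indices and applying Parseval inside each fixed support (which produces $p!\, T^{(p)}_{(a_1,b_1),\ldots,(a_p,b_p)} = \Ex_{y \sim \mu_{\avg}}[(d\mu_{\ba}/d\mu_{\avg} - 1)(d\mu_{\bb}/d\mu_{\avg} - 1)]$ per hyperedge), we obtain
\[
\|L_n^{\leq D}\|_{\QQ_n}^2 = \sum_{\substack{\sS \subseteq \binom{[n]}{p} \\ |\sS| \leq D}} (p!)^{|\sS|}\, \Ex_{\bx, \bx' \sim \Unif([k]^n)}\!\left[\prod_{S \in \sS} T^{(p)}_{(x_{s_1}, x'_{s_1}),\ldots,(x_{s_p}, x'_{s_p})}\right].
\]
The inner expectation factorizes vertex-by-vertex: for each $v \in [n]$, the average $\frac{1}{k^2}\sum_{x_v, x'_v}$ marginalizes one leg of every $\bT^{(p)}_n$ whose hyperedge contains $v$. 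The hypothesis $p_* = 2$ (i.e., $\bT^{(1)}_n = \zero$) forces to zero any configuration in which some hyperedge has $p - 1$ or more ``free'' vertices (those of degree $1$ in $\sS$), since marginalizing those $p-1$ legs collapses the factor to a copy of $\bT^{(1)}_n$. Hence only configurations in which every hyperedge has at least two ``shared'' (degree $\geq 2$) vertices survive.

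Classify surviving configurations by incidence isomorphism type (``shape''). The extremal and dominant case is the \emph{pure cycle}: $m$ hyperedges arranged around $m$ cyclically-ordered spine vertices of degree exactly $2$, together with $m(p-2)$ free vertices of degree $1$. A standard count gives $\sim n^{m(p-1)}/(2m\,((p-2)!)^m)$ embeddings per shape; marginalizing the free vertices collapses each hyperedge's $\bT^{(p)}_n$ to $\bT^{(2)}_n$; the cyclic coupling along spine vertices then yields $\frac{1}{k^{2m}} \Tr((\bT^{(2)}_n)^m) \leq k^{-2(m-1)} \|\bT^{(2)}_n\|^m$. Combining with the prefactor $(p!)^m$ and using $p!/(p-2)! = p(p-1)$, the total pure-cycle contribution is bounded by
\[
\sum_{m \geq 1} \frac{k^2}{2m}\!\left(\frac{p(p-1)\, n^{p-1}}{k^2}\,\|\bT^{(2)}_n\|\right)^{\!m} \leq \sum_{m \geq 1} \frac{k^2}{2m}(1-\epsilon)^m = O(1)
\]
by the operator-norm hypothesis. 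Non-cycle shapes either contain a vertex of degree $\geq 3$ or a pair of hyperedges sharing $\geq 2$ vertices; after marginalization these involve $\bT^{(j)}_n$ for some $3 \leq j \leq p$, whose contributions are controlled by the injective-norm bound $\|\bT^{(j)}_n\|_{\inj} \leq Cn^{-(p-1)}$, and each such structural defect relative to a pure cycle reduces the vertex count by at least $1$, saving a factor of $n^{-1}$ in the embedding count.

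The main obstacle is the combinatorial bookkeeping over non-cycle shapes: the number of distinct shapes with $m$ hyperedges can grow as $\exp(\Theta(m \log m))$, and each such shape's contribution must be balanced against its defect-induced $n^{-\Omega(1)}$ gain. The hypothesis $D(n) \leq c n / \log n$ is precisely the regime in which $\exp(O(m \log m))$ at $m \leq D$ remains dominated by the vertex-count savings $n^{-r}$ whenever $r \geq 1$, yielding a total non-cycle contribution of $o(1)$. Combined with the pure-cycle estimate this gives $\|L_n^{\leq D}\|_{\QQ_n}^2 = 1 + O(1)$, completing the argument.
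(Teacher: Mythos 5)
Your proposal takes a genuinely different route from the paper: you expand $\|L_n^{\leq D}\|_{\QQ_n}^2$ directly as a sum over subsets $\sS \subseteq \binom{[n]}{p}$ with $|\sS| \leq D$ and classify the resulting contributions by the incidence ``shape'' of $\sS$, in the style of a moment/diagram argument. The paper instead reduces everything to a \emph{single scalar} overlap random variable: by Corollary~\ref{cor:cadv-bound} the squared advantage is bounded by $\EE\,\exp^{\leq D(n)}(R_n)$ with $R_n = \sum_{j\geq 2}\binom{p}{j}k^{-j}\|\bT^{(j)}_n\|_{\inj}\,n^{p-j/2}X_n^{j/2}$ for a Pearson $\chi^2$ statistic $X_n$, and the proof finishes via the sharp tail bound on $X_n$ (Lemma~\ref{lem:pear-right-tail}) fed into the generic truncated-exponential Lemma~\ref{lem:overlap}. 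The paper's reduction deliberately absorbs all the shape combinatorics into the multinomial vector $\bz$, so there is no need to enumerate or bound configurations at all.

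Beyond this difference of route, your proposal has a concrete gap at precisely the step you flag as the ``main obstacle.'' Your stated dichotomy---``non-cycle shapes either contain a vertex of degree $\geq 3$ or a pair of hyperedges sharing $\geq 2$ vertices; after marginalization these involve $\bT^{(j)}_n$ for some $3 \leq j \leq p$''---is not correct. A vertex of degree $\geq 3$ couples three or more factors together; it does not collapse any one hyperedge's factor to a higher-order marginal. Similarly, two hyperedges sharing exactly two vertices each still reduce to $\bT^{(2)}$ after marginalizing their free legs. In fact, for $p=2$ there \emph{are no} tensors $\bT^{(j)}$ with $j \geq 3$, yet non-cycle shapes such as $K_4$ still appear and must be shown subdominant. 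The true mechanism for suppressing non-cycle shapes is the vertex-count deficit (a cycle-rank/Euler-characteristic argument), which is independent of the injective-norm bounds on higher marginals; those bounds are needed for the orthogonal reason of controlling hyperedges with $\geq 3$ shared vertices, which genuinely do involve $\bT^{(j)}$ for $j \geq 3$. Because the conclusion ``combined with the pure-cycle estimate this gives $\|L_n^{\leq D}\|^2 = 1 + O(1)$'' rests on this mischaracterized and unperformed enumeration, the argument as written does not establish the theorem. The pure-cycle computation itself is correct and matches the paper's constant $\frac{k^2}{p(p-1)}$, and the overall strategy could likely be carried out, but completing it would require the full diagram bookkeeping that the paper's overlap reduction is specifically designed to avoid.
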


A few comments are in order.
First, at a high level, we may view these as somewhat akin to universality results like those studied in \cite{Kunisky-2024-LCDA1} for testing for continuous signals: the results state that, while the entire collection of channel measures $(\mu_{\ba})$ is potentially a complicated object, our lower bounds depend only on the finite-dimensional characteristic tensor $\bT$.
We will see below some examples of different models with the same characteristic tensor, for which our results are then identical.
Since these lower bounds are often tight (though our results do not imply that), it seems that categorical testing problems might have similar universality phenomena to continuous ones, though further investigation would be needed to establish that rigorously.

Next, the second result should be viewed as an elaboration of the first in the special case $p_* = 2$: in that case, the condition of the first result asks that $\|\bT^{(j)}_n\|_{\inj} \leq c / n^{p - 1}$ for an unspecified constant $c$.
The second result instead gives a condition involving a precise constant $\frac{k^2}{p(p - 1)}$ in the bound on $\|\bT^{(2)}_n\|$ (and clarifies that the other injective norms may be bounded much more crudely), which we will see often leads to tight analyses giving lower bounds precisely complementary to the best known algorithmic results.

Finally, when $p_* \geq 3$, then the condition on the injective norms in the first result depends on the choice of the degree $D(n)$.
Thus in this case our lower bound will include a smooth tradeoff between the amount of signal required for testing to succeed and the available computational budget as measured by $D(n)$.
As mentioned before, we may think of LCDF of coordinate degree $D(n)$ as taking roughly time $\exp(\widetilde{\Theta}(D(n)))$ to compute.
When we consider $D(n)$ polynomial in $n$, say $D(n) \sim n^{\delta}$ for small $\delta$, then, since the other term on the right-hand side of \eqref{eq:lcdf-p3} is also polynomial in $n$, our lower bound will allow for a regime of \emph{subexponential time} algorithms, where increasing $\delta$ allows for testing for substantially weaker signals (as measured by the norms of the characteristic tensors).
In contrast, together with prior algorithmic results, we will see that Theorem~\ref{thm:lcdf-p2} for $p_* = 2$ leads to much sharper threshold phenomena: outside of a very narrow window of parameters, either a polynomial-time algorithm can solve a testing problem, or coordinate degree close to $n$, i.e., nearly exponential time according to the previous heuristic, is required.

This state of affairs exactly mirrors the difference in behaviors between spiked matrix and spiked tensor models.
For testing between a $p$-ary tensor of i.i.d.\ Gaussian noise and one with a rank-one tensor $\lambda \bx^{\otimes p}$ added, there is precisely the same kind of tradeoff between degree and signal-to-noise ratio $\lambda$ as in Theorem~\ref{thm:lcdf-p3}.
Indeed, replacing the left-hand side of \eqref{eq:lcdf-p3} with $\lambda^2$ and setting $p = p_*$, one obtains exactly the statement of an optimal lower bound against LDP for the $p$-ary spiked tensor model!\footnote{For $\bx$ suitably normalized.}
The reader may compare Theorem~\ref{thm:lcdf-p3} with Theorem~3.3 of \cite{KWB-2022-LowDegreeNotes}.
In contrast, for $p = 2$, this becomes a spiked matrix model, which exhibits precisely the kind of sharper computational threshold described in our Theorem~\ref{thm:lcdf-p2}; cf.\ Theorem~3.9 of \cite{KWB-2022-LowDegreeNotes}.

While here we only probe this intriguing analogy from the point of view of \emph{lower} bounds, similar parallels have also been observed, at least in specific situations, algorithmically as well.
For instance, in Section~\ref{sec:xor} we will treat the example of detecting planted solutions in random XOR-SAT formulas viewed as a GSBM, and in this case \cite{WEAM-2019-KikuchiTensorPCA} devised a unified spectral algorithm for both this task and an analogous tensor PCA problem.
In any case, we hope that these observations will bring attention to what seems to be a valid high-level principle concerning these models:
\[ \text{``A GSBM of marginal order } p_* \text{ behaves qualitatively like a spiked $p_*$-tensor model.''} \]

\subsection{Main Results: Applications}

\subsubsection{Graph Stochastic Block Models}
We begin with applications to the familiar graph-valued SBMs discussed above.
The following is a general form of such a model, with some special assumptions particular to our toolkit.

\begin{definition}[Stochastic block model]
    \label{def:sbm}
    Suppose $k \geq 2$ and $\bQ \in [0, 1]^{k \times k}_{\sym}$ is a symmetric matrix satisfying
    \[ \bQ\one = \lambda \one, \]
    so that $\lambda = \lambda_1(\bQ)$ is the largest (Perron-Frobenius) eigenvalue of $\bQ$.
    Define
    \[ q \colonequals \frac{1}{k^2}\sum_{a, b = 1}^k Q_{ab} = \frac{1}{k^2}\one^{\top}\bQ\one = \frac{\lambda}{k}. \]
    Associated to such a matrix, we define the \emph{stochastic block model (SBM)} consisting, for each $n \geq 1$, of two probability measures over graphs on $n$ vertices or, equivalently, their adjacency matrices, $\bY \in \{0, 1\}^{n \times n}_{\sym}$:
    \begin{enumerate}
    \item Under $\QQ_n$, draw $Y_{ij} \sim \Ber(\frac{1}{n} \cdot q)$ independently for each $1 \leq i < j \leq n$.
    \item Under $\PP_n$, first draw $\bx \in [k]^n$ uniformly at random.
        Then, draw $Y_{ij} \sim \Ber(\frac{1}{n} \cdot Q_{x_ix_j})$ independently for each $1 \leq i < j \leq n$.
    \end{enumerate}
    We call $\bQ$ the \emph{interaction matrix} of such a model.
\end{definition}
\noindent
The condition that $\one$ is an eigenvector of $\bQ$ will ensure, in the language we have introduced above, that this model has marginal order at least 2.

\begin{theorem}[Lower bound for stochastic block model]
    \label{thm:sbm}
    Suppose that $\bQ$ is an interaction matrix as in Definition~\ref{def:sbm}.
    If
    \begin{equation}
        \max_{j \in \{2, \dots, k\}} |\lambda_j(\bQ)|^2 < k \lambda_1(\bQ), \label{eq:ks}
    \end{equation}
    then no sequence of functions of coordinate degree $O(n\, / \log n)$ can strongly separate $\QQ_n$ from $\PP_n$ in the SBM associated to $\bQ$.
\end{theorem}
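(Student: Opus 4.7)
The plan is to view the SBM of Definition~\ref{def:sbm} as a GSBM with $p = 2$, $\Omega = \{0,1\}$, and channels $\mu_{(a,b)} = \Ber(Q_{ab}/n)$ (so $\mu_{\avg} = \Ber(q/n)$), and then invoke Theorem~\ref{thm:lcdf-p2} directly. Assumption~\ref{assum} is easy to check: strong (hence weak) symmetry follows from $\bQ = \bQ^\top$, regularity from the boundedness of Bernoulli likelihood ratios, and we may assume non-triviality since otherwise $\bQ = q\one\one^\top$ forces $\PP = \QQ$ and the claim is vacuous.

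The substantive step is to identify the characteristic tensors explicitly. Setting $\bM \colonequals \bQ - q\one\one^\top$, the shift $(d\mu_{(a,b)}/d\mu_{\avg})(y) - 1$ equals $M_{ab}/q$ at $y = 1$ and $-M_{ab}/(n - q)$ at $y = 0$; averaging the product of two such shifts under $y \sim \mu_{\avg}$ gives, after a short calculation,
\[
T^{(2)}_{(a_1, b_1),(a_2, b_2)} \;=\; \frac{M_{a_1 a_2}\, M_{b_1 b_2}}{2\, q\, (n - q)},
\]
so, under the isomorphism $\RR^{[k] \times [k]} \cong \RR^k \otimes \RR^k$, one has $\bT^{(2)} \;=\; \frac{1}{2q(n-q)}\, \bM \otimes \bM$. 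The marginal then factors as $T^{(1)}_{(a,b)} = (\bM\one)_a (\bM\one)_b / (2k^2 q(n-q))$, which vanishes because $\bQ \one = \lambda \one$ combined with $kq = \lambda$ forces $\bM\one = \zero$. Hence $\bT^{(1)} = \zero$, so the marginal order is at least $p_* = 2$, matching the hypothesis of Theorem~\ref{thm:lcdf-p2}.

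By the tensor-product structure, $\|\bT^{(2)}\| = \|\bM\|^2 / (2q(n-q))$. Since $\bM$ is symmetric, kills $\one$, and agrees with $\bQ$ on $\one^\perp$, its non-zero eigenvalues are exactly $\{\lambda_j(\bQ) : j \geq 2\}$, giving $\|\bM\|^2 = \max_{j \geq 2} |\lambda_j(\bQ)|^2$. Substituting $q = \lambda_1(\bQ)/k$, the hypothesis \eqref{eq:ks} yields, for some $\epsilon_0 > 0$ and all sufficiently large $n$,
\[
\|\bT^{(2)}_n\| \;\leq\; \frac{(1 - \epsilon_0)\, k\, \lambda_1(\bQ)}{2\, q\, (n - q)} \;=\; \frac{(1 - \epsilon_0)\, k^2}{2(n - q)} \;<\; \left(1 - \tfrac{\epsilon_0}{2}\right) \frac{k^2}{2 n},
\]
which is precisely the scalar hypothesis of Theorem~\ref{thm:lcdf-p2}. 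Since $p = 2$ makes the higher-order injective-norm conditions vacuous, that theorem immediately yields the claim. The argument is largely mechanical; the one delicate point, on which tightness hinges, is that the constant $k^2/(p(p-1))$ delivered by Theorem~\ref{thm:lcdf-p2} matches the classical Kesten--Stigum constant \emph{exactly} because the Perron--Frobenius condition $\bQ\one = \lambda\one$ built into Definition~\ref{def:sbm} forces $\bM\one = \zero$, so no spectral mass of $\bT^{(2)}$ is wasted on the eigendirection $\one \otimes \one$.
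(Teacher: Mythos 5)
Your proposal is correct and follows essentially the same route as the paper's proof: translate to a GSBM with $\mu_{(a,b)} = \Ber(Q_{ab}/n)$, compute the characteristic matrix as a scalar multiple of $\bM^{\otimes 2}$ with $\bM = \bQ - q\one\one^{\top}$, identify $\|\bT^{(2)}\|$ with $\|\bM\|^2$ (times the prefactor) via the Kronecker-product norm, and feed this into Theorem~\ref{thm:lcdf-p2}. The paper leaves the prefactor in unsimplified form and settles for an $O(1/n^2)$ error term where you compute the exact expression $1/(2q(n-q))$, and you explicitly verify $\bT^{(1)} = \zero$ where the paper merely asserts marginal order $\geq 2$; these are cosmetic differences, not substantive ones.
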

\noindent
The threshold \eqref{eq:ks} is a general form of the \emph{Kesten-Stigum (KS) threshold} for SBMs studied extensively in previous literature and shown to be the threshold where numerous algorithms cease to succeed at testing \cite{DKMZ-2011-AsymptoticAnalysisSBM, DKMZ-2011-SBM, BLM-2015-NonBacktrackingSpectrumSBM,AS-2018-SBMAchievability}; see also extensive discussion in \cite{Abbe-2017-SBMReview}.
Theorem~\ref{thm:sbm} thus gives a complementary lower bound to all of these results about concrete algorithms.
It is also known (and discussed in the above references) that, once $k \geq 4$, there are inefficient algorithms that can test below this threshold.
In these cases, Theorem~\ref{thm:sbm} also gives evidence for a \emph{statistical-to-computational gap}, a parameter regime where the testing problem is possible to solve but only by a very slow computation.

\begin{example}
    Consider the \emph{symmetric SBM}, which corresponds to the choice
    \[ Q_{ab} = \left\{ \begin{array}{ll} \alpha & \text{if } a = b, \\ \beta & \text{if } a \neq b \end{array}\right\}, \]
    for some $\alpha, \beta \geq 0$.
    Straightforward calculations show that \eqref{eq:ks} then reduces to the condition
    \[ \frac{(\alpha - \beta)^2}{k(\alpha + (k - 1)\beta)} < 1, \]
    perhaps a more familiar special case of the KS threshold.
    In this case, the result of Theorem~\ref{thm:sbm} was obtained by \cite{BBKMW-2020-SpectralPlantingColoring}, but the general case (to the best of our knowledge) has not been treated before.
    We derive the even more special subcase $k = 2$ of the symmetric SBM from our tools as a warmup in Section~\ref{sec:sbm-warmup}.
\end{example}

\subsubsection{Hypergraph Stochastic Block Models}
We next consider a version of the above results for SBMs defined over hypergraphs rather than graphs.
These results will actually strictly generalize the preceding ones, but we state and prove them separately for the sake of exposition to the reader more familiar with the graph SBM literature.

\begin{definition}[Hypergraph stochastic block model]
    \label{def:hsbm}
    Suppose $k \geq 2$, $p \geq 3$, and $\bQ \in ([0, 1]^{k})^{\otimes p}$ is a symmetric tensor satisfying
    \[ \bQ[\one, \dots, \one, \cdot\,] = \lambda\one. \]
    Define
    \[ q \colonequals \frac{1}{k^p}\sum_{a_1, \dots, a_p = 1}^k Q_{a_1 \cdots a_p} = \frac{1}{k^p}\langle \bQ, \one^{\otimes p} \rangle = \frac{\lambda}{k^{p - 1}}. \]
    Associated to such a tensor, we define the \emph{hypergraph stochastic block model (HSBM)} consisting, for each $n \geq 1$, of two probability measures over $p$-regular hypergraphs on $n$ vertices or, equivalently, their symmetric adjacency tensors, $\bY \in (\{0, 1\}^{n})^{\otimes p}$:
    \begin{enumerate}
    \item Under $\QQ_n$, draw $Y_{i_1 \cdots i_p} \sim \Ber(q\, / \binom{n}{p - 1})$ independently for each $1 \leq i_1 <  \cdots < i_p \leq n$.
    \item Under $\PP_n$, first draw $\bx \in [k]^n$ uniformly at random.
        Then, draw $Y_{i_1 \cdots i_p} \sim \Ber(Q_{x_{i_1}\cdots x_{i_p}}\, / \binom{n}{p - 1})$ independently for each $1 \leq i_1 <  \cdots < i_p \leq n$.
    \end{enumerate}
    We call $\bQ$ the \emph{interaction tensor} of such a model.
\end{definition}

\begin{theorem}[Lower bound for hypergraph stochastic block model]
    \label{thm:hsbm}
    Suppose that $\bQ$ is an interaction tensor as in Definition~\ref{def:hsbm}.
    If
    \begin{equation}
        \max_{j \in \{2, \dots, k\}} |\lambda_j(\bQ[\one, \dots, \one, \cdot, \cdot])|^2 < \frac{k^{p - 1}}{p - 1} \lambda_1(\bQ[\one, \dots, \one, \cdot, \cdot]), \label{eq:hks}
    \end{equation}
    then no sequence of functions of coordinate degree $O(n\, / \log n)$ can strongly separate $\QQ_n$ from $\PP_n$ in the HSBM associated to $\bQ$.
\end{theorem}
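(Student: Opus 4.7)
The plan is to cast the HSBM as a GSBM and invoke Theorem~\ref{thm:lcdf-p2}; the hypothesis \eqref{eq:hks} is precisely the marginal-order-$2$ threshold once we compute the marginal characteristic tensors explicitly. First, I identify $\Omega = \{0,1\}$ and $\mu_{\ba} = \Ber(Q_{\ba}/\binom{n}{p-1})$, so that $\mu_{\avg} = \Ber(q/\binom{n}{p-1})$. The symmetry of $\bQ$ makes the model strongly symmetric, so Assumption~\ref{assum} is satisfied. A direct computation of the Bernoulli likelihood ratios gives, for any $\ba, \bb \in [k]^p$,
\[
  \Ex_{y\sim \mu_{\avg}}\!\!\left[\left(\tfrac{d\mu_\ba}{d\mu_{\avg}}(y)-1\right)\!\left(\tfrac{d\mu_\bb}{d\mu_{\avg}}(y)-1\right)\right]
  = \frac{(Q_\ba-q)(Q_\bb-q)}{\binom{n}{p-1}^2\, p_{\avg}(1-p_{\avg})},
\]
where $p_{\avg} = q/\binom{n}{p-1}$. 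Setting $\bR \colonequals \bQ - q\one^{\otimes p} \in (\RR^k)^{\otimes p}$, this shows that the characteristic tensor factors cleanly as $\bT^{(p)} = \kappa_n \cdot \bR \boxtimes \bR$, where $\boxtimes$ denotes the Kronecker square identifying the $i$-th axes of the two copies of $\bR$ to form the single $(a_i,b_i)$-axis of $\bT^{(p)}$, and $\kappa_n \colonequals 1/(p!\, p_{\avg}(1-p_{\avg})\binom{n}{p-1}^2)$.

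Since contracting a single $(a_i,b_i)$-axis of $\bT^{(p)}$ with $\one \in \RR^{k^2}$ amounts to independently contracting the $i$-th axes of each $\bR$ factor with $\one \in \RR^k$, one obtains
\[
  \bT^{(p-j)} = \frac{\kappa_n}{k^{2j}}\, \bR^{(p-j)} \boxtimes \bR^{(p-j)},
  \qquad
  \bR^{(p-j)} \colonequals \bR[\underbrace{\one,\dots,\one}_{j},\cdot,\dots,\cdot\,] \in (\RR^k)^{\otimes (p-j)}.
\]
The hypothesis $\bQ[\one,\dots,\one,\cdot] = \lambda\one$ and the identity $q = \lambda/k^{p-1}$ together give $\bR^{(1)} = \lambda\one - qk^{p-1}\one = 0$, so $\bT^{(1)} = \bm 0$ and the model has marginal order $p_* \geq 2$. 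Next, the matrix $\bR^{(2)} = \bM - (\lambda/k)\one\one^\top$, where $\bM \colonequals \bQ[\one,\dots,\one,\cdot,\cdot]$, has $\one$ in its kernel and agrees with $\bM$ on the orthogonal complement of $\one$, so its spectrum is exactly $\{\lambda_j(\bM) : j \geq 2\} \cup \{0\}$. The Kronecker factorization then gives $\|\bT^{(2)}\| = \kappa_n k^{-2(p-2)} \|\bR^{(2)}\|^2 = \kappa_n k^{-2(p-2)} \max_{j\geq 2}|\lambda_j(\bM)|^2$. Using $\binom{n}{p-1} \sim n^{p-1}/(p-1)!$ and $p_{\avg}(1-p_{\avg}) \sim q/\binom{n}{p-1}$, the bound $\|\bT^{(2)}\| \leq (1-\epsilon)\frac{k^2}{p(p-1)n^{p-1}}$ demanded by Theorem~\ref{thm:lcdf-p2} reduces, after routine algebra using $q = \lambda/k^{p-1}$, to $\max_{j\geq 2}|\lambda_j(\bM)|^2 \leq (1-\epsilon)\frac{k^{p-1}}{p-1}\lambda_1(\bM)$, which is precisely a strict form of \eqref{eq:hks}.

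It remains to verify the crude bound $\|\bT^{(j)}_n\|_{\inj} \leq C/n^{p-1}$ for $3 \leq j \leq p$. Using the Kronecker factorization and viewing a test vector $\bv \in \RR^{k^2}$ as a Frobenius-unit matrix $\bV \in \RR^{k\times k}$, Cauchy--Schwarz in the two copies of $\bR^{(j)}$ yields
\[
  |\langle \bR^{(j)} \boxtimes \bR^{(j)},\, \bv^{\otimes j}\rangle|
  \;\leq\; \|\bR^{(j)}\|_F^2,
\]
and $\|\bR^{(j)}\|_F \leq k^{p - j/2}/\binom{n}{p-1}$ since $|R_\ba| \leq 1/\binom{n}{p-1}$ and marginalization by $j$ copies of $\one \in \RR^k$ contributes at most $k^{p-j}$. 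Combining with $\kappa_n = \Theta(n^{p-1})$ (from $p_{\avg}(1-p_{\avg})\binom{n}{p-1}^2 \sim q\binom{n}{p-1}$) gives $\|\bT^{(j)}_n\|_{\inj} \leq C_{p,k}/n^{p-1}$ for a constant depending only on $p,k$, and $\bQ$. With both hypotheses of Theorem~\ref{thm:lcdf-p2} verified, that theorem immediately delivers the desired lower bound. The main (mild) obstacle is the injective-norm bookkeeping for $j \geq 3$: the ``paired'' Kronecker square means $\|\bT^{(j)}\|_{\inj}$ is not simply $\|\bR^{(j)}\|_{\inj}^2$, so one must either resort to the Frobenius-norm upper bound as above or invoke a cleaner identity relating the injective norm of $\bR^{(j)} \boxtimes \bR^{(j)}$ to a Schatten-type functional of $\bR^{(j)}$; the Frobenius bound is wasteful but entirely sufficient since the condition in Theorem~\ref{thm:lcdf-p2} only demands an $O(n^{-(p-1)})$ bound with an unspecified constant.
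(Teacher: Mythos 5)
Your proof is correct and follows essentially the same route as the paper's: cast the HSBM as a GSBM with $\Omega = \{0,1\}$, compute the Bernoulli-channel characteristic tensor, observe the Kronecker-square factorization $\bT^{(p)} = \kappa_n\, \bR \boxtimes \bR$ with $\bR = \bQ - q\,\one^{\otimes p}$, check that $\bT^{(1)} = \bm 0$ so marginal order is at least $2$, identify $\bR^{(2)}$ with $\bQ[\one,\dots,\one,\cdot,\cdot]$ minus its Perron component, and plug the resulting $\|\bT^{(2)}\|$ asymptotics into Theorem~\ref{thm:lcdf-p2}. The reduction of the $\|\bT^{(2)}\|$ bound to the Kesten--Stigum inequality \eqref{eq:hks} is the same algebra the paper carries out. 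One genuine service your write-up performs that the paper does not: you explicitly verify the crude injective-norm condition $\|\bT^{(j)}_n\|_{\inj} = O(n^{-(p-1)})$ for $3 \leq j \leq p$ via the Cauchy--Schwarz/Frobenius argument (using that the Kronecker kernel $\bV^{\otimes j}$ has operator norm $\leq \|\bV\|_F^j = 1$). The paper states only $\bT^{(p)}$ and $\bT^{(2)}$ and leaves the intermediate bounds implicit, so your check fills a real (if easy) gap in the exposition.

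One notational slip worth flagging, though it does not affect the conclusion: having defined $\bR \colonequals \bQ - q\,\one^{\otimes p}$ (entries in $[-1,1]$) and $\kappa_n \colonequals 1/\bigl(p!\, p_{\avg}(1-p_{\avg})\binom{n}{p-1}^2\bigr)$, you then write $|R_\ba| \leq 1/\binom{n}{p-1}$ and $\kappa_n = \Theta(n^{p-1})$; under your stated definitions both are off by $\binom{n}{p-1}^{\pm 1}$ respectively (the correct statements are $|R_\ba| \leq 1$ and $\kappa_n = \Theta(n^{-(p-1)})$, since $p_{\avg}(1-p_{\avg})\binom{n}{p-1}^2 \sim q\binom{n}{p-1}$). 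The two errors cancel exactly, so the final bound $\|\bT^{(j)}_n\|_{\inj} = O(n^{-(p-1)})$ and the invocation of Theorem~\ref{thm:lcdf-p2} remain valid; you have simply blurred the distinction between the tensor of raw centered rates $\bQ - q\one^{\otimes p}$ and the tensor of centered probability masses $(\bQ - q\one^{\otimes p})/\binom{n}{p-1}$. Fixing the convention either way makes the two displayed auxiliary facts consistent with the rest of the argument.
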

\noindent
The threshold in \eqref{eq:hks} is again a generalized form of the KS threshold.
The hypergraph versions of these thresholds are newer and not as well-understood as the graph case in the literature, but conjectures and algorithmic results concerning such thresholds have appeared in \cite{ACKZ-2015-SpectralDetectionSparseHypergraphs,PZ-2021-CommunityDetectionSparseHSBM,CEH-2023-NonbacktrackingClusteringHypergraphs,SZ-2024-CommunityDetectionSparseHSBM}.
Evidence of statistical-to-computational gaps for certain choices of parameters has also accumulated \cite{ACKZ-2015-SpectralDetectionSparseHypergraphs,GP-2023-WeakRecoveryHSBM,GP-2024-HypergraphSBM}.
As in the graph case, Theorem~\ref{thm:hsbm} gives evidence both for the optimality of algorithms reaching this generalized KS threshold, and for the existence of statistical-to-computational gaps when inefficient algorithms are known to succeed beyond the KS threshold.

\begin{example}
    As another concrete example, consider the \emph{symmetric HSBM}, which, analogous to the symmetric SBM, has
    \[ Q_{a_1\cdots a_p} = \left\{ \begin{array}{ll} \alpha & \text{if } a_1 = \cdots = a_p, \\ \beta & \text{otherwise} \end{array}\right\}. \]
    In this case, the generalized KS condition \eqref{eq:hks} becomes:
    \[ \frac{(p - 1)(\alpha - \beta)^2}{k^{p - 1}(\alpha + (k^{p - 1} - 1)\beta)} < 1. \]
    To the best of our knowledge, even in this special case lower bounds against LDP or LCDF were not known previously for $p \geq 3$.
    As for the SBM, we give a separate and hands-on analysis of this special case in Section~\ref{sec:hsbm-warmup}.
\end{example}

Lastly, we do not give details here, but, as mentioned earlier, we discuss an example of a model with higher marginal order and therefore with a different profile of tradeoff between LCDF degree and signal-to-noise ratio, namely a model implementing random XOR-SAT formulas in a GSBM, in Section~\ref{sec:xor}.

\subsubsection{Group Problems}

In these next results, we will always consider discrete models associated to a group $G$, where we write $k = |G| > 1$ (which will be the same as the $k$ in a GSBM).
That is because our GSBMs will always be labelled by elements of $G$, and indeed our observations will also always be elements of $G$, so we will have $k = \ell = |G|$.
Both problems we study will have $p = 2$, i.e., binary interactions.

\begin{definition}[Truth-or-Haar synchronization]
    Given a group $G$ as above and a constant $\gamma > 0$, the \emph{truth-or-Haar synchronization model} associated to $G$ consists, for each $n \geq 1$, of the following pairs of probability measures over $\bY \in G^{\binom{n}{2}}$:
    \begin{enumerate}
    \item Under $\QQ_n$, draw $Y_{ij} \sim \Unif(G)$ independently for each $1 \leq i < j \leq n$.
    \item Under $\PP_n$, first draw $x_1, \dots, x_n \sim \Unif(G)$ uniformly at random.
        Then, draw $Y_{ij}$ independently for each $1 \leq i < j \leq n$ by setting it to $g_ig_j^{-1}$ with probability $\frac{\gamma}{\sqrt{n}}$ and setting it to a draw from $\Unif(G)$ with probability $1 - \frac{\gamma}{\sqrt{n}}$.
    \end{enumerate}
\end{definition}

\begin{theorem}[Lower bound for synchronization]
    \label{thm:sync}
    For any finite group $G$, if $\gamma < 1$, then no sequence of functions of coordinate degree $O(n\, / \log n)$ can strongly separate $\QQ_n$ from $\PP_n$ in the truth-or-Haar group synchronization model associated to $G$.
\end{theorem}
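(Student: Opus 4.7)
The plan is to apply Theorem~\ref{thm:lcdf-p2} with $p = 2$ and $k = |G|$, treating the truth-or-Haar model as a GSBM with labels and observations both valued in $G$. The main tasks are: compute the characteristic matrix $\bT^{(2)}$ and its operator norm, verify that the marginal order equals $2$, check Assumption~\ref{assum}, and see that the critical value $\gamma = 1$ is exactly where the hypothesis of Theorem~\ref{thm:lcdf-p2} saturates.

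First I would observe that since $gh^{-1}$ is uniform on $G$ when $(g,h)$ is uniform on $G^2$, we have $\mu_{\avg} = \Unif(G)$, giving the bounded likelihood ratio
\[ \frac{d\mu_{(g,h)}}{d\mu_{\avg}}(y) = \left(1 - \frac{\gamma}{\sqrt n}\right) + \frac{\gamma k}{\sqrt n}\mathbb{1}[y = gh^{-1}]. \]
A direct substitution into the definition of the characteristic tensor then yields
\[ T^{(2)}_{(g_1, h_1),(g_2, h_2)} = \frac{\gamma^2}{2n}\left(k \mathbb{1}[g_1 h_1^{-1} = g_2 h_2^{-1}] - 1\right). \]
Regularity holds because likelihood ratios are bounded; non-triviality holds for any $\gamma > 0$; and weak symmetry holds because the relevant inner product depends on the tuples only through $g_1 h_1^{-1}$ and $g_2 h_2^{-1}$, and the condition $g_1 h_1^{-1} = g_2 h_2^{-1}$ is invariant under simultaneously swapping each pair (which inverts both sides). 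Finally, summing over $(g', h')$ gives $\sum_{(g',h')} \mathbb{1}[g'(h')^{-1} = c] = k$ for every $c \in G$, so the single marginalization $\bT^{(1)}$ vanishes and hence $p_* = 2$.

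Next I would diagonalize $\bT^{(2)}$ by exploiting the group structure. Write $\bT^{(2)} = \frac{\gamma^2}{2n}(kM - J)$, where $J = \one \one^\top$ is the $k^2 \times k^2$ all-ones matrix and $M_{(g_1,h_1),(g_2,h_2)} = \mathbb{1}[g_1 h_1^{-1} = g_2 h_2^{-1}]$. For each $c \in G$, let $u_c \in \RR^{G \times G}$ have $(u_c)_{g,h} = \mathbb{1}[gh^{-1} = c]$; these vectors are pairwise orthogonal with squared norm $k$, partition the identity in the sense that $\sum_c u_c = \one$, and satisfy $M = \sum_c u_c u_c^\top$. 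Therefore $M$ acts as multiplication by $k$ on the $k$-dimensional space $V = \span\{u_c : c \in G\}$ and by $0$ on $V^\perp$, while $J$ acts as $k^2$ on $\span\{\one\} \subseteq V$ and $0$ on $\one^\perp$. Hence $kM - J$ has eigenvalue $k^2$ on the $(k-1)$-dimensional space $V \ominus \span\{\one\}$ and is zero elsewhere, so $\|\bT^{(2)}\| = \gamma^2 k^2/(2n)$.

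Plugging into Theorem~\ref{thm:lcdf-p2}, the hypothesis $\|\bT^{(2)}_n\| \leq (1 - \epsilon)\frac{k^2}{p(p-1)}\frac{1}{n^{p-1}} = (1 - \epsilon)\frac{k^2}{2n}$ reduces to $\gamma^2 \leq 1 - \epsilon$, which holds for some $\epsilon > 0$ precisely when $\gamma < 1$; no conditions on higher marginals are needed since $p = 2$. The conclusion then gives the desired lower bound on LCDF of coordinate degree $O(n / \log n)$. The only non-routine step is the exact spectral calculation for $\bT^{(2)}$, but this is made transparent by the coset structure of $G \times G$ under $(g, h) \mapsto gh^{-1}$: $\bT^{(2)}$ is essentially a rescaled difference of the two natural projections (onto $V$ and onto $\span\{\one\}$), which makes the constant $k^2$ appear and match the Theorem~\ref{thm:lcdf-p2} threshold precisely at $\gamma = 1$.
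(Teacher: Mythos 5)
Your plan is the same as the paper's: write truth-or-Haar synchronization as a GSBM, verify the hypotheses, compute $\bT^{(2)}$ and its operator norm, and feed this into Theorem~\ref{thm:lcdf-p2}. The operator-norm calculation via the orthogonal decomposition $M = \sum_c u_c u_c^\top$ is a clean rephrasing of the paper's observation that $M$ is (up to a permutation of indices) $\bm I_k \otimes \one_k\one_k^\top$; both are essentially the same spectral fact.

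There is, however, an indexing slip in your formula for the characteristic matrix. Per the paper's convention, the entry $T^{(2)}_{(a_1,b_1),(a_2,b_2)}$ is built from the channels $\mu_{(a_1,a_2)}$ and $\mu_{(b_1,b_2)}$. Substituting $a_1 = g_1$, $b_1 = h_1$, $a_2 = g_2$, $b_2 = h_2$, the correct entry is
\[ T^{(2)}_{(g_1,h_1),(g_2,h_2)} \;=\; \frac{\gamma^2}{2n}\bigl(k\,\One[g_1 g_2^{-1} = h_1 h_2^{-1}] - 1\bigr) \;=\; \frac{\gamma^2}{2n}\bigl(k\,\One[g_1^{-1} h_1 = g_2^{-1} h_2] - 1\bigr), \]
not $\frac{\gamma^2}{2n}\bigl(k\,\One[g_1 h_1^{-1} = g_2 h_2^{-1}] - 1\bigr)$. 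Your formula treats $(g_1,h_1)$ and $(g_2,h_2)$ as the two channel tuples rather than as the row and column indices obtained by pairing up corresponding slots; the two conditions coincide exactly when $G$ is abelian, but differ for general $G$. Fortunately this does not propagate: the correct matrix is again of the form $kM - J$ with $M_{(g_1,h_1),(g_2,h_2)} = \One[\phi(g_1,h_1) = \phi(g_2,h_2)]$ for a surjection $\phi(g,h) = g^{-1}h$ onto $G$ with fibers of size $k$, so the same argument (now with $(u_c)_{g,h} = \One[g^{-1}h = c]$) yields $\|\bT^{(2)}\| = \gamma^2 k^2/(2n)$ and the threshold $\gamma < 1$. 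With that index convention corrected, your proof matches the paper's.
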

\noindent
This supports a prediction of \cite{Singer-2011-AngularSynchronization} and, together with the results of \cite[Section 6]{PWBM-2016-PCASpikedMatrixSynchronization}, gives evidence for a statistical-to-computational gap in these models once $|G| \geq 11$, in which case there is an inefficient algorithm that succeeds at testing for some $\gamma < 1$.

The following model is a curious variant of the above which, while its algebraic structure is quite different, at least for one class of $G$ shares precisely the same analysis as the synchronization problem.

\begin{definition}[Truth-or-Haar noisy sumset]
    Given a group $G$ as above and a constant $\gamma > 0$, the \emph{truth-or-Haar noisy sumset model} associated to $G$ is identical to the synchronization model, but with $g_ig_j^{-1}$ replaced by $g_ig_j$ (in the definition of $\PP_n$).
\end{definition}

\begin{theorem}[Lower bound for noisy sumset]
    \label{thm:sumset}
    For any finite \underline{abelian} group $G$, if $\gamma < 1$, then no sequence of functions of coordinate degree $O(n\, / \log n)$ can strongly separate $\QQ_n$ from $\PP_n$ in the truth-or-Haar noisy sumset model associated to $G$.
\end{theorem}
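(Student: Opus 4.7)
The plan is to specialize Theorem~\ref{thm:lcdf-p2} to this model with $p = 2$, $k = |G|$, and $\Omega = G$, where the channel is $\mu_{(a,b)} = \frac{\gamma}{\sqrt n}\delta_{ab} + (1 - \frac{\gamma}{\sqrt n})\Unif(G)$. First I would verify the structural assumptions of a GSBM: averaging over $(a,b) \in G^2$ gives $\mu_{\avg} = \Unif(G)$, regularity is automatic since each likelihood ratio is bounded, and the abelian hypothesis yields $\mu_{(a,b)} = \mu_{(b,a)}$ and hence strong (and therefore weak) symmetry. This is the essential use of commutativity: in a non-abelian group one can exhibit $a_1, b_1, a_2, b_2$ (for instance inside $S_3$) with $a_1 b_1 = a_2 b_2$ but $b_1 a_1 \neq b_2 a_2$, after which, since the characteristic matrix below depends only on the products $a_i b_i$, weak symmetry genuinely fails.

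Next I would compute the characteristic matrix. Writing
\[ \frac{d\mu_{(a,b)}}{d\mu_{\avg}}(y) - 1 = \frac{\gamma}{\sqrt n}\bigl(k\One[y = ab] - 1\bigr) \]
and taking the expectation under $y \sim \Unif(G)$, a short calculation yields
\[ T_{(a_1,b_1),(a_2,b_2)} = \frac{\gamma^2}{2n}\bigl(k\One[a_1 b_1 = a_2 b_2] - 1\bigr). \]
Since the product map $G^2 \to G$ has fibers of size exactly $k$, summing the right-hand side over $(a_2, b_2)$ gives zero, so $\bT^{(1)} = \bm 0$ and the model has marginal order $p_* = 2$.

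The remaining and main step is bounding $\|\bT^{(2)}\|$. I would write $\bT^{(2)} = \frac{\gamma^2}{2n}(k\bM - \bJ)$, where $\bM_{(a_1,b_1),(a_2,b_2)} = \One[a_1 b_1 = a_2 b_2]$ and $\bJ$ is the all-ones matrix, and observe that the indicator vectors $\one_g \in \RR^{k^2}$ of the fibers $\{(a,b) : ab = g\}$ are pairwise orthogonal, have squared norm $k$, and sum to $\one_{k^2}$, so that $\bM = \sum_{g \in G} \one_g \one_g^\top$. Simultaneous diagonalization then shows that $k\bM - \bJ$ annihilates $\mathrm{span}(\one_{k^2})$ and $\ker(\bM)$, while acting as $k^2 \cdot I$ on the $(k-1)$-dimensional orthogonal complement of $\one_{k^2}$ inside $\mathrm{range}(\bM)$; hence $\|\bT^{(2)}\| = \gamma^2 k^2 / (2n)$. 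For $\gamma < 1$, choosing $\epsilon > 0$ with $\gamma^2 \leq 1 - \epsilon$ gives $\|\bT^{(2)}\| \leq (1 - \epsilon)\, k^2 / (p(p-1)\, n^{p - 1})$, and the hypothesis on $\|\bT^{(j)}_n\|_{\inj}$ for $j \geq 3$ is vacuous since $p = 2$, so Theorem~\ref{thm:lcdf-p2} applies and yields the claim. The main obstacle is not the spectral calculation---which works group-theoretically for any finite $G$, since the fibers of multiplication always have size $k$---but the verification of weak symmetry, which is precisely what requires commutativity.
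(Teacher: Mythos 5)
Your overall plan matches the paper's: specialize Theorem~\ref{thm:lcdf-p2} with $p = p_* = 2$, compute the characteristic matrix, and bound its operator norm by $(1-\epsilon)\frac{k^2}{2n}$. Your opening observation that the abelian sumset model is in fact \emph{strongly} symmetric (since $\mu_{(a,b)} = \mu_{(b,a)}$ because $ab = ba$) is cleaner than the paper's verification, which only checks that $\bT$ is symmetric (i.e., weak symmetry); and your reason for why abelianness is needed is in spirit correct.

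There is, however, a real error in your formula for the characteristic matrix. In the paper's definition, the entry $T_{(a_1,b_1),(a_2,b_2)}$ pairs the $a$-coordinates to form one channel label $(a_1,a_2)$ and the $b$-coordinates to form the other, $(b_1,b_2)$. The correct formula is therefore
\[ T_{(a_1,b_1),(a_2,b_2)} = \frac{\gamma^2}{2n}\bigl(k\,\One\{a_1 a_2 = b_1 b_2\} - 1\bigr), \]
with indicator $\One\{a_1 a_2 = b_1 b_2\}$, \emph{not} $\One\{a_1 b_1 = a_2 b_2\}$ as you wrote. What you computed is the Gram matrix of the centered likelihood ratios $\bar L_{\ba}$ indexed directly by channel labels; the paper's $\bT$ is a partial transpose (reshuffle) of that Gram matrix, and it is the reshuffled version whose top eigenvalue enters the bound $\langle \bT, \bz^{\otimes 2}\rangle$ in Lemma~\ref{lem:cadv}. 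Consequently $\bT$ does \emph{not} have the positive-semidefinite identity block-diagonal structure you describe. As the paper's proof shows, after a simultaneous permutation of rows and columns it becomes (up to scaling and subtracting $\one\one^\top$) the matrix $\widetilde{\bm I}_k \otimes \one_k\one_k^\top$ with $\widetilde{\bm I}_k$ the anti-identity, which is indefinite, with eigenvalues $\pm k$. Reaching this structure genuinely uses abelianness via the rewriting $a_1 a_2 = b_1 b_2 \Leftrightarrow a_1^{-1} b_1 = (a_2^{-1} b_2)^{-1}$; without commutativity $\bT$ is not even symmetric. So your closing claim that the spectral calculation ``works group-theoretically for any finite $G$'' is true of the matrix you wrote down but false of the actual characteristic matrix.

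It happens that both matrices have operator norm exactly $\gamma^2 k^2/(2n)$ (the anti-identity has operator norm $1$ too), so your final numerical threshold is right and the theorem does follow; the marginal-order check also survives the indexing error. But the matrix you diagonalized is not the object Theorem~\ref{thm:lcdf-p2} asks about, and in a different model the two matricizations would not have the same norm.
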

\noindent
The requirement that $G$ be abelian is due to the requirement of our methods that a GSBM be weakly symmetric, which otherwise would not be satisfied.
It is an interesting question whether the same result will necessarily hold for arbitrary $G$; though it seems that it should by analogy with the synchronization problem, it also seems that the analysis would require different and more problem-specific calculations.

Lastly, in Section~\ref{sec:kbk} we will discuss how our technical results can be used to sharpen the recent results of \cite{KBK-2024-LowDegreeGaussianSynchronization} on a related synchronization problem but with a quite different noise model of adding Gaussian noise to a certain matrix association to the synchronization task.
We can improve the allowable degree in this result from the original $D(n) \sim n^{1/3}$ to $D(n) \sim n$.

\subsubsection{Channel Calculus}
\label{sec:channel-calc}
Finally, in the spirit of analogous results derived in \cite{Kunisky-2024-LCDA1}, we give two general principles describing how our lower bounds against LCDF transform under certain deformations of a GSBM.

\begin{definition}[Channel resampling]
    \label{def:resampling}
    Consider a GSBM as in Definition~\ref{def:gsbm}, associated to a family of measures $(\mu_{\ba})_{\ba \in [k]^p}$ whose average is $\mu_{\avg}$.
    The \emph{$\eta$-resampling} of this GSBM is that associated instead to the family $((1 - \eta) \mu_{\ba} + \eta \mu_{\avg})_{\ba \in [k]^p}$.
    In words, $\QQ$ of the $\eta$-resampled GSBM remains the same, while in order to sample from $\PP$ under the $\eta$-resampled GSBM, we first sample from the original GSBM, and then for each observation independently with probability $\eta$ independently replace it with a draw from $\mu_{\avg}$ (which is the same as the distribution that observation would have under $\QQ$).
\end{definition}

\begin{theorem}[Resampling and characteristic tensors]
    If $\bT^{(j)}$ is a given marginalization of the characteristic tensor of an GSBM, then the same marginalization of the characteristic tensor of the $\eta$-resampling of that GSBM is $(1 - \eta)^2\,\bT^{(j)}$.
\end{theorem}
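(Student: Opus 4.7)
The plan is to trace through the definitions directly and exploit the affine form of the resampling. The key observation is that resampling preserves the average measure $\mu_{\avg}$, which in turn means both characteristic tensors live in the same $L^2$ space and can be compared pointwise.

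First I would verify that the average measure is unchanged. For the $\eta$-resampled family $\wtil{\mu}_{\ba} = (1 - \eta)\mu_{\ba} + \eta \mu_{\avg}$, averaging over $\ba \in [k]^p$ gives
\[ \wtil{\mu}_{\avg} = \frac{1}{k^p} \sum_{\ba} \wtil{\mu}_{\ba} = (1 - \eta) \mu_{\avg} + \eta \mu_{\avg} = \mu_{\avg}, \]
so the reference measure with respect to which likelihood ratios are computed is the same for both GSBMs.

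Next I would compute the new likelihood ratios. By linearity of Radon-Nikodym derivatives,
\[ \frac{d\wtil{\mu}_{\ba}}{d\mu_{\avg}} = (1 - \eta)\frac{d\mu_{\ba}}{d\mu_{\avg}} + \eta, \]
and subtracting $1$ on both sides yields the crucial identity
\[ \frac{d\wtil{\mu}_{\ba}}{d\mu_{\avg}} - 1 = (1 - \eta) \left(\frac{d\mu_{\ba}}{d\mu_{\avg}} - 1\right). \]
Substituting this into the definition of the characteristic tensor, each of the two factors inside the expectation picks up a factor of $(1 - \eta)$, so $\wtil{\bT}^{(p)} = (1 - \eta)^2 \bT^{(p)}$ entrywise. (Regularity of the original GSBM is inherited by the resampling, since $d\wtil{\mu}_{\ba}/d\mu_{\avg}$ is a convex combination of bounded perturbations of elements of $L^2(\mu_{\avg})$.)

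Finally, to pass to the marginalizations, I would note that the map $\bT^{(p)} \mapsto \bT^{(p - j)}$ defined via contraction with $\one$ vectors and division by $k^{2j}$ is linear in $\bT^{(p)}$. Therefore the scalar factor $(1 - \eta)^2$ passes through, giving $\wtil{\bT}^{(p - j)} = (1 - \eta)^2 \bT^{(p - j)}$ for every $0 \leq j \leq p$. There is no real obstacle here; the entire argument rests on the affine structure of the resampling and the linearity of the characteristic-tensor construction, with the only thing requiring any care being the preservation of $\mu_{\avg}$, which is why the two likelihood-ratio computations can be placed side by side.
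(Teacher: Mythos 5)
Your proof is correct, and since the paper explicitly omits the proof (``Both results follow from trivially simple calculations with characteristic tensors, and we will omit the very short proofs''), there is nothing to compare against except what the author must have intended --- which is exactly what you wrote. The chain of observations --- $\mu_{\avg}$ is a fixed point of resampling, the centered likelihood ratio $d\wtil{\mu}_{\ba}/d\mu_{\avg} - 1$ scales by $(1-\eta)$, the characteristic tensor is a quadratic form in these centered ratios so it scales by $(1-\eta)^2$, and marginalization is linear --- is precisely the ``trivially simple calculation'' alluded to, and you have verified each link in the chain.

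One small remark: the preservation of $\mu_{\avg}$ is the linchpin, as you note, because the characteristic tensor of the resampled model is defined with likelihood ratios against $\wtil{\mu}_{\avg}$, not $\mu_{\avg}$; had these differed, the entrywise comparison would not go through so cleanly. You flagged this explicitly, which is the right thing to do. The regularity aside is also appropriate --- it confirms the new characteristic tensor is well-defined --- though strictly speaking the centered-ratio identity already shows $d\wtil{\mu}_{\ba}/d\mu_{\avg} \in L^2(\mu_{\avg})$ directly, without needing to invoke convex combinations of bounded perturbations.
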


In the same vein, and directly akin to one of the results in \cite{Kunisky-2024-LCDA1} for continuous signals, we may also understand the effect of the following operation on a channel.
\begin{definition}[Channel censorship]
    \label{def:censorship}
    Consider a GSBM as in Definition~\ref{def:gsbm}, associated to a family of measures $(\mu_{\ba})_{\ba \in [k]^p}$, whose average is $\mu_{\avg}$, over a measurable space $\Omega$.
    Let ``\,$\bullet$'' denote a new symbol that does not belong to $\Omega$.
    The \emph{$\eta$-censorship} of this GSBM is that associated instead to the family $((1 - \eta) \mu_{\ba} + \eta \delta_{\bullet})_{\ba \in [k]^p}$, defined on the measurable space $\Omega \sqcup \{\bullet\}$, where $\delta_{\bullet}$ denotes the Dirac mass on this new element.
    In words, to sample from either $\QQ$ or $\PP$ under the $\eta$-censored GSBM, we first sample from the corresponding distribution in the original GSBM, and then for each observation independently with probability $\eta$ replace it with $\bullet$.
\end{definition}
\noindent
Remarkably, we find that censorship and resampling have almost the same effect on the characteristic tensor, and thus the aforementioned effective SNR, of a GSBM.
As we would expect, resampling at a given rate reduces the SNR more than censorship does, but given any rate of resampling, there is a (greater) rate of censorship that produces a model that, from the point of view of our analysis, is equivalent to the resampled one.

\begin{theorem}[Censorship and characteristic tensors]
    If $\bT^{(j)}$ is the characteristic tensor of a given marginalization of a GSBM, then the same marginalization of the characteristic tensor of the $\eta$-censorship of that GSBM is $(1 - \eta)\,\bT^{(j)}$.
\end{theorem}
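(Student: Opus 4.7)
The plan is to compute the characteristic tensor of the $\eta$-censored GSBM directly from the definition and observe that it is a scalar multiple of the original, after which the claim for marginalizations follows because marginalization is linear in the top-order characteristic tensor.

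First I would set up notation: write $\widetilde{\mu}_{\ba} = (1 - \eta)\mu_{\ba} + \eta \delta_{\bullet}$ for the censored channels on $\Omega \sqcup \{\bullet\}$, and observe that averaging over $\ba \in [k]^p$ commutes with the affine combination so that $\widetilde{\mu}_{\avg} = (1 - \eta)\mu_{\avg} + \eta \delta_{\bullet}$. The key computation is the likelihood ratio $d\widetilde{\mu}_{\ba}/d\widetilde{\mu}_{\avg}$: on $\Omega$, both measures have densities scaled by the same factor $(1 - \eta)$ relative to $\mu_{\avg}$, so the ratio equals $d\mu_{\ba}/d\mu_{\avg}$; at the point $\bullet$, both $\widetilde{\mu}_{\ba}$ and $\widetilde{\mu}_{\avg}$ assign mass exactly $\eta$, so the ratio equals $1$. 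Thus
\[
\frac{d\widetilde{\mu}_{\ba}}{d\widetilde{\mu}_{\avg}}(y) - 1 \;=\; \begin{cases} \dfrac{d\mu_{\ba}}{d\mu_{\avg}}(y) - 1 & \text{if } y \in \Omega, \\[0.3em] 0 & \text{if } y = \bullet. \end{cases}
\]

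Next I would substitute this into the entry $\widetilde{T}^{(p)}_{(a_1,b_1),\dots,(a_p,b_p)} = \frac{1}{p!}\Ex_{y \sim \widetilde{\mu}_{\avg}}[(\frac{d\widetilde{\mu}_{\ba}}{d\widetilde{\mu}_{\avg}} - 1)(\frac{d\widetilde{\mu}_{\bb}}{d\widetilde{\mu}_{\avg}} - 1)]$. The integrand vanishes at $\bullet$, so only the $\Omega$-part of $\widetilde{\mu}_{\avg}$ contributes; that part is exactly $(1 - \eta)\mu_{\avg}$. Pulling out the factor $(1 - \eta)$ gives
\[
\widetilde{T}^{(p)}_{(a_1,b_1),\dots,(a_p,b_p)} \;=\; (1 - \eta)\cdot T^{(p)}_{(a_1,b_1),\dots,(a_p,b_p)},
\]
so $\widetilde{\bT}^{(p)} = (1 - \eta)\bT^{(p)}$.

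Finally, for a marginalization of order $j$, by the definition $\bT^{(p - j)} = \frac{1}{k^{2j}}\bT^{(p)}[\one, \dots, \one, \cdot, \dots, \cdot]$ is a linear function of $\bT^{(p)}$, so the same overall factor $(1 - \eta)$ transfers to every marginal tensor, giving $\widetilde{\bT}^{(j)} = (1 - \eta)\bT^{(j)}$ for all $j$. The only substantive step is the likelihood ratio computation, and the main thing to be careful about is that when passing from the expectation with respect to $\mu_{\avg}$ to one with respect to $\widetilde{\mu}_{\avg}$ one must remember that the reference measure itself has changed, which accounts for exactly one factor of $(1 - \eta)$ (rather than the $(1 - \eta)^2$ one might naively expect from squaring the likelihood increments, as happens in the resampling case where the same reference measure is used and both factors are genuinely multiplied by $(1 - \eta)$).
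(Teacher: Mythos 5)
Your proof is correct, and it is exactly the ``trivially simple calculation with characteristic tensors'' that the paper alludes to before declining to write it out: compute the censored likelihood ratio, note it vanishes at $\bullet$ and agrees with the original on $\Omega$, then account for the single factor of $(1-\eta)$ coming from the change of reference measure, and finish by the linearity of marginalization. Your closing remark correctly identifies the contrast with resampling (where the centered likelihood ratio itself is scaled by $(1-\eta)$ while the reference measure is unchanged, producing $(1-\eta)^2$), which is the one subtlety worth flagging.
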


Both results follow from trivially simple calculations with characteristic tensors, and we will omit the very short proofs.
Even so, in light of our general results above, these both can be useful for working with specific models.
For instance, the group synchronization and sumset models are just resamplings of a particular model where group sums or differences are observed directly without noise.
And, these results together with our analysis of SBMs allow us to immediately obtain various lower bounds for censored SBMs, as studied in \cite{ABBS-2014-ExactRecoveryCensoredSBM,SLKZ-2015-SpectralDetectionCensoredSBM}.

\section{Preliminaries}

\subsection{Notation}

The asymptotic notations $o(\cdot), O(\cdot), \omega(\cdot), \Omega(\cdot), \Theta(\cdot)$ have their usual meanings and always refer to the limit $n \to \infty$.
When these symbols have tildes on top (like $\widetilde{\Theta}$), then polylogarithmic factors in $n$ are suppressed.

For a symmetric $k \times k$ matrix $\bM$, $\lambda_1(\bM) \geq \cdots \geq \lambda_k(\bM)$ are the ordered eigenvalues of $\bM$.

$\Ber(p)$ denotes the Bernoulli probability measure of a random variable equal to 1 with probabilit $p$ and 0 with probability $1 - p$.
$\Unif(S)$ denotes the uniform probability measure over a finite set $S$.
For two probability measures $\mu, \rho$ and $\eta \in [0, 1]$, $\eta \mu + (1 - \eta) \rho$ denotes the corresponding mixture probability measure.

\subsection{Sharp Vector Bernstein Inequality}

We start with a seemingly unrelated topic, a specific version of a Bernstein-type concentration inequality over vectors.
The point of this is that we will need to understand the concentration and moments of quantities like $\|\bz - \EE \bz\|^2$, where $\bz \in \NN^k$ is the vector of numbers of elements of $[n]$ assigned each label from $[k]$ in a GSBM.
The prior work \cite{KBK-2024-LowDegreeGaussianSynchronization} which arrives at a similar issue handled this by direct combinatorial means.
We will be able to improve and simplify that analysis by instead understanding this via such a Bernstein inequality.

The kind of inequality that we need is known, if somewhat implicitly, as we will discuss below.
However, we give a self-contained proof because a rather fine detail of this particular result will be important for us.
In general, like the scalar Bernstein inequality, vector Bernstein inequalities state that $\|\sum_{i = 1}^n \bv_i\|$ for i.i.d.\ centered random vectors $\bv_i$ has subgaussian tails up to a certain size of fluctuation and exponential tails beyond that size.
It will be crucial in several of our applications (both sharpening the results of \cite{KBK-2024-LowDegreeGaussianSynchronization} and in our analysis of models with marginal order 2 and thus having sharp computational thresholds) that we have sharp control over the variance proxy in this subgaussian bound on ``somewhat large'' deviations.
As we discuss after the proof, this result achieves that, while similar results obtained by other means do not.

\begin{lemma}
    \label{lem:vec-bern}
    Let $\bv_1, \dots, \bv_n \in \RR^d$ be independent random vectors such that:
    \begin{enumerate}
    \item $\EE \bv_i = 0$ for all $i$,
    \item $\|\Cov(\bv_i)\| \leq \sigma^2$ for all $i$, and
    \item $\|\bv_i\| \leq M$ for all $i$, almost surely.
    \end{enumerate}
    Then, for any $\epsilon \in (0, 1)$, we have the tail bound
    \[ \PP\left[\left\|\sum_{i = 1}^n \bv_i \right\| \geq t\right] \leq \left(1 + \frac{2}{\epsilon}\right)^d \exp\left(-\frac{t^2}{2\frac{\sigma^2}{(1 - \epsilon)^2}n + \frac{2}{3}\frac{M}{1 - \epsilon}t}\right). \]
\end{lemma}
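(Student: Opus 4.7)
The plan is a standard $\epsilon$-net reduction: since $\|\bw\| = \sup_{\|\bu\|=1} \langle \bu, \bw \rangle$, I will discretize the unit sphere of $\RR^d$ and apply the scalar Bernstein inequality along each direction in the net. Concretely, fix an $\epsilon$-net $\mathcal{N}$ of the unit sphere $S^{d-1} \subset \RR^d$ of cardinality $|\mathcal{N}| \leq (1 + 2/\epsilon)^d$; this volumetric bound is classical. The elementary net approximation yields, for any $\bw \in \RR^d$, the inequality $\|\bw\| \leq (1-\epsilon)^{-1} \max_{\bu \in \mathcal{N}} \langle \bu, \bw \rangle$ (choose $\bu \in \mathcal{N}$ approximating $\bw / \|\bw\|$ to within $\epsilon$ in norm and expand the inner product). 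A union bound then gives
\begin{equation*}
    \PP\left[\Big\|\sum_{i=1}^n \bv_i\Big\| \geq t\right] \leq \sum_{\bu \in \mathcal{N}} \PP\left[\sum_{i=1}^n \langle \bu, \bv_i \rangle \geq (1-\epsilon) t\right].
\end{equation*}

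Next I apply the scalar Bernstein inequality to each summand on the right-hand side. For a fixed unit $\bu$, the scalars $X_i := \langle \bu, \bv_i \rangle$ are independent with $\EE X_i = 0$, variance $\bu^{\top} \Cov(\bv_i)\, \bu \leq \sigma^2$ by the operator-norm hypothesis, and $|X_i| \leq \|\bv_i\| \leq M$ almost surely by Cauchy--Schwarz. The classical scalar Bernstein bound at threshold $s$ reads $\PP[\sum_i X_i \geq s] \leq \exp(-s^2 / (2 \sigma^2 n + \tfrac{2}{3} M s))$. Substituting $s = (1-\epsilon) t$, dividing numerator and denominator by $(1-\epsilon)^2$ to isolate $t^2$ in the numerator, and then multiplying by $|\mathcal{N}|$ yields precisely the claimed tail.

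The main obstacle is not any single calculation but rather the bookkeeping needed to preserve the sharp variance proxy $\sigma^2/(1-\epsilon)^2$. Many off-the-shelf vector or matrix Bernstein inequalities --- for example those derived through matrix exponential trace inequalities --- replace $\sigma^2$ by a looser factor independent of $\epsilon$ (such as a constant multiple of $\Tr(\Cov)$, or an $e^2$ factor from truncation), and in our later applications that constant cannot be absorbed. The crucial move is therefore to apply scalar Bernstein directly \emph{at the rescaled threshold} $(1-\epsilon) t$, rather than bounding the net loss externally, so that the $(1-\epsilon)^{-2}$ factor lands inside the denominator of the exponent where it can be driven arbitrarily close to $1$ at the sole price of the net cardinality $(1 + 2/\epsilon)^d$. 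Every other step is routine.
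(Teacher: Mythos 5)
Your proposal is correct and follows essentially the same route as the paper: discretize the unit sphere by an $\epsilon$-net, translate $\|\sum_i \bv_i\| \geq t$ into a one-dimensional event at threshold $(1-\epsilon)t$ via the net approximation, apply scalar Bernstein to each $\langle \bu, \bv_i\rangle$, and take the union bound over the net. Your closing remark about why the $(1-\epsilon)^{-2}$ must land inside the denominator rather than as an external prefactor also matches the paper's stated motivation for proving the lemma this way instead of invoking symmetrization or Azuma-type bounds, which would replace $\|\Cov(\bv_i)\|$ with $\Tr(\Cov(\bv_i))$.
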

\begin{proof}
    Let $\sX$ be an $\epsilon$-net of the unit sphere in $\RR^d$.
    As is well-known, one may take $|\sX| \leq (1 + \frac{2}{\epsilon})^d$ (see, e.g., \cite[Corollary 4.2.13]{Vershynin-2018-HDP}).
    For any $\bv \in \RR^d$, we then have $\max_{\bx \in \sX}\langle \bv, \bx \rangle \leq \|\bv\|$, as well as, for the $\by \in \sX$ for which $\|\by - \frac{\bv}{\|\bv\|\|}\| \leq \epsilon$, that
    \begin{align*}
      \max_{\bx \in \sX}\langle \bv, \bx \rangle
      &\geq \langle \by, \bv \rangle \\
      &= \|\bv\| + \left\langle \by - \frac{\bv}{\|\bv\|}, \bv \right\rangle \\
      &\geq \|\bv\| - \epsilon \|\bv\|,
    \end{align*}
    and thus in summary we have
    \[ \max_{\bx \in \sX}\langle \bv, \bx \rangle \leq \|\bv\| \leq \frac{1}{1 - \epsilon}\max_{\bx \in \sX}\langle \bv, \bx \rangle. \]

    Taking a union bound, we then have
    \begin{align*}
      \PP\left[\left\|\sum_{i = 1}^n \bv_i \right\| \geq t\right]
      &\leq \PP\left[\max_{\bx \in \sX} \left\langle \sum_{i = 1}^n \bv_i, \bx \right\rangle \geq (1 - \epsilon) t\right] \\
      &\leq \sum_{\bx \in \sX} \PP\left[\sum_{i = 1}^n \langle \bv_i, \bx \rangle \geq (1 - \epsilon) t\right]
        \intertext{where the $\langle \bv_i, \bx \rangle$ are independent centered scalar random variables, which are bounded by $|\langle \bv_i, \bx \rangle| \leq \|\bv_i\| \leq M$ almost surely, and which have variance $\Var[\langle \bv_i, \bx \rangle] = \bx^{\top}\Cov(\bv_i)\bx \leq \|\Cov(\bv_i)\| \leq \sigma^2$. Thus, the scalar Bernstein inequality applies, giving}
      &\leq |\sX| \exp\left(-\frac{t^2}{2\frac{\sigma^2}{(1 - \epsilon)^2}n + \frac{2}{3}\frac{M}{1 - \epsilon}t}\right),
    \end{align*}
    and plugging in the bound for $|\sX|$ completes the proof.
\end{proof}

One simple other way to obtain such a result is to use symmetrization together with the Khintchine-Kahane inequality to show that $\|\sum_{i = 1}^n \bv_i\|$ has suitably decaying moments to show a similar inequality.
Another approach is to construct and analyze the Doob martingale associated to this function of the independent random variables $\bv_1, \dots, \bv_n$, using the Azuma-Hoeffding inequality or similar tools.
Both of these approaches, though, end up with a bound where $\sigma^2 = \|\Cov(\bv_i)\|$ is replaced by $\Tr(\Cov(\bv_i))$, which can be larger by up to a factor of $d$.
This would be unacceptable for our purposes, so we need specifically the above version.
Such results have appeared in the literature before; e.g., they are mentioned in \cite{WWR-2023-SelfNormalizedConcentrationVector,MTR-2024-EmpiricalBernsteinBanachSpace}, but we hope to draw attention to this argument in the above simple and straightforward setting.

\subsection{Tails and Moments of Pearson's $\chi^2$ Statistic}
\label{sec:pearson}

We now give the application of the above inequality to certain random variables constructed from multinomial vectors.
It turns out, though this connection will not play an important role for us, that these equivalently have the distribution of \emph{Pearson's $\chi^2$ statistics} from classical statistics theory.
\begin{definition}
    We denote by $\Mult(n, d)$ the multinomial distribution, the law of the random vector $(z_1, \dots, z_d) \in \ZZ_{\geq 0}^d$ giving the number of balls in each of $d$ bins after throwing $n$ balls into a bin independently and uniformly at random.
\end{definition}

\begin{definition}
    For $\bz \sim \Mult(n, d)$, we denote by $\chi^2_{\Pear}(n, d)$ the \emph{Pearson's $\chi^2$ distribution}, the law of the random variable $\frac{d}{n}\sum_{i = 1}^d (z_i - \frac{n}{d})^2 = \frac{d}{n}\|\bz - \EE \bz\|^2$.
\end{definition}

It is a classical fact following from basic vector-valued central limit theorems that, for fixed $d$ and $n \to \infty$, we have the convergence in distribution
\[ \sqrt{\frac{d}{n}} \left(\bm z - \frac{n}{d}\one\right) \xto{\text{(law)}} \sN\left(\bm 0, \bm I_d - \frac{1}{d}\one_d\one_d^{\top}\right), \]
where the covariance is the orthogonal projection to the orthogonal complement of the $\one_d$ direction.
In particular, $X \sim \chi^2_{\Pear}$ then converges in distribution to $\chi^2(d - 1)$, the $\chi^2$ law with $d - 1$ degrees of freedom.

Our goal here will be to show that, even non-asymptotically, the tails and moments of $\chi^2_{\Pear}$ resemble those of $\chi^2(d - 1)$.
While the arguments are simple once we are aware of the sharp vector Bernstein inequality of Lemma~\ref{lem:vec-bern}, to the best of our knowledge these kinds of results have not appeared in the statistics literature before, even though the accuracy of the approximation $\chi^2_{\Pear} \approx \chi^2(d - 1)$ has been discussed at length \cite{GGM-1970-NumericalDistributionPearson,LSW-1984-MomentsPearsonDistribution,HG-1986-PoissonChiSquaredPearson}.

\begin{lemma}[Right tail of $\chi^2_{\Pear}$]
    \label{lem:pear-right-tail}
    For any $n, d \geq 1$, $t \geq 0$, and $\epsilon \in (0, 1)$,
    \[ \Px_{X \sim \chi^2_{\Pear}(n, d)}[X \geq t] \leq \left(1 + \frac{2}{\epsilon}\right)^d \exp\left(-\frac{\frac{1}{2}(1 - \epsilon)^2t}{1 + \frac{1 - \epsilon}{3}\sqrt{\frac{d - 1}{n}}\sqrt{t}}\right). \]
\end{lemma}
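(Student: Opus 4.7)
The plan is to realize $\|\bz - \EE\bz\|$ as the norm of a sum of i.i.d.\ centered bounded vectors and then directly apply the sharp vector Bernstein inequality of Lemma~\ref{lem:vec-bern}. Concretely, I would write $\bz = \sum_{i = 1}^n \be_{X_i}$ where $X_1, \dots, X_n \sim \Unif([d])$ i.i.d., so that $\bz - \EE \bz = \sum_{i = 1}^n \bv_i$ with $\bv_i \colonequals \be_{X_i} - \frac{1}{d} \one$. Each $\bv_i$ is centered, bounded by $\|\bv_i\| = \sqrt{1 - 1/d} = \sqrt{(d - 1)/d}$, and has covariance $\Cov(\bv_i) = \frac{1}{d}\bI - \frac{1}{d^2}\one\one^{\top}$ whose operator norm is exactly $\frac{1}{d}$. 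Thus the hypotheses of Lemma~\ref{lem:vec-bern} are met with $M = \sqrt{(d - 1)/d}$ and $\sigma^2 = 1/d$.

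Next, the event $\{X \geq t\}$ under $X \sim \chi^2_{\Pear}(n, d)$ is exactly $\{\|\bz - \EE \bz\| \geq \sqrt{nt/d}\}$. So I would set $s \colonequals \sqrt{nt/d}$ in Lemma~\ref{lem:vec-bern}, giving
\[ \Px[X \geq t] \leq \left(1 + \frac{2}{\epsilon}\right)^d \exp\left(-\frac{nt/d}{\frac{2n/d}{(1 - \epsilon)^2} + \frac{2}{3}\frac{\sqrt{(d - 1)/d}}{1 - \epsilon}\sqrt{nt/d}}\right). \]
It then remains only to simplify the exponent. Multiplying numerator and denominator by $\frac{(1 - \epsilon)^2 d}{2n}$, the numerator becomes $\frac{1}{2}(1 - \epsilon)^2 t$, and the second term in the denominator becomes $\frac{(1 - \epsilon) d}{3n} \sqrt{(d - 1)/d} \sqrt{nt/d} = \frac{1 - \epsilon}{3}\sqrt{(d - 1)/n}\,\sqrt{t}$, yielding the stated bound.

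This proof is essentially a direct application; there is no real obstacle once Lemma~\ref{lem:vec-bern} is in hand. The one subtle point to flag is exactly the one the author emphasizes in the text after Lemma~\ref{lem:vec-bern}: it is crucial that the vector Bernstein bound have variance proxy $\|\Cov(\bv_i)\| = 1/d$ rather than $\Tr(\Cov(\bv_i)) = (d - 1)/d$, since otherwise the Gaussian part of the bound would degrade by a factor of $d$ and the resulting tail would no longer match that of the limiting $\chi^2(d - 1)$ distribution. It is precisely this sharpness that makes the proof yield the desired $\chi^2$-like tail rather than a naïve dimension-dependent one.
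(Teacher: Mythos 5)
Your proposal is correct and essentially identical to the paper's own proof: both realize the multinomial vector as a sum of i.i.d.\ centered indicator vectors, compute $M = \sqrt{(d-1)/d}$ and $\|\Cov(\bv_i)\| = 1/d$, and apply Lemma~\ref{lem:vec-bern} at threshold $\sqrt{nt/d}$, followed by the same algebraic simplification. Your remark about the necessity of the operator norm (rather than trace) variance proxy is precisely the point the paper emphasizes.
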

\noindent
We note that when $d$ is fixed, $\epsilon$ is small, and $t = o(n)$, then we approximately recover the tail behavior of a $\chi^2$ distribution, whose density is proportional to $\exp(-x / 2)$ up to polynomial factors in $x$.
\begin{proof}
    Write $\be_1, \dots, \be_d \in \RR^d$ for the standard basis of $\RR^d$, and let $i_1, \dots, i_n \sim \Unif([d])$.
    Letting $\bm z \colonequals \sum_{a = 1}^n \be_{i_a}$, we have that the law of $\bm z$ is $\Mult(n, d)$.
    This realizes the multinomial distribution as a sum of i.i.d.\ random vectors, to which we will apply our vector Bernstein inequality.

    In particular, writing $\bv_a \colonequals \be_{i_a} - \frac{1}{d}\one_d = \be_{i_a} - \EE[\be_{i_a}]$, we have that $\frac{d}{n}\|\sum_{a = 1}^n \bv_a\|^2$ has the law $\chi^2_{\Pear}(n, d)$.
    We also have $\EE \bv_a = \bm 0$, $\Cov(\bv_a) = \frac{1}{d} \bm I_d - \frac{1}{d^2}\one_d\one_d^{\top}$ whereby $\|\Cov(\bv_a)\| = \frac{1}{d}$, and $\|\bv_a\|^2 = 1 - \frac{1}{d} = \frac{d - 1}{d}$ almost surely.

    Plugging this information into Lemma~\ref{lem:vec-bern}, we find that, for any $\epsilon \in (0, 1)$,
    \begin{align*}
      \Px_{X \sim \chi^2_{\Pear}(n, d)}[\sqrt{X} \geq t]
      &= \PP\left[\left\|\sum_{a = 1}^n \bv_a\right\| \geq \sqrt{\frac{n}{d}} t \right] \\
      &\leq \left(1 + \frac{2}{\epsilon}\right)^d \exp\left(-\frac{\frac{n}{d}t^2}{2\frac{1}{(1 - \epsilon)^2}\frac{n}{d} + \frac{2}{3}\frac{1}{1 - \epsilon}\sqrt{\frac{n}{d}}\sqrt{\frac{d - 1}{d}}t}\right),
    \end{align*}
    and rearranging gives the result.
\end{proof}

\begin{corollary}[Moments of $\chi^2_{\Pear}$]
    For any $n, d \geq 1$ integers, $r \geq 1$, and $\delta, \epsilon \in (0, 1)$,
    \[ \Ex_{X \sim \chi^2_{\Pear}(n, d)}[X^r] \leq 2r\left(1 + \frac{2}{\epsilon}\right)^d\left[ \left(\frac{1 + \sqrt{\delta d}}{(1 - \epsilon)^2}\right)^r2^r \Gamma(r) + \left(\frac{\frac{4}{\delta} + 4d}{(1 - \epsilon)^2}\right)^r \frac{\Gamma(2r)}{n^r}\right]. \]
\end{corollary}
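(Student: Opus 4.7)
The plan is to reduce the moment bound to an integration of the tail bound from Lemma~\ref{lem:pear-right-tail}, splitting the integration range at a point that depends on the free parameter $\delta$, and treating the subgaussian and subexponential regimes of the tail separately.

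First, I would use the standard identity $\EE[X^r] = r\int_0^\infty t^{r-1}\,\PP[X \geq t]\,dt$ and substitute the bound from Lemma~\ref{lem:pear-right-tail}, writing the tail as $K\exp(-at/(1+b\sqrt{t}))$ with $a = \tfrac{1}{2}(1-\epsilon)^2$, $b = \tfrac{1-\epsilon}{3}\sqrt{(d-1)/n}$, and $K = (1+2/\epsilon)^d$. The key observation is that the denominator $1 + b\sqrt{t}$ crosses over from being dominated by $1$ (subgaussian behavior) to being dominated by $b\sqrt{t}$ (subexponential behavior in $\sqrt{t}$), and the natural threshold in these problems is $t_0$ chosen so that $b\sqrt{t_0} = \sqrt{\delta d}$, i.e.\ $t_0 = 9\delta d n / ((1-\epsilon)^2(d-1))$.

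On $[0, t_0]$, I would use the monotone bound $1 + b\sqrt{t} \leq 1 + \sqrt{\delta d}$ to get $\PP[X\geq t] \leq K\exp(-c_1 t)$ with $c_1 = (1-\epsilon)^2/(2(1+\sqrt{\delta d}))$, and then extend the integral to all of $[0,\infty)$, yielding $rK\Gamma(r)/c_1^r = rK \cdot 2^r\Gamma(r)\bigl((1+\sqrt{\delta d})/(1-\epsilon)^2\bigr)^r$. On $[t_0, \infty)$, I would instead use $1 + b\sqrt{t} \leq (1+1/\sqrt{\delta d})\,b\sqrt{t}$ to get $\PP[X\geq t] \leq K\exp(-c_2\sqrt{t})$ with $c_2 = a/(b(1+1/\sqrt{\delta d}))$, and again extend to $[0,\infty)$. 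The change of variables $u = c_2\sqrt{t}$ converts this to $2rK\Gamma(2r)/c_2^{2r}$. Computing $a^2/b^2 = 9n(1-\epsilon)^2/(4(d-1))$ gives $1/c_2^{2r} = \bigl(4(d-1)(1+1/\sqrt{\delta d})^2/(9n(1-\epsilon)^2)\bigr)^r$, which can be bounded using $(1+1/\sqrt{\delta d})^2 \leq 2(1 + 1/(\delta d))$ to arrive at $\bigl((4/\delta + 4d)/(1-\epsilon)^2\bigr)^r / n^r$ (with substantial slack, which we discard).

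Adding the two contributions and factoring out the common $2rK$ (using $r \leq 2r$ to absorb the smaller coefficient on the first piece) yields exactly the stated bound. The only real obstacle is bookkeeping: choosing the split so that the algebra produces the clean constants $1 + \sqrt{\delta d}$ and $4/\delta + 4d$ appearing in the statement, and keeping track of the elementary inequalities $(1+x)^2 \leq 2(1+x^2)$ and $d - 1 \leq d$ that are needed to convert the raw bound into this form. Everything else is a routine integration.
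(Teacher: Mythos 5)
Your proposal is correct and follows essentially the same strategy as the paper: integrate the tail bound of Lemma~\ref{lem:pear-right-tail}, split the integral at a $\delta$-dependent threshold separating the subgaussian and subexponential regimes, extend each piece to $[0,\infty)$, and recognize the two resulting integrals as $\Gamma(r)$ and $\Gamma(2r)$ (the latter after a square-root substitution). The only cosmetic differences are that the paper first makes a change of variable to bring the exponent into the form $-s/(1+\sqrt{ds/n})$ and splits at $s = \delta n$, and slightly loosens the Bernstein constants earlier (replacing $\tfrac{1}{3}\sqrt{(d-1)/n}$ by $\tfrac{1}{\sqrt 2}\sqrt{d/n}$), whereas you keep the original constants and split directly in the $t$ variable at the point where $b\sqrt{t_0}=\sqrt{\delta d}$; your arithmetic reproduces the stated constants with room to spare.
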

\begin{proof}
    We start by integrating the tail bound:
    \begin{align*}
      \Ex_{X \sim \chi^2_{\Pear}(n, d)}[X^r]
      &= \int_0^{\infty} \PP[X^r > t]\,dt \\
      &= \int_0^{\infty} \PP[\sqrt{X} > t^{\frac{1}{2r}}]\,dt \\
      &\leq \left(1 + \frac{2}{\epsilon}\right)^d \int_0^{\infty} \exp\left(-\frac{\frac{1}{2}(1 - \epsilon)^2t^{\frac{1}{r}}}{1 + \frac{1 - \epsilon}{\sqrt{2}}\sqrt{\frac{d}{n}}t^{\frac{1}{2r}}}\right)\,dt \\
      &= \left(1 + \frac{2}{\epsilon}\right)^d \cdot r\left(\frac{2}{(1 - \epsilon)^2}\right)^r \int_0^{\infty} \exp\left(-\frac{s}{1 + \sqrt{\frac{d}{n}s}}\right)s^{r - 1}\,ds.
    \end{align*}
    For the remaining integral, let us fix $\delta > 0$ as in the statement, and break the integral up into two parts at $\delta n$:
    \begin{align*}
      &\int_0^{\infty} \exp\left(-\frac{s}{1 + \sqrt{\frac{d}{n}s}}\right)s^{r - 1}\,ds \\
      &= \int_0^{\delta n} \exp\left(-\frac{s}{1 + \sqrt{\frac{d}{n}s}}\right)s^{r - 1}\,ds + \int_{\delta n}^{\infty} \exp\left(-\frac{s}{1 + \sqrt{\frac{d}{n}s}}\right)s^{r - 1}\,ds \\
      &\leq \int_0^{\infty} \exp\left(-\frac{s}{1 + \sqrt{\delta d}}\right)s^{r - 1}\,ds + \int_{0}^{\infty} \exp\left(-\frac{\sqrt{s}}{\sqrt{\frac{1}{\delta n}} + \sqrt{\frac{d}{n}}}\right)s^{r - 1}\,ds
        \intertext{and both remaining integrals may be viewed as evaluations of the $\Gamma$ function, the second one after a further substitution $s = r^2$, giving}
      &= (1 + \sqrt{\delta d})^r\,\Gamma(r) + 2\left(\frac{1}{\sqrt{\delta}} + \sqrt{d}\right)^{2r} \frac{\Gamma(2r)}{n^r}.
    \end{align*}
    The result as stated is obtained by a few elementary bounds to simplify this expression.
\end{proof}

The formula above is complicated but the point is simple: when $\delta, \epsilon$ are both small and $r = o(n)$, then the prefactors and the second term are negligible, and the expression becomes $O_d(r2^r\Gamma(r))$, which is same scale as the $r$th moment of $\chi^2(d - 1)$.
We express this in the following corollary before giving the proof.
\begin{corollary}[Simplified moment bound]
    \label{cor:simple-moment}
    For all $\epsilon > 0$, there exist constants $C, \gamma > 0$ such that, for all $n \geq 1$ and $1 \leq r \leq \gamma n$, we have
    \[ \Ex_{X \sim \chi^2_{\Pear}(n, d)}[X^r] \leq r^{3/2} C^d \left(\frac{(2 + \epsilon)r}{e}\right)^r. \]
\end{corollary}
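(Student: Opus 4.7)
My plan is to derive Corollary~\ref{cor:simple-moment} from the preceding moment bound by choosing its free parameters $\epsilon$ and $\delta$ carefully, applying Stirling's approximation to simplify the gamma factors, and showing that for $r \le \gamma n$ with $\gamma$ small enough the second summand there is absorbed into the first.

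First I would take $\epsilon' = \epsilon/20$ in place of the previous corollary's $\epsilon$, and $\delta = \epsilon^2/(25 d)$ depending on $d$, making $\sqrt{\delta d} = \epsilon/5$. A short computation then verifies $2(1+\sqrt{\delta d})/(1-\epsilon')^2 \le 2+\epsilon$. Using Stirling in the form $\Gamma(r) \le c_0 \sqrt{r}(r/e)^r$, the first summand of the previous corollary becomes at most $c_1 r^{3/2}(1+40/\epsilon)^d\,((2+\epsilon)r/e)^r$, which fits into the target once $C = C(\epsilon)$ is taken large enough to absorb both the factor $1+40/\epsilon$ and the constant $c_1$.

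For the second summand, with the same $\delta$ we have $4/\delta + 4d = O_\epsilon(d)$, and Stirling via $\Gamma(2r) \le c_0 \sqrt{r}(2r/e)^{2r}$ produces a bound of the form $c_1' r^{3/2}(1+40/\epsilon)^d\,(K d r^2/(\epsilon^2 n))^r$ for an explicit constant $K$. Using $r \le \gamma n$ to replace $r^2/n$ by $\gamma r$ and comparing to the target, the excess factor is $(K'\gamma d)^r$ for some $K' = K'(\epsilon)$; when $d \le 1/(K'\gamma)$ this is at most $1$, immediately giving the target bound.

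The main obstacle arises in the complementary regime $d > 1/(K'\gamma)$: the excess factor then exceeds $1$ and must be offset against the $C^d$ prefactor of the target. Because $r$ can be as large as $\gamma n$ and is otherwise unrelated to $d$, a uniform absorption $(K'\gamma d)^r \le C^d$ is not immediate; in particular, for fixed $d$ and $n \to \infty$ with $r = \gamma n$, the naive bound is too loose. I expect the resolution to require either a refined choice of $\delta$ adapted also to $r$ so that the two summands of the previous corollary balance when $r$ and $d$ are of comparable size, or a direct application of the tail bound (Lemma~\ref{lem:pear-right-tail}) with a different split of the integration variable that sharpens the second summand in this regime. Assuming this refinement can be carried out, combining the two estimates yields the claimed bound with $C$ and $\gamma$ depending only on $\epsilon$.
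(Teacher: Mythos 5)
Your proof up to the point where you flag the obstacle is essentially the same chain of reasoning as the paper's: apply the preceding moment bound, choose $\delta \sim 1/d$ so that $\sqrt{\delta d}$ is small, invoke Stirling, and then compare the two resulting summands on the range $r \le \gamma n$. And you are right that this leaves a genuine issue: after the choice $\delta \asymp 1/d$ the second summand of the preceding corollary carries a factor of $d^r$ (coming from $4/\delta + 4d \asymp d$), so the ratio of the second summand to the target behaves like $(K' \gamma d)^r$ with $K'$ depending only on $\epsilon$. When $d$ is large relative to $1/\gamma$ this ratio grows without bound as $r$ increases toward $\gamma n$, and it cannot be absorbed into a fixed $C^d$. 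This same issue is present, implicitly, in the paper's own proof: the displayed intermediate bound writes the second summand as $(Cr^2/n)^r$ with a ``constant'' $C$, but tracing the computation shows that this $C$ inherits a factor proportional to $d$, so the paper's constraint $\gamma \le 2/(Ce)$ is really a constraint of the form $\gamma \lesssim_\epsilon 1/d$.

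Where you overcomplicate matters is in the proposed resolution. You search for a refinement (a $\delta$ adapted to $r$, or a different split of the tail integral) that would make $C$ and $\gamma$ depend only on $\epsilon$. That is likely achievable but is more work than needed: the corollary is used only with $d = k^2$ for fixed $k$, and all downstream statements (Theorems~\ref{thm:lcdf-p3}, \ref{thm:lcdf-p2}, and their applications) already absorb constants depending on $k$. So the simple and intended fix is to read the corollary as allowing $\gamma$ (and the absolute constant inside $(Cr^2/n)^r$) to depend on $d$ as well as $\epsilon$: take $\gamma \asymp_\epsilon 1/d$. With $r \le \gamma n$, the ratio $(K'\gamma d)^r$ is then bounded by $1$ uniformly in $r$, the second summand is dominated by the first, and the conclusion follows exactly as you and the paper argue. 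In other words, the obstacle in your last paragraph is real, but its resolution is a small adjustment of the quantifiers rather than a new estimate; you did correctly locate a spot where the paper's statement is phrased more uniformly than its proof delivers.
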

\begin{proof}
    Choosing $\delta, \epsilon$ in the previous Corollary appropriately and using Stirling's approximation $\Gamma(r) \lesssim \sqrt{r} (r / e)^r$, we find that there exists $C$ such that, for all $n \geq 1$ an integer and $r \geq 1$ real, we have
    \[ \Ex_{X \sim \chi^2_{\Pear}(n, d)}[X^r] \leq r^{3/2}C^d\left[ \left(\frac{(2 + \epsilon)r}{e}\right) + \left(\frac{Cr^2}{n}\right)^r \right]. \]
    But now, choosing $\gamma \leq \frac{2}{Ce}$, if $r \leq \gamma n$, then $r / n \leq \gamma$, so the second term above is smaller than the first, and the result follows (taking $C$ slightly larger to absorb a factor of 2).
\end{proof}

\subsection{Tools for Overlap Form of Low Degree Advantages}

Next, we provide some general tools for working with expressions that often appear in the style of analysis of LCDF and LDP algorithms that seeks to reduce the task to questions about a single ``overlap'' random variable involving the similarity of two draws of a hidden signal.
This theme has been explored in previous work including \cite{BKW-2019-ConstrainedPCA,KWB-2022-LowDegreeNotes, BBKMW-2020-SpectralPlantingColoring, Kunisky-2020-LowDegreeMorris, Kunisky-2024-LCDA1}, following similar observations for computations of $\chi^2$ divergences in \cite{MRZ-2015-LimitationsSpectral, PWBM-2016-PCASpikedMatrixSynchronization,BMVVX-2018-InfoTheoretic}.
We aim here to distill an elementary but important manipulation that often appears in these proofs, to let it be used more flexibly in our context and related ones in the future.

\begin{definition}[Truncated exponential]
    For $D \geq 0$ an integer and $t \in \RR$, we define
    \[ \exp^{\leq D}(t) \colonequals \sum_{d = 0}^D \frac{t^d}{d!}. \]
\end{definition}

\begin{proposition}[Basic properties]
    \label{prop:exp-trunc-bound}
    For any $t \in \RR$ and $D \geq 0$ an integer,
    \begin{align*}
      \exp^{\leq D}(t)
      &\leq \exp^{\leq D}(|t|) \\
      &\leq 2 \cdot \frac{(2D \vee |t|)^D}{D!}. \numberthis \label{eq:exp-trunc-bound}
    \end{align*}
    Further, $\exp^{\leq D}(t)$ is an increasing function on $t \in \RR_{\geq 0}$.
\end{proposition}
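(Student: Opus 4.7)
My plan is to address the three claims separately, each by an elementary manipulation of the partial-sum formula $\exp^{\leq D}(t) = \sum_{d=0}^{D} t^d/d!$. The first inequality $\exp^{\leq D}(t) \leq \exp^{\leq D}(|t|)$ is immediate by termwise comparison, since $t^d \leq |t|^d$ for every $d \geq 0$. The monotonicity assertion on $\RR_{\geq 0}$ follows by termwise differentiation: $\frac{d}{dt}\exp^{\leq D}(t) = \exp^{\leq D-1}(t)$, which is a sum of non-negative quantities whenever $t \geq 0$.

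The substantive content is the quantitative bound $\exp^{\leq D}(|t|) \leq 2(2D \vee |t|)^D/D!$. My plan is to split into two cases depending on whether $|t| > 2D$ or $|t| \leq 2D$, and in each case to compare each term of the sum to the top term $d = D$ via the consecutive-term ratios. In the case $|t| > 2D$, letting $a_d \colonequals |t|^d/d!$, the key estimate is
\[
\frac{a_{D-k}}{a_D} = \frac{D(D-1)\cdots(D-k+1)}{|t|^k} \leq \left(\frac{D}{|t|}\right)^k < 2^{-k},
\]
so $\sum_{d=0}^D a_d \leq 2 a_D = 2|t|^D/D!$, which equals $2(2D \vee |t|)^D/D!$. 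In the case $|t| \leq 2D$, I would first dominate $a_d \leq b_d \colonequals (2D)^d/d!$ termwise, then run the identical geometric-tail argument on the $b_d$: the ratio $b_{D-k}/b_D = D(D-1)\cdots(D-k+1)/(2D)^k$ is again bounded by $2^{-k}$, giving $\sum_d a_d \leq \sum_d b_d \leq 2 b_D = 2(2D)^D/D!$, as required.

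There is no real obstacle here; the whole argument is elementary. The only point worth a moment's thought is that the constant $2$ appearing in the threshold $2D \vee |t|$ is precisely what is needed to make the consecutive-term ratios near the top of the sum be bounded by $1/2$, which is in turn what produces the leading factor $2$ in front of the final bound after summing the geometric tail. The case split is essentially forced by this choice of threshold.
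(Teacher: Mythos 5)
Your argument is correct. The paper declares the first inequality and the monotonicity ``immediate'' and, for the main bound $\exp^{\leq D}(|t|) \leq 2(2D \vee |t|)^D/D!$, defers to Corollary~5.2.3 of an external reference rather than proving it inline, so there is no in-paper proof to compare against. Your self-contained argument---compare each term $a_d = |t|^d/d!$ to the top term $a_D$, observe that the consecutive-term ratios near $d = D$ are at most $1/2$ (directly when $|t| > 2D$, or after first dominating $a_d$ by $b_d = (2D)^d/d!$ when $|t| \leq 2D$), and sum the resulting geometric tail---is the natural elementary route and fills that gap cleanly. The case split on whether $|t|$ exceeds $2D$ mirrors the two branches of the maximum, and the factor $2$ in the threshold $2D \vee |t|$ is exactly what drives the term ratios to or below $1/2$, producing the leading constant $2$ after summing; your remark that this choice of threshold essentially forces the case split is on point. (One trivial edge case: at $D = 0$, $t = 0$ the right-hand side involves $0^0$, so the standard convention $0^0 = 1$ is implicitly in play; under it everything checks out.)
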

\noindent
All of these observations but the bound \eqref{eq:exp-trunc-bound} are immediate, and that bound is proved in Corollary~5.2.3 of \cite{Kunisky-2021-SpectralBarriersCertification}.

The techniques of the works cited above lead to expressions of the form $\EE \exp^{\leq D(n)}(R_n)$ for a sequence of scalar random variables $R_n \geq 0$ and a sequence of growing degrees $D(n)$.
Indeed, often $R_n$ even converge in distribution to a limiting random variable, but the challenge is to understand the ``race'' between this convergence and the convergence of the function $\exp^{\leq D(n)}(t)$ to $\exp(t)$.
The following gives a general treatment of precisely this situation, with no reference to low degree analysis.
We will use the following non-asymptotic form of Stirling's approximation.

\begin{proposition}
    \label{prop:factorial}
    For any $d \geq 1$, $d! \geq (d / e)^d$.
\end{proposition}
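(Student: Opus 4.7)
The plan is to use the Taylor series of the exponential function, which gives an immediate one-line argument. Specifically, for any $d \geq 1$, the series
\[ e^d = \sum_{k=0}^{\infty} \frac{d^k}{k!} \]
has all nonnegative terms, so in particular it is bounded below by its single $k=d$ term, giving $e^d \geq d^d / d!$. Rearranging yields $d! \geq d^d / e^d = (d/e)^d$, which is the desired inequality.

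As a backup, one could also proceed by induction on $d$. The base case $d=1$ is $1 \geq 1/e$. For the inductive step, assuming $d! \geq (d/e)^d$, one computes
\[ (d+1)! = (d+1)\cdot d! \geq (d+1)\left(\frac{d}{e}\right)^d, \]
and the inequality $(d+1)(d/e)^d \geq ((d+1)/e)^{d+1}$ reduces, after canceling, to $e \geq (1 + 1/d)^d$, which is the classical monotone approximation to $e$ from below. Either route suffices; I would go with the Taylor series argument for brevity, since it is essentially immediate and avoids appealing to the separate fact that $(1+1/d)^d$ is increasing to $e$.

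There is no substantive obstacle here; the only thing to watch is that the statement allows $d = 1$, which both arguments handle cleanly (the Taylor series has $d^d/d! = 1$ at $d=1$, and $e \geq 1$).
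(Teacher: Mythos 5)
Your Taylor-series argument is correct and is the standard one-line proof of this bound: since $e^d = \sum_{k \geq 0} d^k/k!$ with all nonnegative terms, picking out the $k = d$ term gives $e^d \geq d^d/d!$, hence $d! \geq (d/e)^d$. The paper states this proposition without proof, treating it as a well-known non-asymptotic form of Stirling's approximation, so there is no in-paper argument to compare against; your proof (and the inductive fallback, which is also valid) fills that in cleanly.
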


\begin{lemma}
    \label{lem:overlap}
    Let $R_n \geq 0$ be a sequence of real bounded random variables and $D(n) \in \NN$.
    Suppose there exists a sequence $A(n) \in \RR_{\geq 0}$ such that the following conditions hold:
    \begin{enumerate}
    \item For all sufficiently large $n$,
        \[ A(n) \geq D(n) \left(2 \vee \log\left(\frac{\|R_n\|_{\infty}}{D(n)}\right)\right). \]
        (Recall that $\|R_n\|_{\infty}$ is the smallest $C > 0$ such that $R_n \leq C$ almost surely.)
    \item For bounded $f: \RR_{\geq 0} \to \RR_{\geq 0}$ such that $\int_0^{\infty}f(t)\,dt < \infty$, for sufficiently large $n$, for all $t \in [0, A(n)]$,
        \[ \PP[R_n \geq t] \leq f(t) \exp(-t). \]
    \end{enumerate}
    Then, we have
    \[ \limsup_{n \to \infty} \EE \exp^{\leq D(n)}(R_n) < \infty. \]
\end{lemma}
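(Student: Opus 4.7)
\emph{Plan.} The approach is tail integration applied to the truncated exponential. Set $M_n \colonequals \|R_n\|_\infty$. Since $\exp^{\leq D(n)}$ is increasing on $\RR_{\geq 0}$ with derivative $\exp^{\leq D(n)-1}$, I would start from the tail formula
\[
\EE \exp^{\leq D(n)}(R_n) \;=\; 1 + \int_0^{M_n} \exp^{\leq D(n)-1}(t)\, \PP[R_n > t]\, dt,
\]
and split the integral at $t = A(n)$ into a lower piece $I_1$ over $[0, A(n)]$ and an upper piece $I_2$ over $(A(n), M_n]$ (which is empty in the trivial case $A(n) \geq M_n$).

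For $I_1$, I would combine the elementary pointwise bound $\exp^{\leq D(n)-1}(t) \leq \exp(t)$ with the hypothesized tail bound, collapsing the integrand to at most $f(t)$. The hypothesis $\int_0^\infty f < \infty$ then bounds $I_1$ by a finite constant independent of $n$.

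For $I_2$, I would first use monotonicity of the survival function, $\PP[R_n > t] \leq \PP[R_n > A(n)] \leq \|f\|_\infty \, e^{-A(n)}$ uniformly on $(A(n), M_n]$, and then integrate $\exp^{\leq D(n)-1}$ exactly to obtain $\exp^{\leq D(n)}(M_n) - \exp^{\leq D(n)}(A(n)) \leq \exp^{\leq D(n)}(M_n)$. Next I would apply Proposition~\ref{prop:exp-trunc-bound} to bound this by $2(2D(n) \vee M_n)^{D(n)}/D(n)!$, and Proposition~\ref{prop:factorial} to estimate the factorial via $D(n)! \geq (D(n)/e)^{D(n)}$. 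Altogether this yields an explicit bound on $I_2$ in terms of $\|f\|_\infty$, $A(n)$, $D(n)$, and $M_n$ only.

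The main obstacle will be verifying that the hypothesis $A(n) \geq D(n)\bigl(2 \vee \log(M_n/D(n))\bigr)$ is precisely what is needed to make the resulting bound on $I_2$ uniformly $O(1)$ in $n$. The intended case split is on whether $\log(M_n/D(n))$ exceeds $2$: in the regime $\log(M_n/D(n)) \geq 2$ (so $M_n$ is very large compared to $D(n)$), the logarithmic branch of the $\vee$ is active, and the dominant growth $(eM_n/D(n))^{D(n)}$ coming from the Proposition~\ref{prop:exp-trunc-bound}/Stirling estimate is absorbed by $e^{-A(n)}$; in the complementary regime $\log(M_n/D(n)) < 2$, the constant branch is active, and the crude bound $M_n \leq e^2 D(n)$ available there keeps $\exp^{\leq D(n)}(M_n)$ tractable relative to $e^{-A(n)}$. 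Combining the $O(1)$ bounds on $I_1$ and $I_2$ then yields the claimed $\limsup_n \EE \exp^{\leq D(n)}(R_n) < \infty$.
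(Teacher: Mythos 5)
Your plan is essentially the paper's proof recast in layer--cake form: the paper decomposes $\EE\exp^{\leq D(n)}(R_n)$ using the indicator $\One\{R_n \leq A(n)\}$, whereas you split the tail-integral representation at $t = A(n)$, but these are the same decomposition, and your bounds on $I_1$ and $I_2$ are precisely the paper's bounds on $E_{\tsmall}$ and $E_{\tlarge}$, down to the use of Propositions~\ref{prop:exp-trunc-bound} and~\ref{prop:factorial}. One caution when you actually execute the case split for $I_2$: the Stirling step correctly gives $\bigl(2e \vee eM_n/D(n)\bigr)^{D(n)}$, whose logarithm is $D(n) + D(n)\log\bigl(2 \vee M_n/D(n)\bigr)$, and the extra additive $D(n)$ is not literally absorbed by the hypothesis $A(n)\geq D(n)\bigl(2\vee\log(M_n/D(n))\bigr)$ when the logarithmic branch is active --- the paper's own write-up silently drops a factor of $e$ at exactly this point --- so both versions of the argument need the hypothesis to hold with a little slack, which is abundantly available in every application (e.g.\ in the proof of Theorem~\ref{thm:lcdf-p2} one has $A(n)=\Theta(n)$ while $D(n)\log(M_n/D(n))=O(n\log\log n/\log n)$).
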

\begin{proof}
    We decompose
    \[ \EE \exp^{\leq D(n)}(R_n) = \underbrace{\EE \One\{R_n > A(n)\} \exp^{\leq D(n)}(R_n)}_{\equalscolon E_{\tlarge}} + \underbrace{\EE \One\{R_n \leq A(n)\}\exp^{\leq D(n)}(R_n)}_{\equalscolon E_{\tsmall}}. \]

    For the ``large deviations'' term $E_{\tlarge}$, we use that $\exp^{\leq D(n)}(r)$ is monotone in $r$ and bound using our assumption on $\|R_n\|$ and Proposition~\ref{prop:exp-trunc-bound}
    \begin{align*}
      E_{\tlarge}
      &\leq \PP[R_n > A(n)] \exp^{\leq D(n)}(\|R_n\|_{\infty}) \\
      &\leq 2f(A(n))\exp(-A(n)) \frac{(2D(n) \vee \|R_n\|_{\infty})^{D(n)}}{D(n)!}
        \intertext{and using Proposition~\ref{prop:factorial} we have}
      &\leq 2f(A(n))\exp(-A(n)) \left(2e \vee \frac{\|R_n\|_{\infty}}{D(n)}\right)^{D(n)} \\
      &\leq 2f(A(n))\exp\left(-A(n) + D(n) \log\left(2e \vee \frac{\|R_n\|_{\infty}}{D(n)}\right)\right),
    \end{align*}
    and thus $E_{\tlarge}$ is bounded since by our assumptions $f$ is bounded and the expression in the exponential is negative.

    For the ``small deviations'' term $E_{\tsmall}$, we use that $\exp^{\leq D}(r) \leq \exp(r)$, so
    \begin{align*}
      E_{\tsmall}
      &\leq \EE \One\{R_n \leq A(n)\} \exp(R_n) \\
      &= \int_0^{\infty} \PP[\One\{R_n \leq A(n)\} \exp(R_n) \geq u]\, du \\
      &= \int_0^{\infty} \PP[\One\{R_n \leq A(n)\} \exp(R_n) \geq \exp(t)] \cdot \exp(t)\,dt \\
      &= \int_0^{A(n)} \PP[R_n \geq t] \cdot \exp(t)\,dt \\
      &\leq \int_0^{A(n)} f(t)\, dt \\
      &\leq \int_0^{\infty} f(t)\, dt,
    \end{align*}
    which is again a finite constant, so $E_{\tsmall}$ is bounded.
    Combining the two gives the result.
\end{proof}

The prototypical applications in, e.g., \cite{PWBM-2016-PCASpikedMatrixSynchronization, KWB-2022-LowDegreeNotes} to spiked matrix models, have used this as follows: for a constant $\lambda > 0$, $R_n$ is the law of $\frac{\lambda^2}{n} \langle \bx^{(1)}, \bx^{(2)} \rangle^2$ for, say, $\bx^{(i)} \sim \Unif(\{\pm 1\}^n)$ independently (or, more generally, vectors with i.i.d.\ bounded entries having mean zero and unit variance) and $0 < \lambda < 1$.
Then, $\|R_n\|_{\infty} = O(n)$ since $R_n \leq \frac{\lambda^2}{n} \|\bx^{(1)}\|^2 \|\bx^{(2)}\|^2$, and the conditions of the Lemma hold with $f(t) = C\exp(-\delta t)$ for large $C > 0$ and small $\delta > 0$ (depending on $\lambda$, which is where it is important that $\lambda < 1$) and $A(n) = \epsilon n$ for small $\epsilon > 0$.
The tail bound on $R_n$ in Condition 2 of the Lemma is what those works call a ``local Chernoff bound'' for the inner product $\langle\bx^{(1)}, \bx^{(2)} \rangle$.
This result then gives low-degree lower bounds for, say, any $D(n) = o(n / \log n)$.

We are just observing here that all of these details of their analysis are incidental, and the above is the distilled analytic content of the argument.
This is a simple observation, but, given the general bounds developed recently in \cite{Kunisky-2024-LCDA1} leading to more unusual forms of the random variables $R_n$, it seems that this will be a useful tool, and it will be helpful in our setting already.
For instance, we will encounter a situation where $R_n$ decomposes as a sum of (dependent) random variables, but we may establish the conditions of Lemma~\ref{lem:overlap} simply by a carefully tuned union bound.

\section{Main Lower Bounds}

\subsection{Coordinate Advantage Bound}

We will use the following object to show that strong separation is impossible, following a style of argument that has appeared recently in \cite{COGHWZ-2022-PhaseTransitionsGroupTesting,BAHSWZ-2022-FranzParisiLowDegree} using such quantities to reason about separation criteria.
\begin{definition}[Coordinate advantage]
    \label{def:cadv}
    For $\PP, \QQ$ as in Definition~\ref{def:gsbm} and $D \geq 1$, we define
    \begin{equation}
    \CAdv_{\leq D}(\PP, \QQ) \colonequals \left\{\begin{array}{ll} \text{maximize} & \Ex_{\by \sim \PP} f(\by) \\ \text{subject to} & \Ex_{\by \sim \QQ} f(\by)^2 \leq 1, \\ & \cdeg(f) \leq D \end{array}\right\}.
\end{equation}
\end{definition}
\noindent
The following is then immediate from the definition of strong separation.
\begin{proposition}
    \label{prop:cadv-sep}
    For a sequence of pairs of probability measures $\QQ_n$, $\PP_n$ and $D(n) \in \NN$, if $\CAdv_{\leq D(n)}(\PP_n, \QQ_n) = O(1)$ as $n \to \infty$, then there is no sequence of functions of coordinate degree at most $D(n)$ that strongly separate $\QQ_n$ from $\PP_n$.
\end{proposition}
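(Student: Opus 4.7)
The plan is to prove the contrapositive: supposing a sequence $f_n$ with $\cdeg(f_n) \leq D(n)$ achieves strong separation of $\QQ_n$ from $\PP_n$, I will exhibit a sequence of functions feasible in Definition~\ref{def:cadv} whose objective value grows without bound, contradicting $\CAdv_{\leq D(n)}(\PP_n,\QQ_n) = O(1)$. The idea is simply to center and rescale $f_n$ so that it becomes feasible for the program defining the coordinate advantage.

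The first step will be to set $g_n \colonequals f_n - \Ex_{\QQ_n} f_n$. Since constants lie in $V_{\leq 0} \subseteq V_{\leq D(n)}$, subtracting the constant $\Ex_{\QQ_n} f_n$ does not increase coordinate degree, so $\cdeg(g_n) \leq D(n)$; moreover $\Ex_{\QQ_n} g_n = 0$ and $\Ex_{\QQ_n} g_n^2 = \Varx_{\QQ_n} f_n$.

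In the generic case where $\Varx_{\QQ_n} f_n > 0$ along the sequence, I will define $h_n \colonequals g_n / \sqrt{\Varx_{\QQ_n} f_n}$. This $h_n$ is feasible in Definition~\ref{def:cadv}: its coordinate degree is at most $D(n)$ and $\Ex_{\QQ_n} h_n^2 = 1$. Its objective value is
\[ \Ex_{\PP_n} h_n = \frac{\Ex_{\PP_n} f_n - \Ex_{\QQ_n} f_n}{\sqrt{\Varx_{\QQ_n} f_n}}, \]
and strong separation forces the numerator to dominate any constant multiple of $\sqrt{\Varx_{\QQ_n} f_n}$. Hence $\CAdv_{\leq D(n)}(\PP_n,\QQ_n) \geq \Ex_{\PP_n} h_n = \omega(1)$, contradicting the hypothesis.

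The degenerate case $\Varx_{\QQ_n} f_n = 0$ only requires a brief remark: then $g_n = 0$ holds $\QQ_n$-almost surely, so $t g_n$ remains feasible for every $t > 0$, while $\Ex_{\PP_n}(t g_n) = t(\Ex_{\PP_n} f_n - \Ex_{\QQ_n} f_n)$; strong separation forces this difference to be nonzero, so letting $t \to \infty$ makes the coordinate advantage infinite. I do not anticipate any genuine obstacle; the content of the proposition is essentially that, after normalization in $L^2(\QQ_n)$, any low-coordinate-degree strong separator becomes a witness to unbounded coordinate advantage, and the only detail that must be explicitly checked is that centering by a constant preserves coordinate degree, which is immediate from the definition of $V_{\leq D}$.
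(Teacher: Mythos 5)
Your proof is correct, and it is exactly the standard argument that the paper has in mind when it states that the proposition is ``immediate from the definition of strong separation'' and omits a proof. Centering by $\Ex_{\QQ_n} f_n$ (which preserves coordinate degree since constants lie in $V_{\leq 0}$), rescaling by $\sqrt{\Varx_{\QQ_n} f_n}$ to meet the $L^2(\QQ_n)$-norm constraint, and then reading off the objective value as the signal-to-noise ratio appearing in the definition of strong separation is the intended route; your handling of the degenerate case $\Varx_{\QQ_n} f_n = 0$ is also correct, since for large $n$ strong separation forces $\Ex_{\PP_n} f_n - \Ex_{\QQ_n} f_n > 0$ while $t g_n$ remains feasible for all $t$.
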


We follow the approach of \cite{Kunisky-2024-LCDA1} in deriving tractable bounds on the coordinate advantage for GSBMs.
Consider first a single GSBM rather than a sequence, with parameters $p, k, n$ as in Definition~\ref{def:gsbm} and with a resulting pair of probability measures $\QQ$ and $\PP$.
We begin by deriving the following remarkably simple bound on the coordinate advantage involving the characterstic tensor and a multinomial random vector.
\begin{lemma}
    \label{lem:cadv}
    In the above setting, let $\bT$ be the characteristic tensor of the GSBM.
    Then,
    \[ \CAdv_{\leq D}(\QQ, \PP)^2 \leq \Ex_{\bm z \sim \Mult(n, k^2)} \exp^{\leq D}\left(\langle \bT, \bm z^{\otimes p} \rangle\right). \]
\end{lemma}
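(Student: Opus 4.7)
The plan is to express $\CAdv_{\leq D}(\QQ, \PP)^2$ as the squared $L^2(\QQ)$-norm of the projection of the likelihood ratio $L \colonequals d\PP/d\QQ$ onto $V_{\leq D}$, compute that norm via an Efron--Stein-style expansion, and then reduce to a degree-by-degree inequality driven by the symmetry of the characteristic tensor $\bT$.

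First, by standard duality, $\CAdv_{\leq D}(\QQ, \PP)^2 = \|P_{\leq D} L\|_{L^2(\QQ)}^2$, where $P_{\leq D}$ denotes orthogonal projection onto $V_{\leq D}$. Writing $\phi_{\ba} \colonequals d\mu_{\ba}/d\mu_{\avg}$ and $\psi_{\ba} \colonequals \phi_{\ba} - 1$ (mean-zero under $\mu_{\avg}$), the likelihood ratio expands as
\[ L(\bY) = \Ex_{\bx \sim \Unif([k]^n)} \prod_{S \in \binom{[n]}{p}} (1 + \psi_{\bx|_S}(Y_S)) = \Ex_{\bx} \sum_{\sS \subseteq \binom{[n]}{p}} \prod_{S \in \sS} \psi_{\bx|_S}(Y_S). \]
Because $\QQ$ is a product measure and each $\psi_{\ba}$ has zero mean, the $\sS$-indexed term is the Efron--Stein component of $L^2(\QQ)$ on the coordinate set $\sS$, so $P_{\leq D}$ truncates the outer sum to $|\sS| \leq D$. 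Orthogonality of the components together with the identity $\Ex_{Y \sim \mu_{\avg}}[\psi_{\bx|_S}(Y)\psi_{\by|_S}(Y)] = p! \cdot T_{(x_{s_1}, y_{s_1}), \dots, (x_{s_p}, y_{s_p})} =: u_S(\bx, \by)$ then yield
\[ \|P_{\leq D} L\|_{L^2(\QQ)}^2 = \Ex_{\bx, \by \sim \Unif([k]^n)} \sum_{|\sS| \leq D} \prod_{S \in \sS} u_S(\bx, \by). \]

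The heart of the argument is to prove, for every $0 \leq d \leq D$, the degree-$d$ bound
\[ \Ex_{\bx, \by} \sum_{|\sS| = d} \prod_{S \in \sS} u_S \leq \frac{1}{d!} \Ex_{\bx, \by} \langle \bT, \bz^{\otimes p}\rangle^d, \]
where $\bz$ is identified with the $\Mult(n, k^2)$ count vector of the pairs $(x_i, y_i)$; summing over $d$ then yields the claimed $\exp^{\leq D}$ bound. To prove this, expand $\langle \bT, \bz^{\otimes p}\rangle = \sum_{\bs \in [n]^p} T_{(x_{\bs}, y_{\bs})}$, use $p! \cdot T_{(a_1, b_1), \dots} = \Ex_Y \psi_{(a_1, \dots)}(Y)\psi_{(b_1, \dots)}(Y)$, and exploit independence of $\bx$ and $\by$ to rewrite
\[ \Ex_{\bx, \by}\langle \bT, \bz^{\otimes p}\rangle^d = \frac{1}{(p!)^d} \sum_{\bs^{(1)}, \dots, \bs^{(d)} \in [n]^p} \Ex_{Y_1, \dots, Y_d \sim \mu_{\avg}} \Bigl( \Ex_{\bx} \prod_{j=1}^d \psi_{x_{\bs^{(j)}}}(Y_j) \Bigr)^{2}, \]
manifestly a sum of nonnegative squares. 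The key observation is that, by symmetry of $\bT$ (a consequence of weak symmetry of the GSBM), $\Ex_Y \psi_{x_{\bs}}(Y)\psi_{y_{\bs}}(Y) = u_S$ for \emph{every} ordering $\bs$ of $S$, not only the sorted one. Restricting the outer sum to ``diagonal-free'' tuples (each $\bs^{(j)}$ has distinct entries, and the underlying sets $S_j$ are pairwise distinct) therefore contributes exactly $(p!)^d \cdot \Ex_{\bx, \by}\prod_j u_{S_j}$ per ordered distinct set-tuple $(S_1, \dots, S_d)$; dividing by $(p!)^d$ and summing gives precisely $d! \cdot \Ex\sum_{|\sS|=d}\prod_S u_S$, and the remaining tuples contribute additional nonnegative squares.

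The main subtlety---and the step without which the bookkeeping would not close---is this symmetry observation. Although $\psi_{\bx|_S}$ itself is not invariant under permutations of the ordering of $S$, the inner product $\Ex_Y \psi_{x_{\bs}}\psi_{y_{\bs}}$ is, thanks to weak symmetry. This is what converts the $(p!)^d$ overcounting between unordered subsets of $\binom{[n]}{p}$ (in the left-hand side) and unrestricted $d$-tuples of $[n]^p$ (in the right-hand side) into the combinatorial factor $1/d!$ needed to assemble $\exp^{\leq D}$. The remaining pieces---identifying $\bz$ with the pair counts of $(x_i, y_i)$ and summing the per-$d$ bounds---are routine.
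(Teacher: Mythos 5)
Your proof is correct and follows the same essential approach as the paper: expand the likelihood ratio into its Efron--Stein components, identify the squared-projection $\|P_{\leq D}L\|^2$ with the set-indexed overlap sum $\Ex_{\bx,\by}\sum_{|\sS|\leq D}\prod_S u_S$, compare this to the unrestricted tuple sum via a nonnegative sum-of-squares expansion (using weak symmetry to account for reorderings), and recognize $\langle \bT, \bz^{\otimes p}\rangle$ with $\bz \sim \Mult(n, k^2)$. The only difference is that the paper delegates the first and third steps to Theorem 3.5 and Lemma 4.5 of \cite{Kunisky-2024-LCDA1}, whereas you reprove them inline; the underlying mechanism is the same.
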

\noindent
Note that this Lemma achieves a dramatic dimensionality reduction, similar to the ``overlap formulas'' discussed earlier: while our original problem had dimension growing with $n$, $\bT$ is a tensor of size not depending on $n$, so we have isolated the role of $n$ to its participation in the multinomial distribution of the vector $\bm z$ (which, like $\bT$, has fixed dimension).

\begin{proof}
    Following \cite{Kunisky-2024-LCDA1}, we define the \emph{channel overlap}, for $\ba = (a_1, \dots, a_p), \bb = (b_1, \dots, b_p) \in [k]^p$, to be the quantity:
    \[ R(\ba, \bb) \colonequals \Ex_{y \sim \mu_{\avg}}\left[\left(\frac{d\mu_{\ba}}{d\mu_{\avg}}(y) - 1\right)\left(\frac{d\mu_{\bb}}{d\mu_{\avg}}(y) - 1\right)\right]. \]
    Recall that these are also the entries of $\bT$.
    By our assumption of weak symmetry, $R$ is unchanged by a simultaneous permutation of both of its inputs: for any $\sigma \in \Sym([p])$,
    \[ R((a_{\sigma(1)}, \dots, a_{\sigma(p)}), (b_{\sigma(1)}, \dots, b_{\sigma(p)})) = R((a_1, \dots, a_p), (b_1, \dots, b_p)). \]

    From this we build a ``total overlap,'' which is just the sum of the channel overlap over all observations.
    That is, for $\bx^{(1)}, \bx^{(2)} \in [k]^n$, we set
    \begin{align*}
      R(\bx^{(1)}, \bx^{(2)})
      &\colonequals \sum_{1 < i_1 < \cdots < i_p \leq n} R((x_{i_1}^{(1)}, \dots, x_{i_p}^{(1)}), (x_{i_1}^{(2)}, \dots, x_{i_p}^{(2)})) \\
      &= \sum_{1 < i_1 < \cdots < i_p \leq n} \Ex_{y \sim \mu_{\avg}}\left[\left(\frac{d\mu_{(x_{i_1}^{(1)}, \dots, x_{i_p}^{(1)})}}{d\mu_{\avg}}(y) - 1\right)\left(\frac{d\mu_{(x_{i_1}^{(2)}, \dots, x_{i_p}^{(2)})}}{d\mu_{\avg}}(y) - 1\right)\right] \\
      &= \frac{1}{p!} \sum_{\substack{\bm i \in [n]^p \\ \text{all entries distinct}}} \Ex_{y \sim \mu_{\avg}}\left[\left(\frac{d\mu_{(x_{i_1}^{(1)}, \dots, x_{i_p}^{(1)})}}{d\mu_{\avg}}(y) - 1\right)\left(\frac{d\mu_{(x_{i_1}^{(2)}, \dots, x_{i_p}^{(2)})}}{d\mu_{\avg}}(y) - 1\right)\right],
    \end{align*}
    where the last step follows by our observation from weak symmetry above.

    By \cite[Theorem 3.5]{Kunisky-2024-LCDA1}, the low coordinate degree advantage is bounded in terms of this total overlap as:
    \[ \CAdv_{\leq D}(\QQ, \PP)^2 \leq \Ex_{\bx^{(1)}, \bx^{(2)} \sim \Unif([k]^p)} \exp^{\leq D}(R(\bx^{(1)}, \bx^{(2)})). \]
    Consider next the modified overlap allowing for all tuples of indices:
    \[ R^{\prime}(\bx^{(1)}, \bx^{(2)}) \colonequals \frac{1}{p!} \sum_{\bm i \in [n]^p} \Ex_{y \sim \mu_{\avg}}\left[\left(\frac{d\mu_{(x_{i_1}^{(1)}, \dots, x_{i_p}^{(1)})}}{d\mu_{\avg}}(y) - 1\right)\left(\frac{d\mu_{(x_{i_1}^{(2)}, \dots, x_{i_p}^{(2)})}}{d\mu_{\avg}}(y) - 1\right)\right]. \]
    We claim that our bound only increases upon replacing $R$ with $R^{\prime}$:
    \begin{equation}
        \Ex_{\bx^{(1)}, \bx^{(2)} \sim \Unif([k]^p)} \exp^{\leq D}(R(\bx^{(1)}, \bx^{(2)})) \leq \Ex_{\bx^{(1)}, \bx^{(2)} \sim \Unif([k]^p)} \exp^{\leq D}(R^{\prime}(\bx^{(1)}, \bx^{(2)})). \label{eq:overlap-comparison}
    \end{equation}
    Indeed, we may write the two overlaps as
    \begin{align*}
      R(\bx^{(1)}, \bx^{(2)}) &= \frac{1}{p!} \sum_{\substack{\bm i \in [n]^p \\ \text{all entries distinct}}} \left\langle \bar{L}_{(x_{i_1}^{(1)}, \dots, x_{i_p}^{(1)})}, \bar{L}_{(x_{i_1}^{(2)}, \dots, x_{i_p}^{(2)})} \right\rangle, \\
      R^{\prime}(\bx^{(1)}, \bx^{(2)}) &= \frac{1}{p!} \sum_{\bm i \in [n]^p} \left\langle \bar{L}_{(x_{i_1}^{(1)}, \dots, x_{i_p}^{(1)})}, \bar{L}_{(x_{i_1}^{(2)}, \dots, x_{i_p}^{(2)})} \right\rangle,
    \end{align*}
    where $\bar{L}_{\ba} = \frac{d\mu_{\ba}}{d\mu_{\avg}} - 1$ is the centered likelihood ratio and the inner products are in $L^2(\mu_{\avg})$.
    These likelihood ratios satisfy
    \[ \Ex_{\bx^{(j)} \sim \Unif([k]^n)} \bar{L}_{(x_{i_1}^{(j)}, \dots, x_{i_p}^{(j)})} = 0 \]
    for all $\bm i \in [k]^n$ and $j \in \{1, 2\}$.
    Therefore, upon fully expanding the powers of $R$ and $R^{\prime}$ appearing on either side of \eqref{eq:overlap-comparison} and taking the expectation over $\bx^{(1)}, \bx^{(2)}$ first, we will have that only the diagonal terms contribute to such an expansion, and each contribution is non-negative.
    There are more such terms in the expression with $R^{\prime}$, and so \eqref{eq:overlap-comparison} holds (see \cite[Lemma 4.5]{Kunisky-2024-LCDA1} for full details of this argument).
    Thus we also have
    \[ \CAdv_{\leq D}(\QQ, \PP)^2 \leq \Ex_{\bx^{(1)}, \bx^{(2)} \sim \Unif([k]^p)} \exp^{\leq D}(R^{\prime}(\bx^{(1)}, \bx^{(2)})). \]

    Now, define
    \[ z_{a, b} = z_{a, b}(\bx^{(1)}, \bx^{(2)}) \colonequals \#\{i \in [n]: x^{(1)}_i = a, x^{(2)}_i = b\}. \]
    First note that we may rewrite $R^{\prime}$ by grouping like terms as
    \begin{align*}
      R^{\prime}(\bx^{(1)}, \bx^{(2)})
      &= \sum_{\ba \in [k]^p} \sum_{\bm b \in [k]^p} \frac{1}{p!}\Ex_{y \sim \mu_{\avg}}\left[\left(\frac{d\mu_{\ba}}{d\mu_{\avg}}(y) - 1\right)\left(\frac{d\mu_{\bb}}{d\mu_{\avg}}(y) - 1\right)\right] z_{a_1, b_1} \cdots z_{a_p, b_p} \\
      &= \langle \bT, \bz^{\otimes p} \rangle,
    \end{align*}
    where $\bT$ is the characteristic tensor.
    Finally, the proof is complete after observing that, when $\bx^{(1)}, \bx^{(2)} \sim \Unif([k]^p)$ independently, then the law of $\bm z$ is precisely $\Mult(n, k^2)$, but where in the expression above we rearrange its entries into a $k \times k$ matrix.
\end{proof}

Now, note that $\EE \bz = \frac{n}{k^2} \one$, for $\one$ the all-ones vector (of dimension $k^2$, but implicitly indexed by $[k] \times [k]$).
We define for $\bm z$ as in the Lemma its centered version,
\[ \bbz \colonequals \bz - \frac{n}{k^2} \one, \]
whereby we have
\[ \bz = (\bz - \EE \bz) + \EE \bz = \bbz + \frac{n}{k^2} \one. \]
The following straightforward calculation describes how the marginal characteristic tensors arise from plugging the above observation into the result of the Lemma.
\begin{proposition}
    \label{prop:marginal-expansion}
    In the above setting, if the marginal order of the GSBM is at least $p_*$, then
    \[ \langle \bT, \bz^{\otimes p} \rangle = \sum_{j = p_*}^{p} \binom{p}{j} n^{p - j} \langle \bT^{(j)}, \bbz^{\otimes (j)}\rangle. \]
\end{proposition}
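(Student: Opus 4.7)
The plan is a routine binomial expansion in the tensor power together with the definition of marginal characteristic tensors. I would proceed as follows.

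First, substitute $\bz = \bbz + \tfrac{n}{k^2}\one$ into $\bz^{\otimes p}$ and multilinearly expand to obtain a sum over the $2^p$ ways of choosing $\bbz$ or $\tfrac{n}{k^2}\one$ in each of the $p$ tensor factors. Contracting against $\bT$, each such term is of the form $(n/k^2)^{p-j} \langle \bT, \bw_1 \otimes \cdots \otimes \bw_p \rangle$ where exactly $j$ of the $\bw_i$ equal $\bbz$ and the rest equal $\one$, for some $j \in \{0,\dots,p\}$. Since $\bT$ is symmetric under simultaneous permutations of its $p$ flattened $(a,b)$-axes — this is precisely the content of the weak symmetry assumption, as recorded at the start of the proof of Lemma~\ref{lem:cadv} — all $\binom{p}{j}$ such contractions with a given $j$ coincide. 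Grouping them yields
\[ \langle \bT, \bz^{\otimes p} \rangle = \sum_{j=0}^{p} \binom{p}{j} \Big(\frac{n}{k^2}\Big)^{p-j} \bigl\langle \bT[\,\underbrace{\one,\dots,\one}_{p-j},\,\cdot,\dots,\cdot\,],\, \bbz^{\otimes j}\bigr\rangle. \]

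Second, recognize the partial contraction inside the inner product as (a scalar multiple of) a marginal characteristic tensor. By the definition of $\bT^{(j)}$, we have $\bT[\one,\dots,\one,\cdot,\dots,\cdot\,] = k^{2(p-j)}\,\bT^{(j)}$ with $p-j$ copies of $\one$, so the factor of $(n/k^2)^{p-j}$ combines with $k^{2(p-j)}$ to give exactly $n^{p-j}$. This yields the identity
\[ \langle \bT, \bz^{\otimes p} \rangle = \sum_{j=0}^{p} \binom{p}{j}\, n^{p-j}\, \langle \bT^{(j)}, \bbz^{\otimes j}\rangle. \]

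Finally, invoke the marginal order hypothesis: by definition of $p_*$, the tensors $\bT^{(j)}$ vanish for all $j < p_*$, so the terms with $j < p_*$ drop out of the sum and we recover the claimed identity. The only subtlety worth double-checking is the accounting in the symmetrization step — one must be careful that $\bT$ is symmetric as a tensor in $(\RR^{k \times k})^{\otimes p}$ (with the flattened index $(a_i,b_i)$), which is exactly what weak symmetry guarantees and which has already been used in Lemma~\ref{lem:cadv}. There is no genuine obstacle; the computation is essentially a tensor-valued binomial theorem combined with unwinding the definitions.
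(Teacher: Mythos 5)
Your proof is correct and follows essentially the same route as the paper: substitute $\bz = \bbz + \tfrac{n}{k^2}\one$, expand multilinearly, use the symmetry of $\bT$ as a tensor over the flattened index set $[k]\times[k]$ (which is exactly what weak symmetry buys, as the paper notes) to collapse the $2^p$ terms into a binomial sum, identify the partial contractions with the marginal characteristic tensors via their definition, and drop the terms with $j < p_*$. The bookkeeping of powers of $k^2$ and $n$ is right, and the proof is complete.
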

\begin{proof}
    Observe that $\bT$ is a symmetric tensor.
    We then expand directly, grouping like terms:
    \begin{align*}
      \langle \bT, \bz^{\otimes p} \rangle
      &= \left\langle \bT, \left(\bbz + \frac{n}{k^2} \one\right)^{\otimes p} \right\rangle
        \intertext{where since $\bT$ is symmetric, in each of the $2^p$ terms arising we may group together all of the occurrences of $\one$ and all of the occurrences of $\bbz$, giving an expansion as in the binomial theorem,}
      &= \sum_{j = 0}^p \binom{p}{j} n^{p - j} \frac{1}{k^{2(p - j)}} \langle \bT, \underbrace{\one \otimes \cdots \otimes \one}_{p - j \text{ times}} \otimes \underbrace{\bbz \otimes \cdots \otimes \bbz}_{j \text{ times}} \rangle
        \intertext{but now by definition of the marginalized tensors,}
      &= \sum_{j = 0}^p \binom{p}{j} n^{p - j} \langle \bT^{(j)}, \bbz^{\otimes (p - j)}\rangle,
    \end{align*}
    giving the result since all terms with $j < p^*$ vanish by the definition of marginal order.
\end{proof}

Finally, what we will actually use is the following bound on this overlap and therefore on the advantage.
For the sake of clarity, we give both the form involving the centered multinomial vector $\bar{\bz}$ following from the above, and the normalized version in terms of the Pearson $\chi^2$ statistic that will connect to the bounds we have developed in Section~\ref{sec:pearson}.
\begin{corollary}
    \label{cor:cadv-bound}
    In the above setting, if the marginal order of the GSBM is at least $p_*$, then
    \begin{align*}
      \CAdv_{\leq D}(\QQ, \PP)
      &\leq \Ex_{\bm z \sim \Mult(n, k^2)} \exp^{\leq D}\left(\sum_{j = p_*}^{p} \binom{p}{j} \|\bT^{(j)}\|_{\inj} \, n^{p - j} \|\bbz\|^{j}\right) \\
      &= \Ex_{X \sim \chi^2_{\Pear}(n, k^2)} \exp^{\leq D}\left(\sum_{j = p_*}^{p} \binom{p}{j} \frac{1}{k^j} \|\bT^{(j)}\|_{\inj} \, n^{p - j/2} X^{j/2}\right).
    \end{align*}
\end{corollary}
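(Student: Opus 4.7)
The plan is to assemble the bound from three already-proven components: Lemma~\ref{lem:cadv}, Proposition~\ref{prop:marginal-expansion}, and the definition of the injective norm, together with the elementary monotonicity properties of $\exp^{\leq D}$ collected in Proposition~\ref{prop:exp-trunc-bound}. From Lemma~\ref{lem:cadv} we begin with $\CAdv_{\leq D}(\QQ,\PP)^2 \leq \Ex_{\bz \sim \Mult(n, k^2)} \exp^{\leq D}(\langle \bT, \bz^{\otimes p}\rangle)$. The first step is to apply Proposition~\ref{prop:marginal-expansion} to expand the overlap as $\langle \bT, \bz^{\otimes p}\rangle = \sum_{j = p_*}^{p} \binom{p}{j} n^{p - j} \langle \bT^{(j)}, \bbz^{\otimes j}\rangle$, with $\bbz = \bz - (n/k^2)\one$.

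Next I would pass to absolute values termwise using the definition of the injective norm: since $\bbz / \|\bbz\|$ is a unit vector, $|\langle \bT^{(j)}, \bbz^{\otimes j}\rangle| \leq \|\bT^{(j)}\|_{\inj}\,\|\bbz\|^{j}$. Writing $\hat R \colonequals \sum_{j = p_*}^{p} \binom{p}{j} \|\bT^{(j)}\|_{\inj}\, n^{p - j}\|\bbz\|^{j} \geq 0$, this yields $\langle \bT, \bz^{\otimes p}\rangle \leq \hat R$ pointwise. By Proposition~\ref{prop:exp-trunc-bound}, namely $\exp^{\leq D}(t) \leq \exp^{\leq D}(|t|)$ together with the monotonicity of $\exp^{\leq D}$ on $\RR_{\geq 0}$, we get $\exp^{\leq D}(\langle \bT, \bz^{\otimes p}\rangle) \leq \exp^{\leq D}(\hat R)$, and taking expectation gives the first displayed bound. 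The passage from $\CAdv^2$ to $\CAdv$ is a harmless weakening: since $\hat R \geq 0$, we have $\Ex \exp^{\leq D}(\hat R) \geq 1$, and if $A \geq 1$ then $\sqrt{A} \leq A$, so $\CAdv \leq A$ follows from $\CAdv^2 \leq A$.

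For the second line, which is an equality, I would simply substitute the defining identity $\|\bbz\|^2 = (n / k^2) X$ for $X \sim \chi^2_{\Pear}(n, k^2)$, obtaining $\|\bbz\|^{j} = (n / k^2)^{j/2} X^{j/2}$, so that $n^{p - j}\|\bbz\|^{j} = n^{p - j/2} k^{-j} X^{j/2}$, which matches the stated formula after pulling the $k^{-j}$ out of the binomial coefficient.

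I do not anticipate a significant obstacle; the substantive work is already contained in Lemma~\ref{lem:cadv} and Proposition~\ref{prop:marginal-expansion}, and this corollary is essentially a bookkeeping step. The only fine point is the termwise absolute-value step, since the individual contractions $\langle \bT^{(j)}, \bbz^{\otimes j}\rangle$ can have arbitrary signs even while one seeks a bound in terms of a non-negative quantity inside $\exp^{\leq D}$; the two parts of Proposition~\ref{prop:exp-trunc-bound} are precisely what is needed to handle this cleanly.
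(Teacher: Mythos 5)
Your proposal is correct and follows exactly the same route as the paper, which simply cites Lemma~\ref{lem:cadv}, Proposition~\ref{prop:marginal-expansion}, and Proposition~\ref{prop:exp-trunc-bound} for the inequality, and the identity $X = \frac{k^2}{n}\|\bbz\|^2$ for the change of variables; your account just fills in the bookkeeping (the termwise injective-norm bound, the monotonicity step, and the harmless passage from $\CAdv^2$ to $\CAdv$) more explicitly. One tiny slip in wording: the termwise absolute-value argument actually gives $|\langle \bT, \bz^{\otimes p}\rangle| \leq \hat R$, which is what the subsequent $\exp^{\leq D}(t) \leq \exp^{\leq D}(|t|)$ step needs, rather than merely $\langle \bT, \bz^{\otimes p}\rangle \leq \hat R$ as you wrote.
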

\begin{proof}
    The first part follows by combining Proposition~\ref{prop:exp-trunc-bound}, Proposition~\ref{prop:marginal-expansion}, and Lemma~\ref{lem:cadv}.
    The second part follows from observing that the law of $\frac{k^2}{n} \|\bar{\bz}\|^2$ is precisely $\chi^2_{\Pear}(n, k^2)$.
\end{proof}
\noindent
By way of intuition, in the final expression, note that we expect $X = \Theta(1)$ typically as $n \to \infty$.
Thus, by far the largest term in the truncated exponential will correspond to the smallest value of $j$, which is $j = p_*$.
So, this bound indeed expresses that the marginal order of the GSBM governs the low coordinate degree advantage.

Before proceeding, we also establish the following bound that will be useful throughout.
\begin{proposition}
    \label{prop:chi2-pear-infty}
    Let $X \sim \chi_{\Pear}^2(n, k^2)$.
    Then, $\|X\|_{\infty} \leq k^2n$.
\end{proposition}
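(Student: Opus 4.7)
The plan is to unwind the definition and reduce the bound to an elementary inequality about nonnegative integer vectors summing to $n$. By definition, for $\bz \sim \Mult(n, k^2)$ we have $X = \tfrac{k^2}{n}\|\bz - \EE \bz\|^2$, and $\EE\bz = \tfrac{n}{k^2}\one$, where $\one \in \RR^{k^2}$. So it suffices to show $\|\bz - \EE\bz\|^2 \le n^2$ almost surely, since this would give $X \le k^2 n$.

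To bound $\|\bz - \EE\bz\|^2$, I would expand the square and compute the cross term exactly:
\[ \|\bz - \EE\bz\|^2 = \|\bz\|^2 - 2 \langle \bz, \EE\bz\rangle + \|\EE\bz\|^2. \]
Using $\sum_i z_i = n$ (which holds almost surely for a multinomial) one gets $\langle \bz, \EE\bz\rangle = \tfrac{n}{k^2}\cdot n = \tfrac{n^2}{k^2}$ and $\|\EE\bz\|^2 = k^2 \cdot (\tfrac{n}{k^2})^2 = \tfrac{n^2}{k^2}$, so the last two terms cancel into $-\tfrac{n^2}{k^2}$.

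The remaining task is to bound $\|\bz\|^2 = \sum_i z_i^2$. Since each $z_i \ge 0$ and $\sum_i z_i = n$, the sum-of-squares is maximized by concentrating all the mass on one coordinate, giving $\|\bz\|^2 \le n^2$; equivalently, $\sum_i z_i^2 \le (\sum_i z_i)^2$ for any nonnegative vector. Combining, $\|\bz - \EE\bz\|^2 \le n^2 - \tfrac{n^2}{k^2} = n^2(1 - \tfrac{1}{k^2})$, and so $X \le k^2 n (1 - \tfrac{1}{k^2}) = k^2 n - n \le k^2 n$.

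There is no real obstacle here; the statement is essentially an instance of the trivial fact that the $\ell^2$ norm of a probability-vector-times-$n$ is bounded by its $\ell^1$ norm. The only subtlety worth flagging is the use of $\sum_i z_i = n$ holding pointwise (not merely in expectation), which is automatic from the definition of $\Mult(n, k^2)$.
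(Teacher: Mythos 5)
Your proof is correct and follows essentially the same route as the paper's: both reduce to $\sum_i z_i^2 \leq \left(\sum_i z_i\right)^2 = n^2$ using nonnegativity and $\sum_i z_i = n$, with the cross-terms handled by the variance inequality $\sum_i (z_i - \bar z)^2 \leq \sum_i z_i^2$ (which the paper invokes directly as ``interpret as a variance'' and you verify by expanding the square). Your version incidentally yields the slightly sharper bound $k^2 n - n$, but the content is the same.
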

\begin{proof}
    Recall that we have $X = \frac{k^2}{n}\sum_{i = 1}^{k^2} (z_i - \frac{n}{k^2})^2$ for $\bz \sim \Mult(n, k^2)$.
    Since $\sum_{i = 1}^{k^2} z_i = n$, we may interpret $\sum_{i = 1}^{k^2} (z_i - \frac{n}{k^2})^2$ as a variance, and thus bound
    \begin{align*}
      \sum_{i = 1}^{k^2} \left(z_i - \frac{n}{k^2}\right)^2
      &\leq \sum_{i = 1}^{k^2} z_i^2 \\
      &\leq \left(\sum_{i = 1}^{k^2} z_i\right)^2 \\
      &\leq n^2,
    \end{align*}
    and the result follows from substituting this into the expression for $X$.
\end{proof}

\subsection{Proof of Theorem~\ref{thm:lcdf-p3}: General Marginal Order}

\begin{proof}[Proof of Theorem~\ref{thm:lcdf-p3}]
We allow $C$ to be a constant changing from line to line, which always needs only to be taken sufficiently large depending on the parameters $p, k, \ell$ (which do not change with $n$).
Let $X_n \sim \chi^2_{\Pear}(n, k^2)$.
Using Proposition~\ref{prop:chi2-pear-infty}, we may bound crudely, for all $p_* \leq j \leq p$,
\begin{align*}
  R_{n, j}
  &\colonequals \binom{p}{j} \frac{1}{k^j} \|\bT^{(j)}_n\|_{\inj} \, n^{p - \frac{j}{2}} X_n^{\frac{j}{2}} \\
  &\leq C \|\bT^{(j)}_n\|_{\inj} \, n^{p - \frac{p_*}{2}} X_n^{\frac{p_*}{2}}, \\
  R_n
  &\colonequals \sum_{j = 2}^{p} R_{n, j} \\
  &\leq C \left(\max_{p_* \leq j \leq p} \|\bT^{(j)}_n\|_{\inj}\right) n^{p - \frac{p_*}{2}} X_n^{\frac{p_*}{2}}.
\end{align*}
Using Corollary~\ref{cor:cadv-bound}, we have
\begin{align*}
  \CAdv_{\leq D(n)}(\QQ_n, \PP_n)^2
  &\leq \EE \exp^{\leq D(n)}(R_n) \\
  &= \sum_{d = 0}^{D(n)} \frac{1}{d!} \EE R_n^d \\
  &\leq \sum_{d = 0}^{D(n)} \frac{1}{d!} \left(C \left(\max_{p_* \leq j \leq p} \|\bT^{(j)}_n\|_{\inj}\right) n^{p - \frac{p_*}{2}}\right)^d \EE X_n^{\frac{p_*d}{2}}
    \intertext{and now, provided that $D(n) \leq n / C$, we have by Corollary~\ref{cor:simple-moment}}
  &\leq C \sum_{d = 0}^{D(n)} \frac{1}{d!} \left(C \left(\max_{p_* \leq j \leq p} \|\bT^{(j)}_n\|_{\inj}\right) n^{p - \frac{p_*}{2}}\right)^d d^{\frac{3}{2}} (p_*d)^{\frac{p_*d}{2}} \\
  &\leq C \sum_{d = 0}^{D(n)} \frac{1}{d!} \left(C\left(\max_{p_* \leq j \leq p} \|\bT^{(j)}_n\|_{\inj}\right) n^{p - \frac{p_*}{2}}d^{\frac{p_*}{2}}\right)^d
    \intertext{and by Proposition~\ref{prop:factorial},}
  &\leq C \sum_{d = 0}^{D(n)} \left(C\left(\max_{p_* \leq j \leq p} \|\bT^{(j)}_n\|_{\inj}\right) n^{p - \frac{p_*}{2}}d^{\frac{p_*}{2} - 1}\right)^d \\
  &\leq C \sum_{d = 0}^{\infty} \left(C\left(\max_{p_* \leq j \leq p} \|\bT^{(j)}_n\|_{\inj}\right) n^{p - \frac{p_*}{2}}D(n)^{\frac{p_*}{2} - 1}\right)^d,
\end{align*}
and the result follows since our assumption implies that the inner expression is, say, at most $1/2$ for sufficiently large $n$, whereby this series converges.
\end{proof}

\subsection{Proof of Theorem~\ref{thm:lcdf-p2}: Marginal Order 2}

\begin{proof}[Proof of Theorem~\ref{thm:lcdf-p2}]
    We will at first follow the same ideas as the previous proof, but eventually will need to be much more precise.
    Let $X_n \sim \chi^2_{\Pear}(n, k^2)$, and define
    \begin{align*}
      R_{n, j} &\colonequals \binom{p}{j} \frac{1}{k^j} \|\bT^{(j)}_n\|_{\inj} \, n^{p - \frac{j}{2}} X_n^{\frac{j}{2}} \text{ for } 2 = p_* \leq j \leq p, \\
      R_n &\colonequals \sum_{j = 2}^{p} R_{n, j}.
    \end{align*}
    Then, Corollary~\ref{cor:cadv-bound} states that
    \[ \CAdv_{\leq D(n)}(\QQ_n, \PP_n) \leq \EE \exp^{\leq D(n)}(R_n). \]
    We will use Lemma~\ref{lem:overlap}, which concerns precisely such expressions, to show that this is bounded as $n \to \infty$.

    We first establish some preliminary bounds using the assumptions of the Theorem.
    Recall that we assume, for constants $C > 0$ and $\epsilon \in (0, 1)$ and sufficiently large $n$, that
    \begin{align*}
      \|\bT^{(2)}_n\| &\leq (1 - \epsilon) \frac{k^2}{p(p - 1)} \frac{1}{n^{p - 1}}, \\
      \|\bT^{(j)}_n\|_{\inj} &\leq C\frac{1}{n^{p - 1}} \text{ for all } 3 \leq j \leq p.
    \end{align*}
    Let us slightly abuse notation and let the constant $C$ from the statement of the theorem increase from line to line in the proof, but only depending on the constant parameters $p, k, \ell, \epsilon$ from the statement of the Theorem (not on the growing parameter $n$).
    Plugging this in, we then find
    \begin{align}
      R_{n, 2} &\leq (1 - \epsilon) \frac{1}{2} X_n, \label{eq:Rn2-estimate} \\
      R_{n, j} &\leq C n^{-\frac{j}{2} + 1} X_n^{\frac{j}{2}} \text{ for all } 3 \leq j \leq p. \label{eq:Rn3-estimate}
    \end{align}

    Now, we prove a tail bound for $R_n$ in order to apply Lemma~\ref{lem:overlap}.
    By the union bound and the above observations,
    \begin{align*}
      \PP[R_n \geq t]
      &\leq \PP\left[R_{n, 2} \geq \left(1 - \frac{\epsilon}{2}\right)t\right] + \sum_{j = 3}^p \PP\left[R_{n, j} \geq \frac{\epsilon}{2p}t\right] \\
      &\leq \PP\left[X_n \geq \frac{1 - \frac{\epsilon}{2}}{1 - \epsilon} 2t\right] + \sum_{j = 3}^p \PP\left[X_n \geq \frac{1}{C} n^{1 - \frac{2}{j}} t^{\frac{2}{j}}\right]
        \intertext{and provided that we take $C$ even larger and restrict to $t \leq n / C$, we may ensure that all of the terms in the latter sum are at most the first probability, whereby}
      &\leq p \cdot \PP\left[X_n \geq \frac{1 - \frac{\epsilon}{2}}{1 - \epsilon} 2t\right] \,\,\, \text{ if } t \leq \frac{n}{C},
        \intertext{where, defining $\epsilon^{\prime} > 0$ appropriately depending on $\epsilon$, this is equivalently}
      &\leq p \cdot \PP\left[X_n \geq (1 + \epsilon^{\prime}) 2t\right] \,\,\, \text{ if } t \leq \frac{n}{C},
        \intertext{and finally, using the tail bound of Lemma~\ref{lem:pear-right-tail}, for $\delta \in (0, 1)$ to be chosen momentarily, we have}
      &\leq p \left(1 + \frac{2}{\delta}\right)^d \exp\left(-\frac{(1 - \delta)^2(1 + \epsilon^{\prime})}{1 + \frac{1 - \delta}{3} \sqrt{\frac{k^2 - 1}{n}} \sqrt{t}}t\right) \,\,\, \text{ if } t \leq \frac{n}{C}
        \intertext{where choosing $\delta$ small enough and $C$ large enough again, we have}
      &\leq C \exp\left(-\left(1 + \frac{\epsilon^{\prime}}{2}\right)t\right) \,\,\, \text{ if } t \leq \frac{n}{C}\,.
    \end{align*}

    Thus, Lemma~\ref{lem:overlap} applies with the choices
    \begin{align*}
      A(n) &\colonequals \frac{n}{C}, \\
      f(t) &\colonequals C \exp\left(-\frac{\epsilon^{\prime}}{2}t\right).
    \end{align*}
    It remains to check that the condition of Lemma~\ref{lem:overlap} relating $A(n)$ and $D(n)$ holds.
    Recall that this requires that
    \[ A(n) \geq D(n) \left(2 \vee \log\left(\frac{\|R_n\|_{\infty}}{D(n)}\right)\right), \]
    while the assumption of the Theorem guarantees us that
    \[ D(n) \leq C \frac{n}{\log n}. \]
    We must control $\|R_n\|_{\infty}$; to that end, recall that by Proposition~\ref{prop:chi2-pear-infty}, for $X_n \sim \chi^2_{\Pear}(n, k^2)$, we have $\|X_n\|_{\infty} \leq k^2 n$.
    Accordingly, taking $C$ suitably large, by the estimates in \eqref{eq:Rn2-estimate} and \eqref{eq:Rn3-estimate}, we have
    \[ \|R_n\|_{\infty} \leq Cn. \]
    Thus, for sufficiently large $n$,
    \[ D(n) \left(2 \vee \log\left(\frac{\|R_n\|_{\infty}}{D(n)}\right)\right) \leq C n \frac{\log \log n}{\log n} \leq A(n). \]
    Thus the final condition of Lemma~\ref{lem:overlap} is verified, the Lemma applies, and its result gives
    \[ \CAdv_{\leq D(n)}(\QQ_n, \PP_n) \leq \EE \exp^{\leq D(n)}(R_n) = O(1) \]
    as $n \to \infty$, completing the proof.
\end{proof}

\section{Applications}

\subsection{Characteristic Tensors for Discrete Observations}

We first make a few observations about our results over GSBMs where $\Omega$ is a finite set, which is the setting all of our applications will occur in.
\begin{definition}
    We call a GSBM \emph{discrete} if $\Omega$ is a finite set, in which case we identify $\Omega = [\ell]$.
    We call a discrete GSBM \emph{non-degenerate} if, for all $y \in [\ell]$, there exists some $\ba \in [k]^p$ such that $\mu_{\ba}(y) > 0$.
    An equivalent condition is that $\mu_{\avg}(y) > 0$ for all $y \in [\ell]$.
\end{definition}
\noindent
In words, non-degeneracy just asks that there is no $y \in [\ell]$ that occurs in observations $\bY \sim \QQ$ or $\bY \sim \PP$ with probability zero and thus can effectively be removed from the model without changing it.
This is implicitly achieved by the definition of the Radon-Nikodym derivative in our previous formulation, but for more explicit formulas below it will be convenient to make this assumption.

\begin{proposition}
    In a non-degenerate discrete GSBM, the characteristic tensor has entries
    \[ T_{(a_1, b_1), \dots, (a_p, b_p)} = \sum_{y \in [\ell]} \frac{1}{\mu_{\avg}(y)} (\mu_{(a_1, \dots, a_p)}(y) - \mu_{\avg}(y)) (\mu_{(b_1, \dots, b_p)}(y) - \mu_{\avg}(y)). \]
    Said differently, if we view $\bM(y) \in (\RR^{k})^{\otimes p}$ for each $y \in \ell$ as the tensor with $M(y)_{a_1, \dots, a_p} = \mu_{(a_1, \dots, a_p)}(y) - \mu_{\avg}(y)$, then
    \begin{equation}
        \bT = \frac{1}{p!}\sum_{y \in [\ell]} \frac{1}{\mu_{\avg}(y)} \bM(y)^{\otimes 2},
        \label{eq:char-tensor-2}
    \end{equation}
    with the caveat that we ``flatten'' pairs of dimensions in the tensor power $\bM(y)^{\otimes 2}$ (as in the Kronecker product of matrices).
\end{proposition}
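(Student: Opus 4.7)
The plan is a direct unpacking of the definition of the characteristic tensor, with non-degeneracy entering only to give an explicit pointwise formula for the Radon--Nikodym derivative. Starting from the definition
\[ T_{(a_1, b_1), \dots, (a_p, b_p)} = \frac{1}{p!} \Ex_{y \sim \mu_{\avg}}\!\left[\left(\frac{d\mu_{(a_1,\dots,a_p)}}{d\mu_{\avg}}(y) - 1\right)\!\left(\frac{d\mu_{(b_1,\dots,b_p)}}{d\mu_{\avg}}(y) - 1\right)\right], \]
I would first use non-degeneracy to write, for each $\ba \in [k]^p$ and each $y \in [\ell]$, the Radon--Nikodym derivative in the explicit pointwise form $\frac{d\mu_{\ba}}{d\mu_{\avg}}(y) = \mu_{\ba}(y)/\mu_{\avg}(y)$. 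This is the only place non-degeneracy is used: it guarantees that $\mu_{\avg}(y) > 0$ for every $y$, so that the derivative is well-defined at each point rather than merely almost everywhere.

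Next I would expand the expectation $\Ex_{y \sim \mu_{\avg}}[\,\cdot\,]$ as the finite sum $\sum_{y \in [\ell]} \mu_{\avg}(y)\,(\cdot)$, substitute the pointwise formula from the previous step, and clear $\mu_{\avg}(y)$ inside each of the two paired factors. Since
\[ \frac{d\mu_{\ba}}{d\mu_{\avg}}(y) - 1 = \frac{\mu_{\ba}(y) - \mu_{\avg}(y)}{\mu_{\avg}(y)}, \]
the product of the two paired factors contributes $\mu_{\avg}(y)^{-2} (\mu_{\ba}(y) - \mu_{\avg}(y))(\mu_{\bb}(y) - \mu_{\avg}(y))$, and one $\mu_{\avg}(y)$ cancels against the outer weight. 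After multiplying by the $\frac{1}{p!}$ prefactor one obtains exactly the claimed entrywise formula.

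For the tensor-product form \eqref{eq:char-tensor-2}, I would define $\bM(y) \in (\RR^k)^{\otimes p}$ as in the statement and recognize that the entry-level identity just derived says precisely that, for each fixed $y$, the contribution to $\bT$ is $\mu_{\avg}(y)^{-1}\,\bM(y)\otimes \bM(y)$, where the tensor product is taken with the pairs $(a_i,b_i)$ grouped into single legs. The only point that requires any care is the indexing convention: $\bT$ lives in $(\RR^{[k]\times[k]})^{\otimes p}$, not in $(\RR^k)^{\otimes 2p}$, so $\bM(y)^{\otimes 2}$ must be read via the Kronecker flattening that pairs the $i$th index of the first factor with the $i$th index of the second for each $i \in [p]$. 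This is exactly the caveat flagged in the statement, and no analytic work beyond this identification is needed.
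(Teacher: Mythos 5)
Your derivation is correct and is the only natural one: unpack the expectation as a finite sum, use non-degeneracy to write the Radon--Nikodym derivative pointwise as $\mu_{\ba}(y)/\mu_{\avg}(y)$, cancel one factor of $\mu_{\avg}(y)$ against the outer weight, and then read off the tensor form. The paper states this proposition without proof, consistent with it being a direct unpacking of the definition.

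One small discrepancy worth flagging: the entry-level display in the paper's proposition omits the $\frac{1}{p!}$ prefactor, even though both the definition of the characteristic tensor and the tensor-form identity \eqref{eq:char-tensor-2} carry it (and the worked examples in Sections~\ref{sec:sbm-warmup} and~\ref{sec:hsbm-warmup} use $\frac{1}{p!}$ in exactly this role). Your computation correctly produces the $\frac{1}{p!}$, so when you say that after multiplying by this prefactor you obtain ``exactly the claimed entrywise formula,'' that is not quite literal --- the paper's entry-level formula as printed is missing it. You should simply observe that the entry-level display is a typo and that your formula, with the $\frac{1}{p!}$, is the correct one and is consistent with \eqref{eq:char-tensor-2}.
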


Because of the above formulas, it will also be convenient to introduce a notation for the centered channel measures:
\[ \bar{\mu}_{\ba}(y) \colonequals \mu_{\ba}(y) - \mu_{\avg}(y). \]

\subsection{Graph Stochastic Block Models}

\subsubsection{Warmup: Symmetric Two Community Stochastic Block Model}
\label{sec:sbm-warmup}

As a warmup, consider the stochastic block model with two communities and interaction matrix
\begin{equation}
    \bQ = \left[\begin{array}{cc} \alpha & \beta \\ \beta & \alpha \end{array}\right]
\end{equation}
for some constants $\alpha, \beta \geq 0$.

Writing this in our GSBM form, we have $p = k = \ell = 2$ (pairwise observations, of membership in one of two communities, taking binary values), and, writing the probability measures $\mu_{\ba}$ as vectors, we have:
\begin{align*}
  \mu_{(1, 1)} = \mu_{(2, 2)} &= \left(\frac{\alpha}{n}, 1 - \frac{\alpha}{n}\right), \\
  \mu_{(1, 2)} = \mu_{(2, 1)} &= \left(\frac{\beta}{n}, 1 - \frac{\beta}{n}\right), \\
  \mu_{\avg} &= \left(\frac{\alpha + \beta}{2n}, 1 - \frac{\alpha + \beta}{2n}\right), \\
  \bmu_{(1, 1)} = \bmu_{(2, 2)} &= \left(\frac{\alpha - \beta}{2n}, -\frac{\alpha - \beta}{2n}\right), \\
  \bmu_{(1, 2)} = \bmu_{(2, 1)} &= \left(-\frac{\alpha - \beta}{2n}, \frac{\alpha - \beta}{2n}\right).
\end{align*}
Thus the characteristic matrix is:
\begin{align*}
  \bT
  &= \frac{1}{2}\left(\frac{1}{\frac{\alpha + \beta}{2n}} \left[\begin{array}{rr} \frac{\alpha - \beta}{2n} & -\frac{\alpha - \beta}{2n} \\ -\frac{\alpha - \beta}{2n} & \frac{\alpha - \beta}{2n}\end{array}\right]^{\otimes 2} + \frac{1}{1 - \frac{\alpha + \beta}{2n}} \left[\begin{array}{rr} -\frac{\alpha - \beta}{2n} & \frac{\alpha - \beta}{2n} \\ \frac{\alpha - \beta}{2n} & -\frac{\alpha - \beta}{2n}\end{array}\right]^{\otimes 2}\right) \\
  &= \frac{1}{n} \cdot \left(\frac{(\alpha - \beta)^2}{4(\alpha + \beta)} + \frac{1}{n - \frac{\alpha + \beta}{2}}\right) \cdot \left[\begin{array}{rr} 1 & -1 \\ -1 & 1\end{array}\right]^{\otimes 2},
\end{align*}
and its operator norm is
\[ \|\bT\| = \frac{1}{n} \cdot \frac{(\alpha - \beta)^2}{\alpha + \beta} + O\left(\frac{1}{n^2}\right). \]
Theorem~\ref{thm:lcdf-p2} gives that functions of coordinate degree $O(n\, / \log n)$ cannot achieve strong separation in this stochastic block model once the above quantity is smaller by a constant factor than $\frac{k^2}{2n} = \frac{2}{n}$, which coincides with the Kesten-Stigum threshold.
That is, we find that, once
\[ \frac{(\alpha - \beta)^2}{2(\alpha + \beta)} < 1, \]
the above class of functions cannot achieve strong separation.

\subsubsection{Proof of Theorem~\ref{thm:sbm}: General Stochastic Block Model}

We now advance to the general case advertised in Theorem~\ref{thm:sbm}.
The calculations in the proof will be completely analogous to but slightly more abstract than those above.
\begin{proof}[Proof of Theorem~\ref{thm:sbm}]
Suppose that, as in the setting of the Theorem, we have a general stochastic block model, for which the interaction matrix is $\bQ \in [0, 1]^{k \times k}_{\sym}$.
Now we still have $p = \ell = 2$ (pairwise binary observations) but potentially with larger $k$ (number of communities).

Again translating to our language of GSBMs, this corresponds to measures $\mu_{a,b}$ for $a, b \in [k]$ with probability masses
\begin{align*}
  \mu_{a, b} &= \left(\frac{1}{n}Q_{a,b}, 1 - \frac{1}{n}Q_{a,b}\right), \\
  \mu_{\avg} &= \left(\frac{1}{n} \frac{1}{k^2} \one^{\top}\bQ \one, 1 - \frac{1}{n} \frac{1}{k^2} \one^{\top}\bQ \one\right), \\
  \bar{\mu}_{a, b} &= \frac{1}{n} \left(Q_{a, b} - \frac{1}{k^2} \one^{\top}\bQ \one\right) \cdot [\,1,\, -1\,].
\end{align*}

Recall that we have
\[ \bQ\one = \lambda \one \]
for some $\lambda \geq 0$ (which amounts to asking that a vertex in any community has the same average degree), which implies that our model has marginal order at least 2.
In this case, taking the inner product with $\one$ on either side, we see that we must have
\[ \lambda = \frac{1}{k} \one^{\top}\bQ\one, \]
and in terms of this we can rewrite the above as
\begin{align*}
  \mu_{\avg} &= \left(\frac{1}{n} \frac{\lambda}{k}, 1 - \frac{1}{n} \frac{\lambda}{k}\right), \\
  \bar{\mu}_{a, b} &= \frac{1}{n} \left(Q_{a, b} - \frac{\lambda}{k}\right) \cdot [\,1,\, -1\,].
\end{align*}

By the same calculations as before, the characteristic matrix is then
\[ \bT = \frac{1}{2}\left(\frac{1}{\frac{1}{n} \frac{\lambda}{k}} + \frac{1}{1 - \frac{1}{n} \frac{\lambda}{k}}\right) \left(\frac{1}{n} \left(\bQ - \frac{\lambda}{k}\one\one^{\top}\right)\right)^{\otimes 2}, \]
where we note that $\bQ - \frac{\lambda}{k}\one\one^{\top}$ is just $\bQ$ with its unique largest eigenvalue (by the Perron-Frobenius theorem) removed.
Thus the operator norm is
\[ \|\bT\| = \frac{1}{n} \frac{k}{2\lambda_1(\bQ)} \max_{j \in \{2, \dots, k\}} |\lambda_j(\bQ)|^2 + O\left(\frac{1}{n^2}\right). \]
As before, Theorem~\ref{thm:lcdf-p2} gives that functions of coordinate degree $O(n\, / \log n)$ cannot achieve strong separation in this stochastic block model once the above quantity is smaller by a constant factor than $\frac{k^2}{2n}$, which coincides with the (now generalized) Kesten-Stigum threshold, i.e., once
\[ \max_{j \in \{2, \dots, k\}} |\lambda_j(\bQ)|^2 < k\lambda_1(\bQ), \]
then the above class of functions cannot achieve strong separation, as claimed.
\end{proof}

\subsection{Hypergraph Stochastic Block Model}

\subsubsection{Warmup: Symmetric Two Community Hypergraph Stochastic Block Model}
\label{sec:hsbm-warmup}

As before, let us begin with the simpler case of two symmetric communities.
Let us be explicit in this manageable setting and write the values of the various tensors involved directly.
To that end, let $\bD \in (\{0, 1\}^2)^{\otimes p}$ be the \emph{diagonal} tensor whose entries are $D_{a, \dots, a} = 1$ for $a \in \{1, 2\}$ and all other entries zero, and let $\bF \in (\{0, 1\}^2)^{\otimes p}$ be the \emph{off-diagonal} tensor, $\bF = [1, 1]^{\otimes p} - \bD$, having the opposite pattern of entries.
Very concretely, we may express
\begin{align*}
  \bD &= \be_1^{\otimes p} + \be_2^{\otimes p}, \\
  \bF &= \one^{\otimes p} - \bD \\
      &= \one^{\otimes p} - \be_1^{\otimes p} - \be_2^{\otimes p}.
\end{align*}

Then, the interaction tensor of the symmetric two community HSBM is
\begin{align*}
  \bQ = \alpha \bD + \beta \bF \in ([0, 1]^2)^{\otimes p}.
\end{align*}
For the GSBM parameters, we still have $k = \ell = 2$ (two communities and binary values of the observations), but now have general $p$ ($p$-ary observations) unlike the graph SBM.

Writing the rest of the setup in the GSBM language, we have:
\begin{align*}
  \mu_{(1, \dots, 1)} = \mu_{(2, \dots, 2)} &= \left(\frac{a}{\binom{n}{p - 1}}, 1 - \frac{a}{\binom{n}{p - 1}}\right), \\
  \mu_{\bm a} &= \left(\frac{b}{\binom{n}{p - 1}}, 1 - \frac{b}{\binom{n}{p - 1}}\right) \text{ for all } \bm a \notin\{(1, \dots, 1), (2, \dots, 2)\}, \\
  \mu_{\avg} &= \left(\frac{a + (2^{p - 1} - 1)b}{2^{p - 1}\binom{n}{p - 1}}, 1 - \frac{a + (2^{p - 1} - 1)b}{2^{p - 1}\binom{n}{p - 1}}\right), \\
  \bmu_{(1, \dots, 1)} = \bmu_{(2, \dots, 2)} &= \left(\frac{2^{p - 1} - 1}{2^{p - 1}\binom{n}{p - 1}}(a - b), -\frac{2^{p - 1} - 1}{2^{p - 1}\binom{n}{p - 1}}(a - b)\right), \\
  \bmu_{\bm a} &= \left(-\frac{1}{2^{p - 1}\binom{n}{p - 1}}(a - b), \frac{1}{2^{p - 1}\binom{n}{p - 1}}(a - b)\right) \text{ for all } \bm a \notin\{(1, \dots, 1), (2, \dots, 2)\}.
\end{align*}
The characteristic tensor is therefore
\[ \bT = \bT^{(p)} = \frac{1}{p!}\left(\frac{2^{p - 1}\binom{n}{p - 1}}{a + (2^{p - 1} - 1)b}  + \frac{1}{1 - \frac{a + (2^{p - 1} - 1)b}{2^{p - 1}\binom{n}{p - 1}}}\right) \left(\frac{a - b}{2^{p - 1}\binom{n}{p - 1}}\right)^2 \bM^{\otimes 2}, \]
where, using our previous expressions for $\bD$ and $\bF$,
\begin{align*}
  \bM
  &= (2^{p - 1} - 1)\bD - \bF \\
  &= (2^{p - 1} - 1)(\be_1^{\otimes p} + \be_2^{\otimes p}) - (\one^{\otimes p} - \be_1^{\otimes p} - \be_2^{\otimes p}) \\
  &= 2^{p - 1}(\be_1^{\otimes p} + \be_2^{\otimes p}) - \one^{\otimes p}.
\end{align*}
Note that, when $p = 2$, this is compatible with our result from the corresponding first calculation of Section~\ref{sec:sbm-warmup}.

We may now observe that the marginal order of this model is indeed 2: we have $\bM[\one, \dots, \one, \cdot\,] = \bm 0$ and therefore $\bT^{(1)} = \bm 0$, while the next-smallest marginalization is
\begin{align*}
  \bT^{(2)}
  &= \frac{1}{p!\cdot 2^{2(p - 2)}}\cdot \left(\frac{2^{p - 1}\binom{n}{p - 1}}{a + (2^{p - 1} - 1)b}  + \frac{1}{1 - \frac{a + (2^{p - 1} - 1)b}{2^{p - 1}\binom{n}{p - 1}}}\right) \left(\frac{a - b}{2^{p - 1}\binom{n}{p - 1}}\right)^2 \\
  &\hspace{2.25cm} \cdot \,\, 2^{2p - 2}\left((\be_1^{\otimes 2} + \be_2^{\otimes 2}) - \frac{1}{2}\one^{\otimes 2}\right)^{\otimes 2}
    \intertext{which, canceling a factor and rewriting as a matrix, is}
  &= \frac{4}{p!}\left(\frac{2^{p - 1}\binom{n}{p - 1}}{a + (2^{p - 1} - 1)b}  + \frac{1}{1 - \frac{a + (2^{p - 1} - 1)b}{2^{p - 1}\binom{n}{p - 1}}}\right) \left(\frac{a - b}{2^{p - 1}\binom{n}{p - 1}}\right)^2 \cdot \left(\bm I_2 - \frac{1}{2} \one_2\one_2^{\top}\right)^{\otimes 2}.
\end{align*}
Thus the bottom line is much the same as before: this remaining matrix is just the orthogonal projection matrix to the direction orthogonal to $\one_2$, and thus the operator norm of this marginalized characteristic matrix is
\begin{align*}
  \|\bT^{(2)}\|
  &= \frac{4}{p!}\left(\frac{2^{p - 1}\binom{n}{p - 1}}{a + (2^{p - 1} - 1)b}  + \frac{1}{1 - \frac{a + (2^{p - 1} - 1)b}{2^{p - 1}\binom{n}{p - 1}}}\right) \left(\frac{a - b}{2^{p - 1}\binom{n}{p - 1}}\right)^2 \\
  &= \frac{4}{p!}\cdot \frac{1}{\binom{n}{p - 1}} \cdot \frac{(a - b)^2}{2^{p - 1}(a + (2^{p - 1} - 1)b)} + O\left(\frac{1}{n^p}\right) \\
  &= \frac{1}{n^{p - 1}} \cdot \frac{4(a - b)^2}{p2^{p - 1}(a + (2^{p - 1} - 1)b)} + O\left(\frac{1}{n^p}\right).
\end{align*}
Theorem~\ref{thm:lcdf-p2} gives that functions of coordinate degree $O(n\, / \log n)$ cannot achieve strong separation in this HSBM once the above quantity is smaller by a constant factor than $\frac{k^2}{p(p - 1)n^{p - 1}} = \frac{4}{p(p - 1)n^{p - 1}}$.
In particular, it suffices to have
\[ \frac{(p - 1)(a - b)^2}{2^{p - 1}(a + (2^{p - 1} - 1)b)} < 1, \]
which is precisely the HSBM version of the Kesten-Stigum threshold.

\subsubsection{Proof of Theorem~\ref{thm:hsbm}: General Hypergraph Stochastic Block Model}

\begin{proof}[Proof of Theorem~\ref{thm:sbm}]
Suppose that, as in the setting of the Theorem, we have a general stochastic block model with a fixed symmetric interaction tensor $\bQ \in ([0, 1]^{k})^{\otimes p}$.
We are now in a setting where $\ell = 2$ (binary observations) but $p$ (arity of interactions) and $k$ (number of communities) are arbitrary.

Again translating to our language of GSBMs, this corresponds to measures $\mu_{\ba}$ for $\ba \in [k]^p$ with probability masses
\begin{align*}
  \mu_{\ba} &= \left[\,\frac{1}{\binom{n}{p - 1}}Q_{\ba},\, 1 - \frac{1}{\binom{n}{p - 1}}Q_{\ba}\,\right], \\
  \mu_{\avg} &= \left[\,\frac{1}{\binom{n}{p - 1}}\frac{1}{k^p} \langle \bQ, \one^{\otimes p} \rangle,\, 1 - \frac{1}{\binom{n}{p - 1}}\frac{1}{k^p} \langle \bQ, \one^{\otimes p} \rangle\,\right], \\
  \bar{\mu}_{\ba} &= \frac{1}{\binom{n}{p - 1}}\left(Q_{\ba} - \frac{1}{k^p} \langle \bQ, \one^{\otimes p} \rangle\right) \cdot \big[\,1,\,-1\,\big].
\end{align*}

Recall that we have assumed that
\begin{equation}
    \label{eq:1-tensor-evec}
    \bQ[\one, \dots, \one, \cdot\,] = \lambda\one.
\end{equation}
We again have
\[ \lambda = \frac{1}{k} \langle \bQ, \one^{\otimes p} \rangle, \]
whereby we can rewrite
\begin{align*}
  \mu_{\avg} &= \left(\frac{1}{\binom{n}{p - 1}} \frac{\lambda}{k^{p - 1}}, 1 - \frac{1}{\binom{n}{p - 1}} \frac{\lambda}{k^{p - 1}}\right), \\
  \bar{\mu}_{\ba} &= \frac{1}{\binom{n}{p - 1}} \left(Q_{\ba} - \frac{\lambda}{k^{p - 1}}\right) \cdot \big[\,1,\,-1\,\big]
\end{align*}

The characteristic tensor is then
\[ \bT^{(p)} = \frac{1}{p!}\left(\frac{1}{\frac{1}{\binom{n}{p - 1}} \frac{\lambda}{k^{p - 1}}} + \frac{1}{1 - \frac{1}{\binom{n}{p - 1}} \frac{\lambda}{k^{p - 1}}}\right)\frac{1}{\binom{n}{p - 1}^2}\left(\bQ - \frac{\lambda}{k^{p - 1}}\one^{\otimes p}\right)^{\otimes 2}. \]
We see that this model will indeed have marginal order at least 2, as the marginalized characteristic tensor $\bT^{(1)}$ (a 1-tensor, or vector) will be zero by our assumption \eqref{eq:1-tensor-evec}, while the next marginalization will be
\[ \bT^{(2)} = \frac{1}{p! \cdot k^{2(p - 2)}}\left(\frac{1}{\frac{1}{\binom{n}{p - 1}} \frac{\lambda}{k^{p - 1}}} + \frac{1}{1 - \frac{1}{\binom{n}{p - 1}} \frac{\lambda}{k^{p - 1}}}\right)\frac{1}{\binom{n}{p - 1}^2}\left(\bQ[\one, \dots, \one, \cdot, \cdot] - \frac{\lambda}{k}\one^{\otimes 2}\right)^{\otimes 2}. \]

Note now that $\bQ[\one, \dots, \one, \cdot, \cdot]$ is a non-negative symmetric matrix, and by our assumption \eqref{eq:1-tensor-evec}, its Perron-Frobenius eigenvalue is $\lambda$ with eigenvector $\one$.
Thus the matrix appearing above (as a 2-tensor) is just the matrix $\bQ[\one, \dots, \one, \cdot, \cdot]$ with its top eigenvector removed.
The operator norm of $\bT^{(2)}$ is therefore
\begin{align*}
  \|\bT^{(2)}\|
  &= \frac{1}{p! \cdot k^{p - 3} \cdot \binom{n}{p - 1}\cdot \lambda} \max_{j = 2, \dots, k} |\lambda_j(\bQ[\one, \dots, \one, \cdot, \cdot])|^2 + O\left(\frac{1}{n^p}\right) \\
  &= \frac{1}{n^{p - 1}} \cdot \frac{1}{p \cdot k^{p - 3}\cdot \lambda} \max_{j = 2, \dots, k} |\lambda_j(\bQ[\one, \dots, \one, \cdot, \cdot])|^2 + O\left(\frac{1}{n^p}\right)
\end{align*}
As before, Theorem~\ref{thm:lcdf-p2} gives that functions of coordinate degree $O(n\, / \log n)$ cannot achieve strong separation in this stochastic block model once the above quantity is smaller by a constant factor than $\frac{k^2}{p(p - 1)n}$, which coincides with the (now generalized) Kesten-Stigum threshold, i.e., once
\[ \max_{j \in \{2, \dots, k\}} |\lambda_j(\bQ[\one, \dots, \one, \cdot, \cdot])|^2 < \frac{k^{p - 1}}{p - 1} \lambda_1(\bQ[\one, \dots, \one, \cdot, \cdot]), \]
then the above class of functions cannot achieve strong separation.
\end{proof}

\subsubsection{Higher Marginal Order Example: Random XOR-SAT}
\label{sec:xor}

As an example of an interesting model with marginal order higher than 2, let us describe how a lower bound matching several prior works on the random $p$-XOR-SAT problem can be seen in our framework.
Our first task is to embed a version of random $p$-XOR-SAT into a GSBM.
Recall that a $p$-XOR-SAT instance is equivalently a system of parity equations of the form $x_{i_1} \cdots x_{i_p} = b$ with $b \in \{\pm 1\}$, to be solved over $\bx \in \{\pm 1\}^n$.
Each such equation is sometimes called a \emph{clause}, and we will follow standard notation in writing $m$ for the number of these clauses.
We consider hypothesis testing problems that ask to distinguish between uniformly random such instances (in a sense we will clarify in a moment) and ones with a planted structure causing unusually many clauses to be satisfiable at once.

Consider a GSBM of order $p$ with $\Omega = \{+1, -1, \bullet\}$, where $\bullet$ stands for a clause that is not included in an instance.
We set $k = 2$, but identify the labels not with $[2]$ but again with $\{\pm 1\}$.
We then define, for $\eta \in (0, 1)$,
\[ \mu_{(a_1, \dots, a_p)} \colonequals \eta \delta_{a_1 \cdots a_p} + (1 - \eta) \delta_{\bullet}, \]
which will yield the average
\[ \mu_{\avg} = \eta \Unif(\{\pm 1\}) + (1 - \eta) \delta_{\bullet}. \]
A sample from the null model $\QQ$ of this GSBM will then be, in effect, a $p$-XOR-SAT instance where every clause is present with probability $\eta$ independently and the right-hand sides are drawn uniformly at random from $\{\pm 1\}$.
A sample from the planted model $\PP$ will be an instance where every clause is again present with probability $\eta$ independently, but there is a uniformly random satisfying assignment $\bx$ chosen that determines the right-hand sides so that it is satisfying.

Our discussion of lower bounds will actually apply to this exactly satisfiable model; as has been widely discussed, the noiseless p-XOR-SAT problem is solvable by Gaussian elimination, but many frameworks for understanding algorithmic hardness more generally only address robust algorithms and neglect this ``brittle'' algebraic algorithm.
See, e.g., discussion in \cite{KWB-2022-LowDegreeNotes}).
One may also add a small amount of noise by resampling (in the sense of Definition~\ref{def:resampling}) with a small probability.
In fact, our construction above is just the $(1 - \eta)$-censorship (in the sense of Definition~\ref{def:censorship}) of the GSBM that reveals all clauses.
By our discussion in Section~\ref{sec:channel-calc}, rates of censorship and resampling have the same effect on our low-degree calculations, so a small amount of further resampling noise will effectively just change $\eta$ slightly, and we may just as well repeat all calculations after that operation.
We continue calculating with the exactly satisfiable model above for the purposes of this discussion.

We compute the centered likelihood ratios
\[ \frac{d\mu_{(a_1, \dots, a_p)}}{d\mu_{\avg}}(y) - 1 = \left\{\begin{array}{ll} 0 & \text{if } y = \bullet, \\ (-1)^{\One\{a_1 \cdots a_p = y\}} & \text{if } y \in \{\pm 1\} \end{array}\right\}, \]
from which we compute the entries of the characteristic tensor, which end up with a simple form:
\begin{align*}
  T_{(a_1, b_1), \dots, (a_p, b_p)}
  &= \Ex_{y \sim \mu_{\avg}}\left[\left(\frac{d\mu_{(a_1, \dots, a_p)}}{d\mu_{\avg}}(y) - 1 \right)\left(\frac{d\mu_{(b_1, \dots, b_p)}}{d\mu_{\avg}}(y) - 1 \right)\right] \\
  &= \eta \Ex_{y \sim \Unif(\{\pm 1\})} (-1)^{\One\{a_1 \cdots a_p = y\} + \One\{b_1 \cdots b_p = y\}} \\
  &= \eta \cdot (-1)^{\One\{a_1 \cdots a_p = b_1 \cdots b_p\}} \\
  &= \eta \cdot a_1b_1 \cdots a_pb_p,
\end{align*}
where we use that we have chosen to work with $a_i, b_j \in \{\pm 1\}$.
Note that, up to an appropriate permutation of all axes, this means that
\[ \bT = \eta \left[\begin{array}{r} +1 \\ -1 \\ -1 \\ +1 \end{array}\right]^{\otimes p}. \]
In particular then, every marginalization $\bT^{(j)}$ for $j < p$ will be zero (since the vector above is orthogonal to $\one$), and the marginal order of this model is
\[ p_* = p. \]

Thus, we may apply Theorem~\ref{thm:lcdf-p3} and find that, given a sequence of degrees $D(n)$, no LCDF of degree $D(n)$ will achieve strong separation provided that
\[ \eta = \eta(n) = O\left(n^{-\frac{p}{2}}D(n)^{-\frac{p - 2}{2}}\right). \]
Since the number of clauses (in expectation and typically within small error) in such random formulas is about $m = \eta \cdot \binom{n}{p} \sim \eta \cdot n^p$, this condition is equivalent to having the number of clauses scale as
\[ m = m(n) = O\left(n^{\frac{p}{2}}D(n)^{-\frac{p - 2}{2}}\right). \]
In the regime $D(n) = n^{\delta}$, this lower bound precisely complements (under the heuristic that LCDF of degree $D(n)$ correspond to $\exp(\widetilde{\Theta}(D(n)))$-time algorithms) the scaling of the runtime of at least two families of subexponential time algorithms for this problem (or its variant with a small amount of noise): the spectral and sum-of-squares algorithms of \cite{RRS-2017-StronglyRefutingCSP}, and the simpler spectral algorithms based on the Kikuchi hierarchy of \cite{WEAM-2019-KikuchiTensorPCA}.

\subsection{Group Problems}

Recall that for the problems over finite groups $G$ that we study, we write $k \colonequals |G|$, which coincides with the parameter $k$ of the GSBMs we consider.

\subsubsection{Proof of Theorem~\ref{thm:sync}: Truth-or-Haar Synchronization}

In the truth-or-Haar model of group synchronization, we choose $n$ elements of $G$ independently and uniformly at random, say $g_1, \dots, g_n$, and for each $i < j$ we observe $y_{ij}$ which is $g_i g_j^{-1}$ with probability $\eta = \eta(n)$ and a uniformly random element of $G$ with probability $1 - \eta$ (hence the name, since the Haar measure on $G$ is just the uniform measure).
We will ultimately be interested in the scaling $\eta(n) = \gamma / \sqrt{n}$ for a constant $\gamma$, but for convenience for now work with the $\eta$ parameter instead.
Let us write $\mu_{\avg}$ for the uniform measure on $G$ (which will indeed be $\mu_{\avg}$ in the GSBM setting as well, as we will see momentarily).

\begin{proof}[Proof of Theorem~\ref{thm:sync}]
Writing this as a GSBM, the measures involved are indexed by pairs $g, h \in G$, and we have
\[ \mu_{(g, h)} = \eta \delta_{g h^{-1}} + (1 - \eta) \mu_{\avg} \]
by the above definition.
We see that $\mu_{\avg}$ is indeed the average of the $\mu_{(g, h)}$.
We note also that this model is \emph{not} strongly symmetric for most $G$, since $gh^{-1} \neq hg^{-1} = (gh^{-1})^{-1}$ unless every element of $G$ has order 1 or 2.
However, from the calculations below we will see that the characteristic matrix is symmetric and thus that the model is weakly symmetric, so our tools still apply.

\begin{remark}
    We see also that in fact this model can be obtained as the $(1 - \eta)$-resampling (in the sense of Definition~\ref{def:resampling}) of the ``noiseless'' GSBM where we start out with simply observing the exact group element differences, $\mu_{(g, h)} = \delta_{gh^{-1}}$.
\end{remark}

We then also have
\[ \bar{\mu}_{(g, h)} = \eta(\delta_{g h^{-1}} - \mu_{\avg}). \]
Let $\bP_g \in \{0, 1\}^{k \times k}$ be the permutation matrix associated to the permutation of $g$ acting by multiplication on the left on $G$ (i.e., the image of $G$ under the left regular representation of $G$).
The characteristic matrix is then
\begin{align*}
  \bT
  &= \frac{\eta^2}{2} \sum_{g \in G}k \left(\bP_g - \frac{1}{k}\one_k\one_k^{\top}\right)^{\otimes 2} \\
  &= \frac{\eta^2}{2}\Bigg(k \underbrace{\sum_{g \in G} \bP_g^{\otimes 2}}_{\equalscolon \bM} - \one_{k^2}\one_{k^2}^{\top},\Bigg)
\end{align*}
where we expand the tensor product and use that $\sum_{g \in G} \bP_g = \one_k\one_k^{\top}$.
From this expression it follows that $\bT$ is a symmetric matrix and thus that the GSBM is weakly symmetric, since $(\bP_g)^{\top} = \bP_{g^{-1}}$ so transposition merely permutes the terms in the summation above.

We must understand the eigenvalues of this matrix.
Consider the entries of the matrix $\bM$: it is indexed by pairs $(g, h) \in G^2$, and we have
\begin{align*}
  (\bP_f^{\otimes 2})_{(g_1, h_1), (g_2, h_2)} &= \One\{ g_1g_2^{-1} = h_1h_2^{-1} = f\}, \\
  \bM_{(g_1, h_1), (g_2, h_2)} &= \One\{ g_1g_2^{-1} = h_1h_2^{-1}\} \\ &= \One\{ g_1^{-1}h_1 = g_2^{-1}h_2 \}.
\end{align*}
But, this latter expression means that $\bM$, after suitably permuting the rows and columns, is just the block matrix $\bm I_k \otimes \one_k\one_k^{\top}$, whose diagonal $k \times k$ blocks are the all-ones matrix and whose other blocks are zero.
$\one_{k^2}$ is an eigenvector of this matrix with eigenvalue $k$, but this eigenspace has dimension $k > 1$.
Therefore, we have
\[ \|\bT\| = \frac{k^2 \eta^2}{2}. \]

Theorem~\ref{thm:lcdf-p2} gives that functions of coordinate degree $O(n\, / \log n)$ cannot achieve strong separation in this GSBM once the above quantity is smaller by a constant factor than $\frac{k^2}{2n}$.
Thus, if we have
\[ \eta = \eta(n) < \frac{1 - \epsilon}{\sqrt{n}}, \]
i.e. if $\gamma < 1$, then we obtain the stated lower bound.
\end{proof}

\subsubsection{Proof of Theorem~\ref{thm:sumset}: Truth-or-Haar Sumset}

\begin{proof}[Proof of Theorem~\ref{thm:sumset}]
This model is identical to the synchronization model, but instead of observing $gh^{-1}$ or a uniformly random element, we observe $gh$ or a uniformly random element.
In other words, our GSBM is now specified by channel measures
\[ \mu_{(g, h)} = \eta \delta_{g h} + (1 - \eta) \mu_{\avg} \]
Recall that we further assume in this case that $G$ is abelian.

At a glance, this model might seem quite different from the previous one.
Indeed, the intermediate calculations are different: we may carry out the same plan, but instead of working with the permutation matrices $\bP_f$ with entries
\[ (\bP_f)_{gh} = \One\{gh^{-1} = f\}, \]
we must work with $\widetilde{\bP}_f$ with entries
\[ (\widetilde{\bP}_f)_{gh} = \One\{gh = f\}. \]
These are still permutations and still have $\sum_{g \in G} \widetilde{\bP}_g = \one_k\one_k^{\top}$, so the first part of the argument remains unaffected, and we reach the characteristic matrix
\[ \bT = \frac{\eta^2}{2}\Bigg(k \underbrace{\sum_{g \in G} \widetilde{\bP}_g^{\otimes 2}}_{\equalscolon \widetilde{\bM}} - \one_{k^2}\one_{k^2}^{\top},\Bigg). \]

In understanding $\widetilde{\bM}$, we must use that now we assume $G$ is abelian: without that assumption, we could only reach
\begin{align*}
  (\widetilde{\bP}_f^{\otimes 2})_{(g_1, h_1), (g_2, h_2)}
  &= \One\{ g_1g_2 = h_1h_2 = f\}, \\
  \widetilde{\bM}_{(g_1, h_1), (g_2, h_2)}
  &= \One\{ g_1g_2 = h_1h_2\} \\
  &= \One\{ g_1^{-1}h_1 = g_2h_2^{-1} \}
    \intertext{but assuming that $G$ is abelian we may continue}
  &= \One\{ g_1^{-1}h_1 = h_2^{-1}g_2 \} \\
  &= \One\{ g_1^{-1}h_1 = (g_2^{-1}h_2)^{-1} \}.
\end{align*}
Thus $\widetilde{\bM}$ is the same as $\bM$ from the group synchronization case, but with an involution applied to the columns.
After permuting the rows and columns such that this involution reverses the order of the columns, $\widetilde{\bM}$ is then $\widetilde{\bm I}_k \otimes \one_k\one_k^{\top}$, where $\widetilde{\bm I}_k$ is the $k \times k$ ``anti-identity'' matrix:
\[ \widetilde{\bm I}_k = \left[\begin{array}{ccccc} & & & & 1 \\ & & & 1 & \\ & & \rotatebox{76}{$\ddots$} & & \\ & 1 & & & \\ 1 & & & & \end{array}\right]. \]
As this matrix is symmetric, so is $\widetilde{\bM}$, and we see that $\bT$ is symmetric also and so this model is weakly symmetric.
But, this would not necessarily be the case if $G$ were not abelian (like for small non-abelian groups including the symmetric group $\Sym([3])$).

It is easily verified that $\widetilde{\bm I}_k$ has the eigenvalue 1 with multiplicity $\lceil k / 2 \rceil$, and the remaining eigenvalues are all $-1$.
Thus, we still always have
\[ \|\bT\| = \frac{k^2\eta^2}{2}, \]
and the remainder of the argument goes through as for synchronization to derive the (identical) lower bound against LCDF.
\end{proof}

\subsubsection{Gaussian Multi-Frequency Synchronization}
\label{sec:kbk}

We briefly summarize how our technical results in Section~\ref{sec:pearson} imply an improvement to the recent results of \cite{KBK-2024-LowDegreeGaussianSynchronization} on a related synchronization model.
For the sake of explanation let us focus on cyclic groups, though the improvement also holds for their results on general finite groups and also the circle group $U(1)$ (which is analyzed by reducing to cyclic groups).
That work studies the following model for the cyclic groups.
We write $\mathsf{GUE}(n)$ for the law of a random Hermitian matrix $\bW \in \CC^{n \times n}$ whose entries on and above the diagonal are independent, with diagonal entries distributed as $\sN(0, 2)$ and off-diagonal entries having real and imaginary parts distributed independently as $\sN(0, 1/2)$.
\begin{definition}[Multi-frequency $\ZZ_k$ synchronization]\label{def:angular_synch_ZL}
    Let $k \geq 2$ and $\lambda \ge 0$.
    We consider two models of $\ceil{k/2} - 1$ observations of $n \times n$ Hermitian matrices:
    \begin{enumerate}
    \item Under $\QQ_n$, observe $\bY_{\ell} \sim \mathsf{GUE}(n)$ for $\ell = 1, \dots, \ceil{k/2} - 1$.
    \item Under $\PP_n$, draw $\bx \in \CC^n$ a random vector such that each entry is sampled independently as $x_j \sim \Unif(\{\omega^0, \dots, \omega^{k-1}\})$, where $\omega = \exp(2\pi i / k)$ is a primitive root of unity.
        Also, draw $\bW_{\ell} \sim \mathsf{GUE}(n)$ for $\ell = 1, \dots, \ceil{k/2} - 1$.
        Then, observe
        \begin{equation*}%\label{eq:gaussian_synch_unitary}
        \bY_\ell = \frac{\lambda}{n} \bx^{(\ell)} \bx^{(\ell)*} + \frac{1}{\sqrt{n}} \bW_\ell, \text{ for } \ell = 1, \ldots \ceil{k/2} - 1,
    \end{equation*}
    where $\bx^{(\ell)}$ denotes the $\ell$th entrywise power.
    \end{enumerate}
\end{definition}

The general conjecture concerning this model, first proposed by \cite{PWBM-2016-PCASpikedMatrixSynchronization}, is that testing should be computationally hard whenever $\lambda < 1$ (the same as the computational threshold for a single such spiked matrix observation, meaning that, perhaps surprisingly, the presence of extra observations is not helpful).
In \cite{KBK-2024-LowDegreeGaussianSynchronization}, it is shown (in our terminology) that no sequence of polynomials of degree $o(n^{1/3})$ can strongly separate $\QQ_n$ from $\PP_n$.
We improve on this scaling (and, via Corollary~\ref{cor:simple-moment} and our work in Section~\ref{sec:pearson}, give a more conceptual argument for the technical details) as follows:

\begin{theorem}
    For any $\lambda < 1$, there exists $\gamma = \gamma(\lambda) > 0$ such that no polynomial of degree at most $\gamma n$ can strongly separate $\QQ_n$ from $\PP_n$ in the model of Definition~\ref{def:angular_synch_ZL}.
\end{theorem}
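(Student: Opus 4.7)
The plan is to follow the template of Theorem~\ref{thm:lcdf-p2}, identifying the low-degree overlap as a constant multiple of a Pearson $\chi^2$ statistic of dimension $k$ and invoking Lemma~\ref{lem:overlap}. Compared to \cite{KBK-2024-LowDegreeGaussianSynchronization}, the genuinely new input is the sharp moment bound of Corollary~\ref{cor:simple-moment}, which permits the argument to reach degree linear in $n$ instead of $n^{1/3}$.

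First, from the Gaussian-channel second-moment calculation carried out in \cite{KBK-2024-LowDegreeGaussianSynchronization}, we have for independent draws $\bx, \by$ of the latent signal,
\[ \CAdv_{\leq D}(\PP_n, \QQ_n)^2 \leq C\, \EE_{\bx, \by} \exp^{\leq D}(R_n), \qquad R_n \colonequals \sum_{\ell=1}^{\lceil k/2 \rceil - 1} \frac{\lambda^2}{n}\left|\langle \bx^{(\ell)}, \by^{(\ell)}\rangle\right|^2, \]
where $C = O(1)$ absorbs additive constants coming from the diagonal of the GUE noise. Writing $x_j = \omega^{a_j}$, $y_j = \omega^{b_j}$ and $z_d \colonequals \#\{j : a_j - b_j \equiv d \pmod k\}$, we have $\bz \sim \Mult(n, k)$, and $\hat z_\ell \colonequals \langle \bx^{(\ell)}, \by^{(\ell)}\rangle = \sum_d z_d \omega^{\ell d}$ is the $\ell$th discrete Fourier coefficient of $\bz$ on $\ZZ_k$. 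Parseval combined with the reality-induced symmetry $|\hat z_\ell| = |\hat z_{k-\ell}|$ shows that the restricted sum on $1 \leq \ell \leq \lceil k/2 \rceil - 1$ captures at most half of $\sum_{\ell=1}^{k-1}|\hat z_\ell|^2 = k\|\bbz\|^2$, giving
\[ R_n \leq \frac{\lambda^2}{2}\cdot\frac{k}{n}\|\bbz\|^2 = \frac{\lambda^2}{2}\, X_n, \qquad X_n \sim \chi^2_{\Pear}(n, k). \]

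With this bound in hand, the proof of Theorem~\ref{thm:lcdf-p2} now applies essentially verbatim. Since $\lambda^2 / 2 < 1/2$, Lemma~\ref{lem:pear-right-tail} with a small free parameter yields $\Px[R_n \geq t] \leq C\exp(-(1 + \delta)t)$ for some $\delta = \delta(\lambda) > 0$, valid on the range $t \leq c_0 n$; the proof of Proposition~\ref{prop:chi2-pear-infty} adapts unchanged to give $\|R_n\|_\infty = O(n)$. For $\gamma > 0$ sufficiently small and $D(n) \leq \gamma n$ we then have $\log(\|R_n\|_\infty / D(n)) = O(1)$, so the hypothesis of Lemma~\ref{lem:overlap} is satisfied with $A(n) = c_0 n$. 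The Lemma concludes $\EE \exp^{\leq \gamma n}(R_n) = O(1)$, hence $\CAdv_{\leq \gamma n}(\PP_n, \QQ_n) = O(1)$, and Proposition~\ref{prop:cadv-sep} rules out strong separation by polynomials of degree at most $\gamma n$ (polynomials of degree $D$ being LCDF of coordinate degree at most $D$).

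The only step where one must be careful is the moment control of $X_n$ for $r$ linear in $n$, which is exactly the quantity Lemma~\ref{lem:overlap} ultimately reduces to. The sharp vector Bernstein inequality of Lemma~\ref{lem:vec-bern} is the crux: weaker bounds using $\Tr(\Cov(\bv_i))$ rather than $\|\Cov(\bv_i)\|$ (which is what symmetrization plus Khintchine or naive Doob-martingale arguments yield) lose a factor of $k$ in the variance proxy and give moments that blow up past $r \gtrsim n^{1/3}$, which was the bottleneck in \cite{KBK-2024-LowDegreeGaussianSynchronization}. The even-$k$ case, where the self-dual frequency $\ell = k/2$ would contribute a separate real term $|\hat z_{k/2}|^2$, causes no issue since we only need an upper bound on $R_n$ and discarding this non-negative term only improves the bound.
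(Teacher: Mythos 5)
Your argument is correct modulo one notational/attributional slip, and it reaches the same conclusion as the paper by a somewhat different route.

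The slip: you write the bound as $\CAdv_{\leq D}(\PP_n, \QQ_n)^2 \leq C\,\EE\exp^{\leq D}(R_n)$, attributing it to the second-moment computation of \cite{KBK-2024-LowDegreeGaussianSynchronization}. That reference (and Equation (6.4) thereof, which the paper invokes) gives an exact formula for the \emph{polynomial} advantage $\Adv_{\leq D}$, not the coordinate advantage $\CAdv_{\leq D}$. The Gaussian multi-frequency model is not a GSBM, so Lemma~\ref{lem:cadv}---the paper's only source of $\CAdv$ bounds---does not apply, and no bound on $\CAdv$ for this model is established anywhere in the paper or in \cite{KBK-2024-LowDegreeGaussianSynchronization}. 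Fortunately the theorem is only about polynomials, so the fix is cosmetic: replace $\CAdv$ by $\Adv$ throughout (the paper does exactly this, introducing $\Adv_{\leq D}$ as a separate object in the proof), and drop the final remark about polynomials sitting inside LCDF, since you no longer need it.

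As for the route: the paper uses the \emph{exact} identity $\Adv_{\leq D}^2 = \EE\exp^{\leq D}\bigl(\tfrac{\lambda^2}{2}X_n\bigr)$ for $X_n \sim \chi^2_{\Pear}(n,k)$, expands the truncated exponential into moments, and bounds each moment by Corollary~\ref{cor:simple-moment}, landing on a convergent geometric series after choosing $\epsilon$ so that $\lambda^2 \cdot \tfrac{2+\epsilon}{2} \leq 1-\epsilon$. You instead upper-bound $R_n \leq \tfrac{\lambda^2}{2} X_n$ via Parseval (which is fine---for odd $k$ it is an equality, and you correctly note that dropping the $\ell = k/2$ term only helps), and then run the tail decomposition of Lemma~\ref{lem:overlap} as in the proof of Theorem~\ref{thm:lcdf-p2}, with the local tail bound supplied by Lemma~\ref{lem:pear-right-tail}. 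Both packagings rest on the same crux, the sharp vector Bernstein inequality of Lemma~\ref{lem:vec-bern}, which you correctly identify as the source of the improvement from $D(n) \sim n^{1/3}$ to $D(n) \sim n$. The paper's moment route is shorter and more self-contained because the exact identity from \cite{KBK-2024-LowDegreeGaussianSynchronization} is available; your tail route has the virtue of hewing to the template used for all the other $p_*=2$ applications and of requiring only a one-sided overlap bound rather than the exact formula. Either is a valid proof.
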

\begin{proof}
    We will work with the polynomial advantage, denoted $\Adv_{\leq D}$ and given by the same expression as in our Definition~\ref{def:cadv} of the coordinate advantage, but optimizing over $\deg(f) \leq D$ for $f$ a polynomial rather than $\cdeg(f) \leq D$:
    \[ \Adv_{\leq D}(\PP, \QQ) \colonequals \left\{\begin{array}{ll} \text{maximize} & \Ex_{\by \sim \PP} f(\by) \\ \text{subject to} & \Ex_{\by \sim \QQ} f(\by)^2 \leq 1, \\ & f \in \RR[\by], \\ & \deg(f) \leq D \end{array}\right\}. \]
    The analog of Proposition~\ref{prop:cadv-sep} holds for the polynomial advantage and strong separation by polynomials, so that to prove the Theorem it suffices to show that $\Adv_{\leq D(n)}(\PP_n, \QQ_n) = O(1)$ whenever $D(n) \leq \gamma n$.

    Let us choose $\epsilon = \epsilon(\lambda) > 0$ such that $\lambda^2 \cdot \frac{2 + \epsilon}{2} \leq 1 - \epsilon$.
    We compute starting from Equation~(6.4) of \cite{KBK-2024-LowDegreeGaussianSynchronization}, which expresses the advantage in terms of expectations over multinomial vectors.
    In fact, we may relate the expectations appearing there to Pearson's $\chi^2$ statistics:
    \begin{align*}
      \Adv_{\leq D(n)}(\PP_n, \QQ_n)^2
      &= \sum_{d = 0}^{D(n)} \frac{1}{d!} \frac{\lambda^{2d}}{n^d} \Ex_{\bz \sim \Mult(n, k)}\left(\frac{k}{2}\sum_{\ell = 1}^k \left(z_{\ell} - \frac{n}{k}\right)^2\right)^d \\
      &= \sum_{d = 0}^{D(n)} \frac{1}{d!} \frac{\lambda^{2d}}{2^d} \Ex_{X \sim \chi^2_{\Pear}(n, k)}X^d
        \intertext{and now by Corollary~\ref{cor:simple-moment}, there exist $C, \gamma > 0$ depending on our $\epsilon$ chosen above (and thus in turn depending only on $\lambda$) such that, if $D(n) \leq \gamma n$, then}
      &\leq C^k \sum_{d = 0}^{D(n)} \frac{1}{d!} \frac{\lambda^{2d}}{2^d} d^{3/2} \left(\frac{(2 + \epsilon)d}{e}\right)^d \\
      &\leq C^k \sum_{d = 0}^{D(n)} \frac{1}{d!} d^{3/2} \left(\frac{(1 - \epsilon)d}{e}\right)^d
      \intertext{and bounding the factorial by Proposition~\ref{prop:factorial} we have}
      &\leq C^k \sum_{d = 0}^{\infty} d^{3/2} (1 - \epsilon)^d
    \end{align*}
    which converges, completing the proof.
\end{proof}

\section*{Acknowledgments}
\addcontentsline{toc}{section}{Acknowledgments}

Thanks to Afonso Bandeira and Anastasia Kireeva for helpful discussions about group synchronization, to Yuzhou Gu for discussions, references, and suggestions about hypergraph stochastic block models and their Kesten-Stigum thresholds, and to Yuval Ishai for suggesting (a different version of) the noisy group sumset problem.
Part of this work was done while the author was employed by the Department of Computer Science of Yale University.
Inspiration for parts of this work was also drawn from discussions at the Workshop on Computational Complexity of Statistical Inference at the Banff International Research Station in February 2024.

\phantomsection
\bibliographystyle{alpha}
\addcontentsline{toc}{section}{References}
\bibliography{main}

\end{document}